\newcommand{\del}[1]{\frac{\partial}{\partial #1}}
\newcommand{\indel}[1]{\partial/\partial #1}
\newtheorem{theorem}{Theorem}
\newtheorem{lemma}[theorem]{Lemma}
\newtheorem{proposition}[theorem]{Proposition}
\newtheorem{corollary}[theorem]{Corollary}
\newtheorem*{mainthm}{Main Theorem}
\theoremstyle{remark}
\newtheorem{remark}[theorem]{Remark}
\newtheorem{example}[theorem]{Example}
\newtheorem{definition}[theorem]{Definition}
\title[Vector fields with single-valued solutions]{Meromorphic vector fields with single-valued solutions on complex surfaces}
\author{Adolfo Guillot}\address{Instituto de Matem\'aticas, Unidad Cuernavaca, Universidad Nacional Aut\'onoma de M\'exico,  A.P.~273-3 Admon.~3, Cuernavaca, Morelos, 62251, Mexico} \email{adolfo.guillot@im.unam.mx}
\thanks{The author thanks the \'Ecole Normale Sup\'erieure (Paris) for its hospitality during the  sabbatical leave where a first version of this work was finished. He thanks PASPA-DGAPA-UNAM (Mexico) for its support during this leave. The author was partially funded by the PAPIIT-UNAM IN108214 and  PAPIIT-UNAM IN102518 funds.}
\thanks {Keywords: Differential equation in the complex domain, entire solution, single-valued solution, Malmquist's Theorem, Riccati equation}
\thanks{MSC 2010: 34M05 34M45 37F75}
\thanks{Published as: Adolfo Guillot, Meromorphic vector fields with single-valued solutions on complex surfaces, \emph{Adv. Math.} \textbf{354} (2019) 106742}
\begin{document}
\begin{abstract} We study ordinary differential equations in the complex domain given by meromorphic vector fields on compact complex K\"ahler  surfaces. We prove that if such an equation has a maximal single-valued solution (in particular, an entire one) whose image is not contained in a proper analytic subset then, up to a bimeromorphic transformation, either the vector field is holomorphic  or it preserves a fibration. 
\end{abstract}

\maketitle

\section{Introduction}

For algebraic differential equations in the complex domain, the existence of special kinds of solutions often imposes very restrictive conditions on the equations themselves. An instance of this is given by Malmquist's centennial result~\cite[Th\'eor\`eme~1]{malmquist}:
\begin{theorem}[Malmquist, 1913] Let~$R(w,t)$ be a rational function of~$w$ and~$t$ with complex coefficients. Let~$w$ be a transcendental meromorphic function defined in~$\mathbf{C}$ that is a solution to the differential equation
	$w'=R(w,t).$ Then 
	$R(t)=A(t)w^2+B(t)w+C(t)$, with~$A$, $B$ and~$C$ rational functions (the equation is a Riccati one).
\end{theorem}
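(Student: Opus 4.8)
The plan is to argue by value-distribution theory, comparing the Nevanlinna characteristic of the solution~$w$ with that of its derivative. Write $R=P/Q$ with $P,Q$ coprime polynomials in~$w$ whose coefficients are rational in~$t$, and set $p=\deg_w P$, $q=\deg_w Q$, $d=\max(p,q)$; the goal is to force $q=0$ and $p\le 2$. First I would recall two standard facts about a transcendental meromorphic~$w$: by the lemma on the logarithmic derivative $m(r,w')\le m(r,w)+S(r,w)$, while the poles of~$w'$ sit over those of~$w$ with their order raised by one, so $N(r,w')\le 2N(r,w)$; together these give $T(r,w')\le 2\,T(r,w)+S(r,w)$. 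On the other hand, since the coefficients of~$R$ are rational they are small functions relative to~$w$, so the Valiron--Mokhon'ko theorem applied to $w'=R(w,t)$ yields $T(r,w')=d\,T(r,w)+S(r,w)$. Comparing the two expressions and using $T(r,w)\to\infty$ forces $d\le 2$.

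This growth comparison is the engine, but it only bounds the degree; it does not by itself kill the denominator, which is the real content. To remove~$Q$, I would pass to the local behaviour of the equation, where single-valuedness enters. Suppose $w$ has a pole at some~$t_0$, of order~$m$, chosen away from the finite set where the leading coefficients of~$P$ or~$Q$ degenerate. Matching the Laurent expansions of the two sides of $w'=R(w,t)$ — the left side has a pole of order $m+1$, while $R\sim c(t_0)\,w^{\,p-q}$ gives a pole of order $m(p-q)$ — forces $p>q$, then $m(p-q-1)=1$, hence $m=1$ and $p-q=2$. Combined with $d\le 2$ this gives exactly $p=2$, $q=0$: the equation is the Riccati equation $w'=A(t)w^2+B(t)w+C(t)$.

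It remains to treat the case where~$w$ is entire. Then $N(r,w)=0$, the growth comparison improves to $T(r,w')\le T(r,w)+S(r,w)$, and the same argument now gives $d\le 1$, so $R=(Aw+B)/(Cw+D)$. If $C\equiv 0$ this is linear in~$w$, a degenerate Riccati equation and we are done; the remaining subcase $q=1$ is where I expect the main obstacle. A pure value-distribution argument is powerless here — an entire function may omit one finite value, so the deficiency relation is not violated — and one must instead exploit single-valuedness directly: integrating $w'=(Aw+B)/(Cw+D)$ produces a logarithmic (or, if $A\equiv 0$, an algebraic) term in the implicit relation between~$t$ and~$w$, so the solution carries a movable branch point and cannot be single-valued on~$\mathbf{C}$, a contradiction. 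The delicate points throughout are making the Valiron--Mokhon'ko input and the local matching uniform in~$t$ (handling the finitely many degenerate fibres and the moving targets), and turning the heuristic ``movable branch point'' obstruction into a rigorous statement for non-constant coefficients.
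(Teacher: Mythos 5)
The paper does not actually prove this statement: it is quoted as Malmquist's classical theorem, with pointers to Malmquist's original paper, Yosida's Nevanlinna-theoretic proof and Pan--Sebastiani's geometric one. Your proposal follows the Yosida line, and its first two stages are correct: comparing $T(r,w')\le 2T(r,w)+S(r,w)$ with the Valiron--Mokhon'ko identity $T(r,R(w,\cdot))=d\,T(r,w)+S(r,w)$ gives $d\le 2$, and the Laurent matching at a pole of $w$ sitting over a non-degenerate value of $t$ gives $m(p-q-1)=1$, hence $m=1$, $p-q=2$ and, with $d\le2$, $p=2$, $q=0$. (One small repair: the dichotomy should be ``infinitely many poles'' versus ``finitely many poles'', not ``has a pole'' versus ``entire'', since finitely many poles could all sit over degenerate fibres; but then $N(r,w)=O(\log r)=S(r,w)$ and your second branch applies verbatim, so this is cosmetic.)

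The genuine gap is the residual case $w'=(Aw+B)/(Cw+D)$ with $C\not\equiv0$, $AD-BC\not\equiv0$, and the mechanism you propose for it does not work. The very computation you would use to exhibit a movable branch point proves the opposite: if $C(t_1)w(t_1)+D(t_1)=0$ while $A(t_1)w(t_1)+B(t_1)\ne0$, then in the cleared equation $w'\,(Cw+D)=Aw+B$ the left side vanishes at $t_1$ and the right side does not; so the \emph{given} single-valued solution never meets the locus $Cw+D=0$ outside the finite zero set of the resultant. The solution avoids the branching locus, and no contradiction with single-valuedness can be extracted that way --- ``the general solution is branched'' is a statement about the Painlev\'e property of the equation, not about the one solution you are handed, and separation of variables is unavailable for non-constant coefficients. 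What the avoidance \emph{does} give is that $h=Cw+D$ has finitely many zeros and poles, hence $h=Se^{g}$ with $S$ rational and $g$ entire non-constant; substituting this into the derived identity $hh'=\alpha h^{2}+\beta h+\gamma$ (with $\alpha=C'/C$, $\beta=A+D'-C'D/C$, $\gamma=-(AD-BC)\not\equiv0$ rational) yields a relation $c_{2}e^{2g}+c_{1}e^{g}+c_{0}=0$ in which $c_{0}\not\equiv0$ and $T(r,c_{i})=S(r,e^{g})$, which Borel's theorem (or a direct comparison of characteristics) forbids. Some argument of this kind, rather than the branch-point heuristic, is what is needed to close this case; as written, the proof is incomplete there.
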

Through the years, this theorem has known many proofs, from the early one by Yosida using Nevalinna theory~\cite{yosida} to the relatively recent  geometric one by Pan and Sebastiani~\cite{pan-sebastiani}. Generalizations of the theorem range from results valid for more general first-order nonautonomous equations (including some by Malmquist himself~\cite{malmquist2}) to some valid for higher order nonautonomous equations (although, in general, their conclusions are less categorical than that of Malmquist's original theorem). They go from fully algebraic to only partially algebraic ones. We refer the reader to~\cite[ch.~10]{laine} and~\cite{eremenko} for an overview of some of these generalizations.

Our main result is a generalization of Malmquist's theorem valid for differential equations given by meromorphic vector fields on compact complex K\"ahler surfaces (which are, in a way, autonomous, second-order, algebraic systems). Our result considers not only solutions given by meromorphic functions defined in the whole complex plane but, more generally, single-valued ones:

\begin{mainthm} Let~$S$ be a compact complex K\"ahler surface, $X$ a meromorphic vector field on~$S$, $P\subset S$ the curve of poles of~$X$, $\Omega\subset\mathbf{C}$ an open subset and~$\phi:\Omega\to S\setminus P$ a univalent maximal solution of the restriction of~$X$ to~$S\setminus P$ whose image is not contained in any proper analytic subset of~$S$. Up to a bimeromorphic transformation, either
	\begin{itemize}
		\item $X$ is holomorphic, or
		\item $S$ fibers over a rational or elliptic curve, with rational or elliptic fibers, and~$X$ preserves the fibration (in particular, each irreducible component of~$P$ is contained in a fiber).
	\end{itemize}
\end{mainthm}
In this statement, a solution~$\phi:\Omega\to S\setminus P$ is said to be \emph{univalent maximal}  if it has no analytic continuation as a map from~$\mathbf{C}$ into~$S\setminus P$ beyond~$\Omega$. This notion formalizes that of ``single valued solution'' and will be made precise in definition~\ref{maxsol}.  From the classification of holomorphic vector fields on compact K\"ahler surfaces (see proposition~\ref{vfkah} for a precise statement), it follows that if such a vector field does not preserve a fibration, it is a holomorphic vector field on a special Abelian surface (one without elliptic subgroups). In consequence, in the statement of the Main Theorem, we may replace the possibility ``$X$ is holomorphic'' by~``$X$ is a holomorphic vector field on an   Abelian surface without elliptic subgroups''. 

One can interpret the fibered case as the existence of a partial separation of variables: there is one variable that is integrated independently. For example, for the rational fibrations, we obtain, according to the nature of the vector field on the base, Riccati equations over rational, trigonometric,  or elliptic function fields.

Our Main Theorem generalizes Theorem~B of Rebelo and the author in~\cite{guillot-rebelo}, that states that the same conclusion may be obtained under the hypothesis that there is a univalent maximal solution through  every  point of~$S\setminus P$ (this is, that the vector field is \emph{semicomplete}). A related result is given by the tandem~\cite{brunella-complete}, \cite{bustinduy-giraldo}. In the first,
Brunella studied complete polynomial vector fields on~$\mathbf{C}^2$, giving a complete list of normal forms up to polynomial automorphisms, after proving that such a vector field must either have a first integral or preserve a fibration. In the second, Bustinduy and Giraldo, taking further Brunella's approach, prove that if a polynomial vector field on~$\mathbf{C}^2$ has one entire transcendental solution, it is actually complete. These results imply that if a polynomial vector field on~$\mathbf{C}^2$ has one entire transcendental solution, it preserves a fibration.

The structure of the surfaces and vector fields in the Main Theorem imply the existence of a threshold for the number of univalent maximal solutions that a vector field may have:
\begin{corollary}\label{treshold} Let~$X$ be a meromorphic vector field on a compact complex K\"ahler surface having five univalent maximal  solutions with different images such that the image of each one of them is not contained in a proper analytic subset. Then all the maximal solutions are univalent.
\end{corollary}
(The proof will be given in section~\ref{sec:excom}.) A vector field having exactly four univalent maximal solutions appears in example~\ref{four}. The Main Theorem also implies that, in our context, the domains where univalent maximal solutions are defined are very special:
\begin{corollary}Let~$S$ be a compact K\"ahler surface, $X$ a meromorphic vector field on~$S$. Let~$\Omega\subset\mathbf{C}$ and~$\phi:\Omega\to S$ a univalent maximal solution. Then~$\mathbf{C}\setminus\Omega$ is countable and, in particular, $\overline{\Omega}=\mathbf{C}$. 
\end{corollary}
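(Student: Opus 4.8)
The plan is to leverage the Main Theorem's dichotomy to reduce the claim to the two structured cases it provides, and in each case exhibit why the closure of the domain of a maximal solution must be all of $\mathbf{C}$. The corollary asserts $\overline{\Omega}=\mathbf{C}$ for \emph{every} maximal solution, so I must first dispose of solutions whose image is not Zariski-dense (to which the Main Theorem does not directly apply) and then handle the Zariski-dense ones through the structure theorem. For the non-Zariski-dense case, the image is contained in a curve $C\subset S$; restricting $X$ to (the normalization of) this invariant curve produces a one-dimensional meromorphic vector field, and I would invoke the classical fact that a maximal single-valued solution of a meromorphic vector field on a compact Riemann surface has dense domain—this is essentially the one-variable content underlying Malmquist's theorem, where the solution, being a single-valued function to a curve, extends across all but a discrete set of removable or polar singularities.

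For the Zariski-dense case I would apply the Main Theorem to fix one of its two normal forms, noting that $\overline{\Omega}=\mathbf{C}$ is a bimeromorphic invariant of the notion of maximal solution (since a birational map is defined off a proper analytic subset and pulls back the domain up to a set that does not affect its closure). First I would treat the holomorphic case: if $X$ is a holomorphic vector field on a compact surface, then its local flow is holomorphic and the solution $\Phi:\Omega\to S$ extends holomorphically across any point of $\partial\Omega$ where it stays in the domain of definition of the flow, so maximality forces $\Omega$ to be obtained from $\mathbf{C}$ by deleting only the discrete set of times at which the trajectory hits the pole divisor (here empty) or escapes to a genuine obstruction; in the Abelian-surface subcase the flow is in fact complete, giving $\Omega=\mathbf{C}$ outright. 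Second I would treat the fibered case: composing with the fibration $S\to B$ projects $\Phi$ to a solution on the base curve $B$ (rational or elliptic) of the induced vector field, which again has dense domain by the one-dimensional argument; the fiber direction contributes, along the good fibers, a Riccati-type equation whose single-valued solutions likewise have dense domains, and combining the two shows the missing set of $\Phi$ is the preimage of a discrete set, hence discrete, so $\overline{\Omega}=\mathbf{C}$.

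The key steps, in order, are: (i) reduce to maximal solutions that are Zariski-dense by handling invariant-curve solutions via the one-dimensional theory; (ii) apply the Main Theorem and record that $\overline{\Omega}=\mathbf{C}$ is bimeromorphically invariant; (iii) in the holomorphic/Abelian case conclude from (near-)completeness of the flow that the complement of $\Omega$ in $\mathbf{C}$ is discrete; and (iv) in the fibered case combine the density of the base solution's domain with the Riccati structure along fibers to show the complement of $\Omega$ is again discrete. In every case the conclusion $\overline{\Omega}=\mathbf{C}$ follows because a discrete (or, more generally, measure-zero closed) complement in $\mathbf{C}$ cannot obstruct density.

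The main obstacle I anticipate is case (iv): controlling how the finitely many bad fibers (those over the poles or critical values of the induced vector field on $B$, and those where the Riccati coefficients degenerate) interact with the domain $\Omega$. A maximal solution may approach a bad fiber in finite time, and one must verify that the set of times mapping into bad fibers is discrete in $\mathbf{C}$ rather than accumulating to a set with nonempty interior in the boundary; this requires understanding the asymptotics of trajectories as they approach the pole divisor and ruling out that $\Omega$ is, say, a half-plane or a disk. I expect the precise Definition~\ref{maxsol} of maximality to be exactly what rules out such pathological domains, since it is phrased in terms of analytic continuation from $\mathbf{C}$, and the single-valuedness of the solution together with the fibered structure should force the obstruction set to be discrete.
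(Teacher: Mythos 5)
Your overall architecture is the one the paper intends (the paper gives no written proof of this corollary, remarking only that it is the direct analogue of Corollary~C of~\cite{guillot-rebelo}): split off the solutions whose image is not Zariski-dense and treat them on the invariant curve via Lemma~\ref{fewstat}, invoke the Main Theorem for the Zariski-dense ones, note that the conclusion is a bimeromorphic invariant, and handle the holomorphic and fibered alternatives separately. Steps (i)--(iii) are fine. The problem is step (iv), which you correctly identify as the crux and then leave open: you assert that ``the missing set of $\Phi$ is the preimage of a discrete set, hence discrete,'' but discreteness is never established, and your last paragraph explicitly defers it. As written the argument is also slightly circular there: ``the base solution has dense domain by the one-dimensional argument'' presupposes that $\Pi\circ\phi$ is a maximal solution of the induced field on~$B$, which is not automatic from Definition~\ref{maxsol}, and density of the base domain alone does not yet yield discreteness of the obstruction set.

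The gap closes with an input already present in the paper. In the fibered alternative the poles of~$X$ lie in finitely many fibers $F_1,\dots,F_k$, so $X$ projects to a vector field $Y=\Pi_*X$ on~$B$ that is holomorphic and nowhere zero off the finite set $\{b_j=\Pi(F_j)\}$; the proof of the Main Theorem shows, via Lemma~\ref{fewstat}, that $Y$ extends holomorphically to~$B$ with at most two zeros, hence is complete, so $\Pi\circ\phi$ is the restriction to~$\Omega$ of a nonconstant \emph{entire} solution $\tilde\psi:\mathbf{C}\to B$ (not merely one with dense domain). Definition~\ref{maxsol} then does the work in a precise form: for any $t_0\in\partial\Omega$ and any sequence $t_i\to t_0$ in~$\Omega$, the points $\phi(t_i)$ accumulate only on the pole locus, hence on $\bigcup_j F_j$, so $\tilde\psi(t_0)=\lim_i\Pi(\phi(t_i))\in\{b_1,\dots,b_k\}$. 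Therefore $\partial\Omega\subseteq\tilde\psi^{-1}(\{b_1,\dots,b_k\})$, the preimage of a finite set under a nonconstant entire curve, which is discrete; and a nonempty open subset of~$\mathbf{C}$ with discrete boundary is dense, since otherwise its complement would contain a nonempty open set whose boundary is discrete yet has no isolated points. With this in place the fiberwise Riccati analysis you invoke becomes unnecessary for the corollary: over a compact subset of $B\setminus\{b_j\}$ the fibers are compact and $X$ is holomorphic, so no further obstruction to continuation can occur. The same scheme, with $Y$ replaced by the complete holomorphic model and the special fibers by the exceptional locus of the birational map, disposes of the birationally-holomorphic case.
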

Accordingly, it is not until dimension three that we may witness the existence of natural boundaries for functions satisfying algebraic differential equations (and we indeed do, like in the classical equations of Halphen, Chazy and Ramanujan~\cite{guillot-sl2}). These natural boundaries are inevitably accompanied by rich dynamics:
\begin{corollary}Let $S$ be a compact complex algebraic threefold, $X$ a meromorphic vector field on~$S$, $\phi:\Omega\to S$ a univalent maximal solution of~$X$ such that~$\mathbf{C}\setminus\Omega$ is uncountable (for instance, such that~$\overline{\Omega}\neq\mathbf{C}$). Then~$\phi(\Omega)$ is Zariski-dense.\end{corollary}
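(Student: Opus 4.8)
The plan is to argue by contraposition: I will show that if $\phi(\Omega)$ fails to be Zariski dense, then necessarily $\overline{\Omega}=\mathbf{C}$. So suppose $\phi(\Omega)$ is not Zariski dense and let $V\subset M$ be the Zariski closure of $\phi(\Omega)$, a proper algebraic subvariety. A constant solution is defined on all of $\mathbf{C}$, so under the hypothesis $\overline{\Omega}\neq\mathbf{C}$ the solution $\phi$ is nonconstant and $\dim V\ge 1$; moreover $\phi(\Omega)\subset V$ is Zariski dense in $V$, so $\dim V\in\{1,2\}$. Since $\phi$ is an integral curve of $X$ lying in $V$, at every point where $\phi$ meets the smooth locus of $V$ away from the poles one has $X(\phi(t))=\phi'(t)\in T_{\phi(t)}V$; as tangency to $V$ is a closed condition and $\phi(\Omega)$ is Zariski dense in $V$, the field $X$ is tangent to $V$, and $X|_V$ is a meromorphic vector field on $V$.

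First I would treat the case $\dim V=2$. Let $\pi\colon\tilde V\to V$ be a resolution of singularities, an isomorphism over the smooth locus $V^{\mathrm{sm}}$. Because $\phi(\Omega)$ is an irreducible analytic curve Zariski dense in the surface $V$, the set $\phi^{-1}(V^{\mathrm{sing}})$ is discrete, so $\phi$ lifts over $\Omega\smallsetminus\phi^{-1}(V^{\mathrm{sing}})$, and the lift extends across the remaining isolated points by the properness of $\pi$ together with Riemann's extension theorem; this yields a holomorphic $\tilde\phi\colon\Omega\to\tilde V$ with $\pi\circ\tilde\phi=\phi$. The field $\tilde X:=\pi^*(X|_V)$ is meromorphic on $\tilde V$ and $\tilde\phi$ is one of its integral curves. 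Since $M$ is algebraic, $V$ and $\tilde V$ are projective, so $\tilde V$ is a compact K\"ahler surface. I claim $\tilde\phi$ is a maximal solution of $\tilde X$: if it extended to a solution on a domain $\Omega'\supsetneq\Omega$, then composing with the proper morphism $\tilde V\to V\hookrightarrow M$ would produce an analytic continuation of $\phi$ into $M$ beyond $\Omega$, contradicting the maximality of $\phi$. The corollary asserting that on a compact K\"ahler surface the closure of the domain of every maximal solution equals $\mathbf{C}$ then gives $\overline{\Omega}=\mathbf{C}$, as desired. For $\dim V=1$ I would run the same argument after replacing $V$ by its normalization $C$ (a compact Riemann surface) and embedding the situation in the projective surface $C\times\mathbf{P}^1$, extending the vector field by zero in the $\mathbf{P}^1$-direction, so that $t\mapsto(\tilde\phi(t),0)$ is a maximal solution to which the surface corollary again applies; alternatively, the one-dimensional case can be disposed of directly.

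The main obstacle I anticipate is the faithful transfer of maximality under these birational reductions: one must check that lifting $\phi$ through the resolution (respectively the normalization) is possible in spite of the isolated points where the curve $\phi(\Omega)$ meets the singular locus, and that the vector field remains meromorphic after restriction to $V$ and pullback, so that $\tilde\phi$ is genuinely a solution on a compact K\"ahler surface. The hypothesis that $M$ is algebraic, rather than merely compact K\"ahler, enters precisely here: it guarantees that $V$ and its desingularization are projective, hence K\"ahler, which is what the surface corollary requires. Granting these points, the contradiction with $\overline{\Omega}\neq\mathbf{C}$ forces $\phi(\Omega)$ to be Zariski dense.
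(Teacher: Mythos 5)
Your overall strategy---contraposition, passing to the Zariski closure $V$, resolving, and invoking the surface corollary---is reasonable, but the step where you transfer maximality contains a genuine gap, and it is exactly the delicate point. First, the pole locus of $\tilde X=\pi^*(X|_V)$ on $\tilde V$ can be strictly smaller than the preimage of the pole locus of $X$: for instance $X=(y/x)\,\partial/\partial y+\partial/\partial z$ is tangent to $V=\{y=0\}$ and restricts there to the holomorphic field $\partial/\partial z$, although $X$ has poles along $\{x=0\}$, which meets $V$. Consequently a boundary sequence $t_i\to t_\infty\notin\Omega$ along which $\phi(t_i)$ accumulates on the poles of $X$ (as maximality of $\phi$ requires) may have $\tilde\phi(t_i)$ converging to a point where $\tilde X$ is holomorphic, so $\tilde\phi$ need not be maximal. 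Second, your intended contradiction conflates extension \emph{as a map into $M$} with extension \emph{as a solution of $X$ on the locus where $X$ is holomorphic}: Definition~\ref{maxsol} only forbids the latter, and the paper itself observes, at the end of the proof of Proposition~\ref{dichotomy}, that a maximal solution may well extend as a map across isolated points of $\mathbf{C}\setminus\Omega$ lying over the polar divisor. Third, even granting that $\tilde\phi$ continues, its full continuation as a solution of $\tilde X$ could a priori be multivalued, in which case there is no maximal solution of $\tilde X$ defined on a domain of $\mathbf{C}$ to which the surface corollary applies. These points can be repaired (the continuation can only cross $\pi^{-1}(\mathrm{poles}(X))\setminus\mathrm{poles}(\tilde X)$, a proper analytic subset not containing the Zariski-dense leaf, hence at a discrete set of times), but the repair essentially amounts to redoing the dichotomy argument.

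Indeed, the statement follows at once from Proposition~\ref{dichotomy}, with no resolution of singularities and no appeal to the Main Theorem or to the surface corollary: if $\phi(\Omega)$ were contained in a proper algebraic subvariety $V$, then $\overline{\phi(\Omega)}\subset V$ would be contained in a proper analytic subset of a neighborhood of each of its points, so the second alternative of the dichotomy is impossible; the first alternative then gives either $\Omega=\mathbf{C}$ or an entire extension $\overline{\phi}:\mathbf{C}\to M$, and in the latter case $\mathbf{C}\setminus\Omega\subset\overline{\phi}^{\,-1}(\mathrm{poles}(X))$ is discrete. Either way $\overline{\Omega}=\mathbf{C}$, contradicting the hypothesis.
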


(Particular proofs of this result for Halphen's equations may be found in~\cite{macstr} and in~\cite[Thm.~A]{guillot-sl2}.) The last two corollaries are direct analogues of Corollaries~C and~D from~\cite{guillot-rebelo} in the present setting.

In the case where the ambient surface is algebraic, the proof of the Main Theorem is naturally split into two situations. One of them is when there is a maximal  and univalent solution~$\phi:\Omega\to S$ that cannot be extended to an entire mapping~$\widehat{\phi}:\mathbf{C}\to S$. In this situation, in section~\ref{sec:combi}  we will adapt (and at some points just refer to) the techniques of Rebelo and the author's article~\cite{guillot-rebelo}. The idea is to show that such a solution accumulates the locus of poles of~$X$ in a complicated way, and that this imposes severe restrictions on the nature of this locus. In the algebraic setting, these restrictions imply the existence of the fibration. In the other situation, the one where all the univalent maximal  solutions are (or can be extended to) entire maps~$\widehat{\phi}:\mathbf{C}\to S$, in section~\ref{sec:entire} we will use McQuillan's theorem on  holomorphic foliations on algebraic surfaces admitting an entire invariant curve (theorem~\ref{mq}) to obtain global information about the foliation, which  is then used together with some properties of the univalent maximal solutions to conclude. Despite the similarity of the conclusions of  McQuillan's theorem and of our main one, our result does not follow from McQuillan's  in a straightforward way (see example~\ref{ex:notriccati1}). McQuillan's result is an important ingredient in Brunella's classification of complete polynomial vector fields on~$\mathbf{C}^2$~\cite{brunella-complete}; it will play an analogous role in this work.

When the ambient surface is K\"ahler but not algebraic, we will deduce the Main Theorem from Brunella's classification of holomorphic foliations on these surfaces (section~\ref{sec:nonalg}).

We assume that the reader is familiar with general (mainly local) facts about foliations on surfaces, like those in~\cite[ch.~1]{brunella-birational}. For definitions related to the single-valuedness of solutions of complex differential equations, we refer the reader to~\cite[section~2]{guillot-rebelo} as well as to the next section.

\section{Preliminaries}
We briefly present some results related to the univalence of solutions of complex vector fields. A more detailed introduction may be found in sections~2, 3 and~4 of~\cite{guillot-rebelo}.

\subsection{Univalent maximal solutions} Let~$X$ be a holomorphic vector field on the complex manifold~$M$, let~$p\in M$. By Cauchy's theorem on the existence of solutions of ordinary differential equations, for every~$p\in M$ there exists an open set~$U\subset\mathbf{C}$, $0\in U$, and a holomorphic map~$\phi:(U,0)\to (M,p)$ that is a \emph{solution} of~$X$ in the sense that for every~$t\in U$, $\phi'(t)=X|_{\phi(t)}$. Solutions as these, defined in subsets of~$\mathbf{C}$, are said to be \emph{univalent}. By considering the maximal domain where the analytic continuation of~$\phi$ is defined (as a map from~$\mathbf{C}$ into~$M$) we obtain a function defined in some domain that spreads over~$\mathbf{C}$ but that is not, in general, a subset of~$\mathbf{C}$. (For example, the vector field~$e^z\indel{z}$ on~$\mathbf{C}$ has the multivalued solution~$t\mapsto -\log(1-t)$, defined in the Riemann surface of the logarithm.) These solutions are called \emph{maximal} and exist for each initial condition. In special settings, a maximal solution may also be univalent. The following definition formalizes the notion of ``single-valued solution'':
\begin{definition}\label{maxsol} Let~$X$ be a holomorphic vector field on the complex manifold~$M$. Let~$\Omega\subset\mathbf{C}$. A solution $\phi:\Omega\to M$ of~$X$ is said to be \emph{univalent maximal} if for every sequence~$\{t_i\}\subset\Omega$, converging in~$\mathbf{C}$ but not in~$\Omega$, the sequence~$\{\phi(t_i)\}$ escapes from every compact subset of~$M$ or, equivalently,  if the function~$\mathbf{I}\times \phi:\Omega\to\mathbf{C}\times M$ given by~$t\mapsto (t,\phi(t))$ is a proper one.  If~$X$ is a meromorphic vector field on the complex manifold~$M$, a \emph{univalent maximal solution} of~$X$ is a univalent maximal  solution of the restriction of~$X$ to the open subset where it is holomorphic.
\end{definition}
Thus, a solution is univalent if it is defined in some subset of~$\mathbf{C}$ and maximal if it has no analytic continuation beyond the domain where it is defined. If~$\phi:\mathbf{C}\to M$ is a solution of~$X$, it is automatically a univalent maximal one.   If all the maximal solutions are univalent the vector field is said to be \emph{semicomplete} (following Rebelo~\cite[def.~2.3]{rebelo-sing}) or simply \emph{univalent} (following Palais~\cite[def.~VI, p.~62]{palais}).

For a holomorphic vector field~$X$ on a curve, the \emph{time form} of~$X$ is the one-form~$\omega$ such that~$\omega(X)\equiv 1$. An orbit~$L$ of a holomorphic vector field may be parametrized by a univalent maximal  solution if every curve~$\gamma:[0,1]\to L$ for which $\int_\gamma \omega=0$ is closed~\cite[prop.~2.7]{rebelo-sing}.

\begin{lemma}\label{fewstat} If a vector field on a curve has~$n$ zeros (counted with multiplicity), its solutions have at least~$\lceil n/2\rceil$ determinations. If a meromorphic vector field on a curve admits a univalent maximal solution,  it has no poles (is in fact holomorphic) and has at most two zeros (counted with multiplicity).
\end{lemma}
\begin{proof} Let~$X$ be  a holomorphic vector field on a curve, $p$ a point  where it has a zero of multiplicity~$k+1$ ($k\geq 1$). In a suitable local coordinate, the vector field reads~$z^{k+1}/(1+\alpha z^k)\indel{z}$ for a unique~$\alpha\in\mathbf{C}$~\cite[thm~4.24]{ilyak}.  For the time form~$\omega$ of~$X$, $\alpha$ is the residue of~$\omega$ at~$p$. The inverse of the solution of~$X$ is,  a priori, multivalued, and given by
	\begin{equation}\label{integ-vf-w-zeros}f(z)=\int^{z}\omega =\int^z \left(\frac{\alpha}{w}+\frac{1}{w^{k+1}}\right)dw=\alpha\log(z)-\frac{1}{kz^k}+c.\end{equation}
	Suppose that~$\alpha\neq 0$ and let~$\eta(x)=e^{x/\alpha}$. The function~$\eta\circ f$ is no longer multivalued. It has an essential singularity at~$z=0$ and, by Picard's Great Theorem, its restriction to an arbitrarily small neighborhood~$U$ of~$0$ attains most values infinitely many times. Let~$q\in\mathbf{C}$ be a value attained infinitely many times under~$\eta\circ f$. The other elements of~$\eta^{-1}(q)$ are obtained by adding multiples of~$2i\pi\alpha$ to a given one, and are simultaneously attained by the same points that map to~$q$ under~$\eta\circ f$: for~$w$ in~$\eta^{-1}(q)$, the set~$f^{-1}(w)$ is independent of~$w$. Since~$q$ is attained infinitely many times under~$\eta\circ f$, every point in~$\eta^{-1}(q)$ is attained infinitely many times under~$f$. Thus, if~$\alpha\ne 0$, $f$ attains most values infinitely many times and its inverse, the solution of~$X$, has infinitely many determinations, and the lemma follows. Let us thus suppose that~$\alpha=0$. In this case, by~(\ref{integ-vf-w-zeros}), $f$ maps a neighborhood  of~$p$ to a neighborhood of the point at infinity in a $k$-fold way: the solution of~$X$, the inverse of~$f$, has~$k$ determinations. In the Riemann surface where it is defined, the solution of~$X$ maps a sector of angle~$2\pi k\geq \pi (k+1)$ at~$\infty$  to a neighborhood of~$p$. If~$X$ has multiplicity~$1$ at~$p$, it may be written as~$\lambda z\indel{z}$ in a suitable coordinate, and the solution is given by~$t\mapsto \exp(\lambda t)$: the preimage of any neighborhood of~$p$ contains, for some~$N$, the set~$\{t|\Re(\lambda t)<N\}$, containing a sector of angle~$\pi$ at infinity.  Thus, if~$\omega$ has vanishing residues at the multiple zeros of~$X$ then, in the Riemann surface of a maximal solution,  the sum of the angles of the sectors at~$\infty$ mapping to neighborhoods of the zeros of~$X$ is at least~$\pi n$. Under the projection of these sectors onto~$\mathbf{C}$, there is one point covered at least~$\lceil n/2\rceil$ times ($\lceil x\rceil$ denotes the \emph{ceiling} function of~$x$, the smallest integer greater or equal than~$x$). This proves the first part of the statement. 
	
	For the second part, notice that the first one implies that if a vector field admits a univalent maximal solution, it has at most two zeros (counted with multiplicity) and that, moreover, at any double zero the vector field is, in a suitable coordinate, given by~$z^2\indel{z}$ (compare with~\cite[section~3]{rebelo-sing}). Finally, the solutions of strictly meromorphic vector fields on curves cannot be single-valued:  a strictly meromorphic vector field having a pole of order~$n-1$ (for~$n>1$) may be written as~$\frac{1}{n}z^{1-n}\indel{z}$ (the time form is holomorphic, and its only invariant is the order of its zero). This vector field has the solution~$\sqrt[n]{t}$, which is multivalued (see also~\cite[lemma~2]{guillot-rebelo}). \end{proof}

\begin{remark}\label{rem:imp} Despite the fact that a vector field having a pole of order~$n-1$ has a solution with at least~$n$ determinations, the first part of the above lemma does not have an analogue involving the poles of a vector field. Let us give two examples in order to illustrate this. For the first, let~$\Sigma$ be a hyperelliptic curve of genus~$g$ and~$\pi:\Sigma\to\mathbf{P}^1$ the quotient by the hyperelliptic involution.  The pullback by~$\pi$ of a generic vector field with a double zero on~$\mathbf{P}^1$ is a meromorphic vector field on~$\Sigma$ having two double zeros and~$2(g+1)$ simple poles (at the ramification points), but whose solutions have only two determinations. For the second, consider the ramified double cover~$\Sigma$ of~$\mathbf{C}$ along~$\mathbf{Z}$ with the vector field induced by~$\indel{z}$. It has infinitely many poles, but its solutions have only two determinations.
\end{remark}

\begin{proposition}\label{vf-essential} If a meromorphic vector field on a curve has an isolated essential singularity, its solutions have infinitely many determinations.
\end{proposition}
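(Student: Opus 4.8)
The plan is to localise near the singularity and to read the multivaluedness of the solution off its time map. After a coordinate change we may assume the essential singularity is at~$0$ and that $X=f(z)\,\indel{z}$, with $f$ meromorphic on a punctured disc $D^*=\{0<|z|<\epsilon\}$ and with an essential singularity at~$0$. The solutions are the inverse branches of the (multivalued) primitive $T(z)=\int_{z_0}^z dz/f$ of the time form $\omega=dz/f$, and the number of determinations over a time~$t$ equals the number of points of $T^{-1}(t)$ with distinct $z$-coordinate. Thus it suffices to show that, for generic~$t$, the fibre $T^{-1}(t)$ is infinite. I would begin with a dichotomy provided by the big Picard theorem applied to $f\colon D^*\to\mathbf{P}^1$: as~$0$ is essential, $f$ omits at most two values near~$0$, so either (A) $f$ has infinitely many zeros or infinitely many poles accumulating at~$0$, or (B), after shrinking~$\epsilon$, $f$ is holomorphic and nowhere vanishing on~$D^*$.

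In case (A) the conclusion follows from the mechanism already recorded in the Remark and the proof of Lemma~\ref{fewstat}. Infinitely many poles of~$f$ are infinitely many zeros of~$\omega$, hence infinitely many critical points of~$T$, hence infinitely many branch points of the solution. Infinitely many zeros of~$f$ produce, just as in Lemma~\ref{fewstat}, infinitely many pairwise disjoint unbounded regions in the time coordinate (half-planes when the zero is simple, punctured neighbourhoods of~$\infty$ otherwise) that must lie on distinct sheets. Either way the solution acquires infinitely many determinations.

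Case (B) is the core. Expanding $1/f=\sum_n a_n z^n$, the primitive has the form $T(z)=a\log z+h(z)$, where $a=\mathrm{Res}_0(1/f)$ and~$h$ is single-valued on~$D^*$ with an essential singularity at~$0$. If $a=0$ then $T=h$ is itself single-valued with an essential singularity, and the big Picard theorem supplies, for generic~$t$, infinitely many $z_n\to 0$ with $h(z_n)=t$; these are infinitely many determinations. If $a\neq 0$ the residue forces genuine multivaluedness, and the obstacle is that the exponential naturally attached to~$T$ takes a given value along a whole arithmetic progression $t+2\pi i a\,\mathbf{Z}$ of times rather than at~$t$. I would get around this by introducing $G_t(z)=z\,e^{(h(z)-t)/a}=\exp\bigl((T(z)-t)/a\bigr)$, which, being $2\pi i$-periodic in~$\log z$, descends to a single-valued function on~$D^*$ with an essential singularity at~$0$. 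For all~$t$ outside a discrete set the value~$1$ is non-exceptional for~$G_t$, so the big Picard theorem yields infinitely many $z_n\to 0$ with $G_t(z_n)=1$, i.e.\ with $T(z_n)\in t+2\pi i a\,\mathbf{Z}$; transporting each such determination by the monodromy $t\mapsto t+2\pi i a$ around~$0$ (which returns to the same point~$z_n$ of the curve but jumps sheets) collapses the progression onto the single time~$t$ and produces infinitely many points of~$T^{-1}(t)$.

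The step I expect to be the real obstacle is exactly this last one in case (B): reconciling the logarithmic monodromy born from a nonzero residue with the erratic value distribution of the essential singularity. The device of replacing~$T$ by the $2\pi i$-periodic exponential~$G_t$ is what both renders the big Picard theorem applicable and guarantees that the infinitely many points it produces land, after monodromy, over one and the same time and correspond to genuinely distinct determinations.
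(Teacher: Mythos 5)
Your strategy coincides with the paper's: pass to the single-valued primitive of the time form when the residue vanishes, and to its exponential $G_t=\exp\bigl((T-t)/a\bigr)$ when it does not, and extract infinitely many determinations from the big Picard theorem. Case (B) and the zeros half of case (A) are correct and are exactly what the published proof does (in contrapositive form: assume finitely many determinations, conclude $T$ or $e^{T/a}$ is meromorphic, hence so is $f$).

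The one step that does not hold as written is the poles half of case (A). From ``infinitely many critical points of $T$, hence infinitely many branch points of the solution'' you cannot conclude ``infinitely many determinations'': a surface spread over $\mathbf{C}$ with finitely many sheets can perfectly well have infinitely many branch points (think of $\sqrt{\sin t}$), and the Remark following Lemma~\ref{fewstat} makes precisely this point in the present context --- the pullback of a vector field to a hyperelliptic curve of genus $g$ has $2(g+1)$ simple poles yet its solutions have only two determinations; poles, unlike zeros, are not cumulative. So the inference you lean on there is exactly the one the paper warns against. The sub-case is nevertheless true, and your own case (B) already proves it: that argument uses only that $1/f$ is holomorphic on $D^*$ with an essential singularity, which remains the case when $f$ has infinitely many poles accumulating at $0$ (these are merely zeros of $1/f$, i.e.\ critical points of $T$, harmless for Picard provided $t$ also avoids the countable set of critical values so that each preimage carries a genuine local inverse). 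The clean dichotomy is therefore: either $f$ has infinitely many zeros accumulating at $0$ (handled by the half-plane/overlap mechanism of Lemma~\ref{fewstat}), or after shrinking the disc $1/f$ is holomorphic with an essential singularity and the Picard argument on $T$ or $G_t$ applies verbatim, poles of $f$ or not.
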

\begin{proof} Consider the meromorphic vector field~$X=f(z)\indel{z}$  in a punctured neighborhood~$U^*$ of~$0$ in~$\mathbf{C}$ and suppose that its solutions have only finitely many determinations. By Lemma~\ref{fewstat}, the zeros of~$f$ do not accumulate to~$0$. The time form~$\omega=1/f(z)dz$ is holomorphic in~$U^*$. If the integral of~$\omega$ around~$0$ vanishes, consider the function~$h:U\to \mathbf{C}$ given by~$\int^z \omega$. If~$h$ has an essential singularity at~$0$ then, by Picard's Great Theorem, it attains most values infinitely many times: the solutions of~$X$ (given by the inverse of~$h$) have infinitely many determinations. We must conclude that~$h$ is meromorphic at~$0$, and thus that~$f$ is meromorphic as well. If the integral of~$\omega$ around~$0$ does not vanish, we may suppose, up to multiplying the vector field by a constant, that this integral is~$2i\pi$. Let~$h:U\to \mathbf{C}$,  $h(z)=\exp(\int^z \omega)$. Again, if~$h$ has an essential singularity at~$0$, by Picard's Great Theorem, it attains most values infinitely many times and the solutions of~$X$, given by~$t\mapsto h^{-1}(e^t)$, have, by an argument similar to the one used in the proof of lemma~\ref{fewstat}, infinitely many determinations.  Thus, $h$ is meromorphic and, since~$h'/h=1/f$, $f$ is meromorphic at~$0$.    
\end{proof}

In particular, a meromorphic vector field on~$\mathbf{C}$ whose solutions have finitely many determinations extends as a rational vector field to~$\mathbf{P}^1$.

\subsection{Affine structures on curves}

An \emph{affine structure} on a complex curve~$C$ is an atlas for its complex structure taking values in~$\mathbf{C}$ whose changes of coordinates lie within the affine group~$\mathrm{Aff}(\mathbf{C})=\{z\mapsto az+b\}$. An affine structure comes with a \emph{developing map}~$\mathcal{D}:\widetilde{C}\to\mathbf{C}$ and a \emph{monodromy} homomorphism~$\mathrm{mon}:\pi_1(C)\to \mathrm{Aff}(\mathbf{C})$ satisfying~$\mathcal{D}(\alpha\cdot p)=\mathrm{mon}(\alpha)(\mathcal{D}(p))$.

If the changes of coordinates of an affine structure lie within the group of translations~$\{z\mapsto z+b\}$, the affine structure is said to be a \emph{translation structure}. On a complex curve, translation structures are in correspondence with nowhere-vanishing holomorphic vector fields. If a curve~$C$ has a  nowhere-vanishing vector field~$X$ with time form~$\omega$, the charts of a translation structure are locally given by the primitives of~$\omega$ (the developing map of the translation structure induced by~$X$ is given by the integration of~$\omega$ along paths). Reciprocally, on a curve~$C$ endowed with a translation structure, the pull backs of the vector field~$\indel{z}$ on~$\mathbf{C}$ under the charts of the translation structure give a well-defined nowhere-vanishing vector field on~$C$.

An affine structure on a curve~$C$ is said to be \emph{uniformizable} if~$C$ is affinely equivalent to the quotient of an open subset of~$\mathbf{C}$ under the action of a group of affine transformations (with the affine structure inherited from the tautological affine structure of~$\mathbf{C}$).

In a curve endowed with a nowhere-vanishing holomorphic vector field, \emph{the induced affine structure is uniformizable if and only if the vector field admits a univalent maximal solution}. Let us sketch a proof of this fact. If~$C$ is a curve endowed with an affine structure, the only obstruction for this structure to be uniformizable is the existence of an open path~$\gamma:[0,1]\to C$, $\gamma(0)\neq \gamma(1)$, such that the developing map of the affine structure along the image of~$\gamma$ maps the endpoints of~$\gamma$ to the same point. If~$C$ has a nowhere-vanishing vector field~$X$ with time form~$\omega$ and~$X$ does not have a univalent maximal solution, there exists an open path~$\gamma:[0,1]\to C$ such that~$\int_\gamma \omega=0$, which, by the previous arguments, prevents the affine structure from being uniformizable.

Given two affine structures on a disk~$\Delta$ with corresponding coordinate charts~$g_i:\Delta\to \mathbf{C}$, $i=1,2$, for~$h=g_2\circ g_1^{-1}$, the one-form given by the pullback of\begin{equation}\label{affinedefect}\frac{h''(z)}{h'(z)}dz\end{equation}
by~$g_1$ depends only on the affine structures (and not on the particular coordinate charts). It vanishes if and only if~$g_1$ and~$g_2$ define the same affine structure, thus measuring the difference of the affine structures. Reciprocally, given a holomorphic one-form~$\eta$ and an affine structure on~$\Delta$, there is a second affine structure on~$\Delta$ such that the difference with the original one is~$\eta$.

It will be important to understand the affine structures defined in the complement of a discrete set of points (affine structures with \emph{singularities}) and, among them, the \emph{uniformizable} ones (as before, those that,  in the complement of the singularities, are affinely equivalent to the quotient of an open subset of~$\mathbf{C}$ by a group of affine transformations). Consider an affine structure on~$\Delta^*=\Delta\setminus\{0\}$ that does not extend as an affine structure to~$0$ (an affine structure with a \emph{singularity} at~$0$) and an auxiliary affine structure on~$\Delta$. Their difference is a one-form~$\eta$ in~$\Delta^*$ that is not holomorphic at~$0$. The \emph{ramification index} at~$0$ of the affine structure with singularities is
\[\mathrm{ind}(\Delta,0)=\frac{1}{\mathrm{Res}(\eta,0)+1}.\]
For the affine structure on~$\Delta$ whose charts are branches of~$\sqrt[n]{z}$, $n\in\mathbf{Z}^*$, the ramification index is~$n$; for the one whose developing map is~$\log(z)$, it is~$\infty$. In all these cases the difference with a regular affine structure on~$\Delta$ has a simple pole. All these local affine structures with singularities are uniformizable and are in fact the only ones. Moreover, they are characterized by the invariants of the one-form: \emph{an affine structure on~$\Delta^*$ with a singularity at~$0$ is uniformizable if and only if for the one-form~$\eta$ measuring its difference with a regular affine structure on~$\Delta$, $\eta$ has a simple pole at~$0$ with residue of the form~$1/n-1$, $n\in\mathbf{Z}^*\cup\{\infty\}$} (see~\cite[prop.~6]{guillot-rebelo}). 

This local result globalizes on compact curves as follows (see~\cite[prop.~7]{guillot-rebelo}):

\begin{proposition} \label{affinecomclas} Up to affine equivalence, the uniformizable affine structures with singularities on compact curves are:
	\begin{itemize} 
		\item Rational orbifolds. The rational curve $\mathbf{C}\cup\{\infty\}$ (with the tautological affine structure of~$\mathbf{C}$) and the quotients of the latter by the cyclic groups of linear transformations of order~$n$ fixing~$0$ and~$\infty$, realized by~$z\mapsto z^n$. The ramification indices are~$n$ (at~$0$) and~$-n$ (at~$\infty$).
		\item Parabolic. (i) The parabolic cylinder, $\mathbf{C}/2i\pi\mathbf{Z}$ compactified by~$\mathbf{P}^1$ via the map~$w= e^z$;  equivalently, $\mathbf{P}^1$ with the affine structure induced by the vector field~$w\indel{w}$, with two points of ramification index~$\infty$, at~$0$ and~$\infty$. (ii) The orbifold~$(2,2,\infty)$, quotient of the latter by the involution~$w\mapsto 1/w$ (induced by~$z\mapsto -z$), uniformized by~$\cos(iz)=\frac{1}{2}(e^z+e^{-z})$, having one point with ramification index~$\infty$ and two with ramification index~$2$. 
		\item Elliptic. The curves and orbifolds arising as compact quotients of~$\mathbf{C}$ under the action of (crystallographic) subgroups of the affine group. These groups contain a lattice as a normal subgroup of finite index and the curves are thus quotients of elliptic curves. As curves, they are either elliptic (without singularities for the affine structure), or rational curves with singularities of ramification indices~$p_i$ with~$\sum (1-1/p_i)=2$. The possibilities are~$(2,3,6)$, $(2,4,4)$, $(3,3,3)$ or~$(2,2,2,2)$, the last one constituting a family parametrized by the cross-ratio.
		\item Hopf tori. Elliptic curves of the form $\mathbf{C}^*/G$, $G$  a discrete subgroup of~$\mathbf{C}^*$ containing an element~$\lambda$ with~$|\lambda|\neq 1$ (the affine structure has no singularities).
	\end{itemize}
\end{proposition}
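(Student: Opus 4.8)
The plan is to reduce the classification to a classification of monodromy groups. By the very definition of uniformizability, a uniformizable affine structure presents the underlying curve, regarded as an orbifold $\mathcal{O}$ carrying a cone point of order $|n_i|$ at each singular point $p_i$ (a puncture when $n_i=\infty$), as a quotient $U/\Gamma$, where $U\subset\mathbf{C}$ is the image of the developing map $\mathcal{D}$ and $\Gamma=\mathrm{mon}(\pi_1^{\mathrm{orb}})\subset\mathrm{Aff}(\mathbf{C})$ acts properly discontinuously with compact orbifold quotient. It therefore suffices to classify the discrete subgroups $\Gamma\subset\mathrm{Aff}(\mathbf{C})$ with such a quotient, together with the set $U$ on which they act. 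I would organize everything around the linear-part homomorphism $\ell:\Gamma\to\mathbf{C}^*$, $\ell(z\mapsto az+b)=a$, whose kernel $T$ is the subgroup of translations in $\Gamma$ (a discrete, hence rank $\le 2$, subgroup of $\mathbf{C}$) and whose image is $L=\ell(\Gamma)$.

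The dichotomy that drives the argument is whether $L$ contains a loxodromic ratio, that is, some $a$ with $|a|\neq 1$. First I would record two discreteness constraints. If $\gamma\in\Gamma$ is loxodromic and $\tau\in T$ is a nontrivial translation by $v$, the conjugates $\gamma^{k}\tau\gamma^{-k}$ are translations by $a^{k}v$, which accumulate at the identity as $k\to\pm\infty$; discreteness thus forces $T=\{1\}$ as soon as $\Gamma$ contains a loxodromic element. Likewise, two loxodromic elements with distinct fixed points have, since $\mathbf{C}^*$ is abelian, a commutator that is a nontrivial translation, again contradicting $T=\{1\}$; hence all loxodromic elements share a common fixed point. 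Finally, an infinite $L$ contained in the unit circle would be a non-discrete subgroup of $U(1)$, producing dense rotations and precluding proper discontinuity. Consequently $L$ is either finite or contains a loxodromic element.

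When $L$ is infinite it contains a loxodromic element, so $T=\{1\}$ and, after conjugating the common fixed point to the origin, $\Gamma\subset\mathbf{C}^*$ acts multiplicatively on $U\subset\mathbf{C}^*$; since roots of unity in $\mathbf{C}^*$ fix only $0\notin\mathbf{C}^*$, the action is free, and a discrete group with compact quotient yields exactly the Hopf tori $\mathbf{C}^*/G$ of the hyperbolic case, without cone points. When $L$ is finite I would split according to the rank of $T$. In rank $2$, $\Gamma$ contains a lattice of finite index, $U=\mathbf{C}$, and $L$ is a finite subgroup of $\mathbf{C}^*$ preserving the lattice; the crystallographic restriction forces $|L|\in\{1,2,3,4,6\}$, producing the torus and the Euclidean orbifolds $(2,2,2,2)$, $(3,3,3)$, $(2,4,4)$, $(2,3,6)$ of the elliptic case. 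In rank $1$, the translations alone give $\mathbf{C}/\mathbf{Z}\cong\mathbf{C}^*$, which $\exp$ compactifies to $\mathbf{P}^1$ with two cone points of index $\infty$; the only compatible linear part is $L=\{1\}$ or $\{\pm 1\}$, giving the cylinder and the orbifold $(2,2,\infty)$ of the parabolic case. In rank $0$, $\Gamma$ is a finite cyclic rotation group fixing a point, $U=\mathbf{C}$, the quotient is realized by $z\mapsto z^{n}$, and its compactification gives the rational orbifolds, with cone indices $n$ at $0$ and $-n$ at $\infty$.

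To finish I would confirm that each listed model is genuinely uniformizable and that the bookkeeping of indices is consistent, using a residue identity as a global check. Passing to the flat meromorphic connection that the affine structure induces on the tangent bundle $TC$, the local model $\sqrt[n]{z}$ — whose difference \eqref{affinedefect} from a regular structure has residue $1/n-1$ — shows this connection has a logarithmic pole of residue $1/n_i-1$ at each $p_i$, so that the residue theorem yields $2-2g=\sum_i(1-1/n_i)$; this is satisfied by every signature in the list and pins down the admissible signatures in the finite-index cases. The step I expect to demand the most care is the passage from proper discontinuity to the rigid group-theoretic conclusions of the second paragraph: verifying that the conjugation and commutator estimates really do force $T=\{1\}$ and a common fixed point for the loxodromic elements, and excluding the borderline case of an infinite linear part accumulating on the unit circle. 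Once these dynamical points are secured, the remaining work is either the classical crystallographic classification or routine compactification arguments.
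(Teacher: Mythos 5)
Your route --- reducing to the classification of the developing image $U$ together with the monodromy group $\Gamma\subset\mathrm{Aff}(\mathbf{C})$, splitting on the linear-part homomorphism, and invoking the planar crystallographic restriction --- is precisely the one the paper intends: the paper gives no argument beyond citing \cite[Prop.~7]{guillot-rebelo} and remarking that the result ``is related to the classification of the crystallographic groups of the plane,'' and your discreteness lemmas (a loxodromic element kills the translation subgroup and forces a common fixed point; an infinite unimodular linear part precludes proper discontinuity) are the standard way to carry this out. One point in your setup is genuinely imprecise, although your case analysis silently corrects it: the singular points of negative index (the point~$\infty$ of the rational orbifolds) and the index-$0$ point of the $(2,2,\infty)$ orbifold are \emph{not} cone points or punctures of $U/\Gamma$, contrary to your opening reduction --- they lie outside the image of the developing map and appear only when $U/\Gamma$ is compactified (at the $(2,2,\infty)$ point the local model is a dihedral quotient of the logarithmic one, which is why the paper records its index as $0$ rather than $\infty$); taken literally, your first paragraph would have you search for a group with two cone points of order $n$ on $\mathbf{P}^1$ in the rational case, which does not exist. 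You should therefore separate the honest cone points of $U/\Gamma$ (finite positive index, fixed points of finite stabilizers in $U$) from the ends added in the compactification (negative, zero or infinite index) before running the group-theoretic classification. With that distinction made, your argument is sound, and your residue identity $\sum(1-1/n_i)=2-2g$ is a correct necessary condition --- though, as your torus discussion implicitly shows, it is the monodromy analysis and not the residue count that, for instance, separates the uniformizable affine structures on an unramified elliptic curve (translation structures and Hopf tori) from the non-uniformizable ones.
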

The result is, of course, related to the classification of the crystallographic groups of the plane. We refer to~\cite[prop.~7]{guillot-rebelo} for a proof along the lines of this article (see also~\cite{bb} for an early appearance of this classification in the setting of complex differential equations).

\begin{remark}\label{orbifold} It may be useful to consider the curves in the above list as orbifolds (declaring that at a point where the affine structure has ramification index~$n\neq \infty$, the local angle is~$2\pi/n$, and removing the points where the affine structure has ramification index~$\infty$). Under such convention, the monodromy representation associated to the affine structure is naturally a representation of the orbifold fundamental group. \end{remark}

\subsection{Foliations and vector fields on surfaces} We refer to~\cite[ch.~1]{brunella-birational} for a swift presentation of some general facts about the local theory of foliations and vector fields on surfaces, to~\cite[section~5.3]{loray} for a more detailed one.

Let~$X$ be a meromorphic vector field on the complex surface~$S$ and let~$\mathcal{F}$ be the foliation with singularities induced by~$X$. In the open subset where~$X$ is holomorphic and nonzero, the leaves of the induced foliation~$\mathcal{F}$ are naturally endowed with a translation structure (in particular, an affine one) whose transverse variation is holomorphic. If a curve~$C$ is an irreducible component of the locus of zeros or poles of~$X$ that is invariant by~$\mathcal{F}$, the leafwise affine structure induced by~$X$ extends to~$C$~\cite[prop.~8]{guillot-rebelo}, although the translation structure does not.  For example, for~$X=f(x,y)y^q\indel{x}$ with~$q\in\mathbf{Z}\setminus\{0\}$, $f$ holomorphic and nonzero, on the leaf~$\{y=0\}$, along which~$X$ has zeros or poles, this affine structure is the one induced by the vector field~$y^{-q}X$ (on the leaf~$y=y_0$, $y_0\neq 0$, the vector fields~$X$ and~$y^{-q}X$ are proportional and induce thus the same affine structure). In this way, in the complement of the singularities of~$\mathcal{F}$ and of the curves of zeros and poles that are not invariant by~$\mathcal{F}$, the foliation~$\mathcal{F}$ admits a leafwise affine structure. It extends as an affine structure  with singularities  to the nonsingular points of~$\mathcal{F}$ lying at the curve of zeros and poles.

\begin{definition} Let~$X$ be a meromorphic vector field on the complex surface~$S$, $\mathcal{F}$ the foliation induced by~$X$. The~\emph{uniformizable} or \emph{univalent} locus, denoted by $\mathfrak{U}$, is the subset of~$S\setminus\mathrm{Sing}(\mathcal{F})$ of leaves carrying a uniformizable affine structure (with singularities).
\end{definition}
The set~$\mathfrak{U}$ is naturally saturated by~$\mathcal{F}$. All the leaves of~$\mathcal{F}$ parametrized by a univalent maximal solution belong to~$\mathfrak{U}$.

\begin{proposition}\label{unifisclosed} Let~$X$ be a meromorphic vector field on the complex surface~$S$. The uniformizable locus~$\mathfrak{U}\subseteq S$ associated to~$X$  is closed. \end{proposition} 
\begin{proof}[Sketch of proof]  Lack of uniformizability of an affine structure on a curve~$C$ is equivalent to the existence of an open path~$\gamma:[0,1]\to C$, $\gamma(0)\neq\gamma(1)$ such that the developing map of the affine structure along the image of~$\gamma$ maps~$\gamma(0)$ and~$\gamma(1)$ to the same point. If~$C$ is a leaf of~$\mathcal{F}$ where the affine structure is not uniformizable and~$\gamma$ is as above, we can lift~$\gamma$ to a path~$\gamma'$ in a neighboring leaf~$C'$ in such a way that the lift extends to a local isomorphism of the affine structures along neighborhoods of the images of~$\gamma$ and~$\gamma'$. But this means that the affine structure on~$C'$ is not uniformizable (see~\cite[cor.~12]{guillot-rebelo} for details).\end{proof}

Within~$\mathfrak{U}$, the leafwise geometry conditions the holonomy. This is the content of the \emph{Fundamental Lemma}, for the proof of which we refer the reader to~\cite[section~4.2]{guillot-rebelo}:

\begin{lemma}[Fundamental Lemma]\label{holmon} Let~$X$ be a holomorphic vector field on the surface~$M$, $\mathcal{F}$ the foliation induced by~$X$. Let~$\mathfrak{U}\subset M\setminus \mathrm{Sing}(\mathcal{F})$ denote the set given by the leaves where the induced affine structure (with singularities) is uniformizable. Let~$p\in\mathfrak{U}$, let~$L$ be the leaf of~$\mathcal{F}$ passing through~$p$ and suppose that the affine structure of~$L$ at~$p$ is nonsingular. Let~$T$ be a transversal of~$\mathcal{F}$ through~$p$, let~$\Sigma=T\cap \mathfrak{U}$. The restricted holonomy representation~$\mathrm{hol}:\pi_1(L,p)\to \mathrm{Homeo}(\Sigma,p)$ factors through the monodromy representation~$\mathrm{mon}:\pi_1(L,p)\to \mathrm{Aff}(\mathbf{C})$, this is, $\mathrm{hol}$ is a homomorphism of the image of~$\pi_1(L,p)$ under~$\mathrm{mon}$ and, in particular, if an element of~$\pi_1(L,p)$ has trivial monodromy, it has trivial restricted holonomy as well.
\end{lemma}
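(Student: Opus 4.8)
The plan is to realize the foliation holonomy, restricted to the uniformizable locus, as the leafwise affine monodromy read in a developing coordinate on the transversal; once this is done, the factoring is automatic. First I would localize. Fix the class $\alpha\in\pi_1(\mathcal{L},p)$, choose a representative loop, and take a thin tubular neighborhood $V$ of it, thin enough to avoid $\mathrm{Sing}(\mathcal{F})$ and the curves of zeros and poles of $X$, so that over $V$ the leafwise affine structure is non-singular and, by the construction recalled before the statement, varies holomorphically in the transverse direction. Since $V$ retracts onto the loop, $\pi_1(V)\cong\mathbf{Z}$ is generated by $\alpha$. The leafwise developing maps then assemble, on the universal cover $\widetilde{V}$, into a single holomorphic map $\mathbf{D}:\widetilde{V}\to\mathbf{C}$ restricting to a developing chart of the affine structure on each leaf. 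The transverse-holomorphic monodromy of this foliated affine structure is a homomorphism $\pi_1(V)\to\mathrm{Aff}(\mathbf{C})$, which on the generator equals precisely $\mathrm{mon}(\alpha)$; thus the generator $\tau$ of the deck group of $\pi:\widetilde{V}\to V$ satisfies $\mathbf{D}\circ\tau=\mathrm{mon}(\alpha)\circ\mathbf{D}$ on $\widetilde{V}$. Here one uses that, although the local translation charts of $X$ recombine by translations, a loop that encloses a point where $\mathcal{L}$ meets the divisor of $X$ can carry a genuine linear part, coming from the singular affine structures of ramification index $n$ classified before the statement.

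Next I would express the holonomy through $\mathbf{D}$. The transversal $T$ through $p$ lifts to $\widetilde{T}\subset\widetilde{V}$, and by definition the first-return map $\mathrm{hol}(\alpha)$ is $\pi\circ\tau$ read on $\widetilde{T}$. Setting $\psi=\mathbf{D}|_{\widetilde{T}}$, the intertwining relation gives, for every $q$,
\begin{equation*}
\psi(\mathrm{hol}(\alpha)(q))=\mathrm{mon}(\alpha)\big(\psi(q)\big).
\end{equation*}
To turn this into a statement on $\Sigma=T\cap\mathfrak{U}$ I would invoke uniformizability: on the leaves of $\mathfrak{U}$ the developing maps are injective (this is the very characterization of uniformizable structures used, together with Proposition~\ref{affinecomclas}, to control them), and combined with the non-singularity of the affine structure at $p$ this yields that $\psi$ is a homeomorphism from a neighborhood of $p$ in $\Sigma$ onto its image. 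Consequently $\mathrm{hol}(\alpha)|_\Sigma=\psi^{-1}\circ\mathrm{mon}(\alpha)\circ\psi$, which depends on $\alpha$ only through $\mathrm{mon}(\alpha)$. Setting $\rho(g)=\psi^{-1}\circ g\circ\psi$ then exhibits $\mathrm{hol}=\rho\circ\mathrm{mon}$, the desired factoring.

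The main obstacle is the construction and control of $\mathbf{D}$, that is, making rigorous that the leafwise affine structures patch into a transversally holomorphic developing map over $V$ whose monodromy is a single affine representation of $\pi_1(V)$. The two delicate points are that this representation be locally constant in the transverse parameter, so that the generator acts by the fixed map $\mathrm{mon}(\alpha)$ rather than by an $s$-dependent family of affine maps, and that $\psi=\mathbf{D}|_{\widetilde{T}}$ be injective on $\Sigma$. The first follows from the holomorphic transverse variation together with the retraction $V\simeq\alpha$; the second is exactly where the hypotheses $q\in\mathfrak{U}$ and non-singularity at $p$ are essential, since without uniformizability the developing map may fail to be injective along a transverse path, which is precisely the obstruction to uniformizability recalled before the statement.
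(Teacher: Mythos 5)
The paper itself does not prove this lemma: it refers to~\cite[Section~4.2]{guillot-rebelo}. Measured against what that proof must accomplish, your argument has a gap at its central step. On the universal cover $\widetilde{V}$ of the tubular neighbourhood, the leafwise developing map satisfies $\mathbf{D}\circ\tau=A_q\circ\mathbf{D}$, where $A_q\in\mathrm{Aff}(\mathbf{C})$ is the affine comparison computed on the leaf through $q\in T$. Transverse holomorphy makes $q\mapsto A_q$ a \emph{holomorphic family} with $A_p=\mathrm{mon}(\alpha)$; nothing makes it constant. Your justification (``holomorphic transverse variation together with the retraction $V\simeq\alpha$'') does not yield constancy: the retraction only says $\pi_1(V)$ is generated by $\alpha$, and a leaf-dependent family of affine maps is not a representation of $\pi_1(V)$. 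If $A_q$ were constant, the conclusion would hold on all of $T$ rather than on $\Sigma=T\cap\mathfrak{U}$, and the uniformizability hypothesis would be idle; the paper's own footnote about the period function in the proof of Prop.~18 of~\cite{guillot-rebelo} is precisely a warning that such period functions need not be constant. A second problem is the claimed injectivity of $\psi=\mathbf{D}|_{\widetilde{T}}$: $\mathbf{D}$ is a \emph{leafwise} chart, normalized leaf by leaf by an arbitrary affine map, and with the natural flow-box normalization it is constant on $T$. Uniformizability is an intrinsic property of the affine structure of each individual leaf and gives no transverse injectivity, so the formula $\mathrm{hol}(\alpha)|_\Sigma=\psi^{-1}\circ\mathrm{mon}(\alpha)\circ\psi$ is not meaningful. (A smaller point: shrinking $V$ to avoid the divisor of $X$ excludes the main applications, where $\mathcal{L}$ is a separatrix contained in an invariant component of that divisor; there the relevant affine structure is the limit of the structures on nearby leaves, whose monodromy is genuinely affine rather than a translation, and $V$ cannot avoid the divisor.)

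What the proof actually requires is that uniformizability of the individual leaves $L_q$, $q\in\Sigma$, do the work you are asking constancy of $A_q$ to do. One reduces to showing that $\mathrm{mon}(\alpha)=\mathrm{id}$ forces $\mathrm{hol}(\alpha)|_\Sigma=\mathrm{id}$ (the factoring follows by applying this to $\alpha\beta^{-1}$, using that $\mathfrak{U}$ is saturated). For $q\in\Sigma$ the lift $\alpha_q$ runs, in flow-box coordinates, from $(q,0)$ to $(h(q),0)$; since $A_q$ is merely \emph{close} to the identity, one can append to $\alpha_q$ a short arc inside the terminal plaque so that the developing map of $L_q$ takes the same value at the two endpoints of the corrected path. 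Uniformizability of $L_q$ means exactly that the covering projection $\widetilde{L_q}\to L_q$ factors through the developing map ($L_q$ is a quotient of an open subset of $\mathbf{C}$ by a group of affine transformations), so equal developing values at the endpoints force the corrected path to be closed; this places $(q,0)$ in the plaque through $(h(q),0)$ and hence gives $h(q)=q$. This is where the hypothesis $q\in\mathfrak{U}$ enters, and that mechanism is absent from your argument.
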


\subsection{Parabolic ends of leaves}
The leaves of~$\mathcal{F}$ are not closed in general, but some ends of some leaves may have a particularly mild dynamic behavior. Let~$L$ be a leaf of~$\mathcal{F}$. Partially borrowing the terminology from~\cite{brunella-bouts}, we will say that an end of~$L$ (considering the latter as a curve) is  an \emph{analytic parabolic end} if there exists a holomorphic map $\gamma:\Delta\to S$ (for~$\Delta=\{z; |z|<1\}$) such that, for~$\Delta^*=\Delta\setminus\{0\}$, $\gamma(\Delta^*)\subset L$, $\gamma(0)\notin L$, and such that~$\gamma|_{\Delta^*}$ is a biholomorphism between~$\Delta^*$ and a neighborhood of the corresponding end of~$L$. Such an analytic parabolic end of~$L$ (considered as a curve) may be compactified by adding~$\gamma(0)$ and declaring that~$\gamma$ is a local parametrization.

\subsection{Reduced foliations and vector fields}\label{bir} 

The objects that concern our Main Theorem transform naturally under birational transformations of the ambient manifold. We will use these transformations it in order to simplify the local aspects of our study.

A foliation~$\mathcal{F}$ on a surface is said to be \emph{reduced in Seidenberg's sense} if, in the neighborhood every point~$p$, either $\mathcal{F}$ is nonsingular at~$p$, or $\mathcal{F}$ is tangent to a holomorphic vector field vanishing at~$p$  whose linear part  either (i)~has two nonzero eigenvalues whose ratio is not a positive rational,  or (ii)~has one zero eigenvalue but is not nilpotent. Seidenberg's theorem affirms that every foliation becomes reduced after a locally finite number of blowups.

A meromorphic vector field~$X$ on a complex surface is said to be \emph{reduced} if~$\mathcal{F}$, the foliation it induces, is reduced in Seidenberg's sense and if, at every point~$p$, the curve given by the union of the curve of zeros and poles of~$X$ and the union of the separatrices of~$\mathcal{F}$ through~$p$ (if~$p$ is a singularity of~$\mathcal{F}$) or the integral curve of~$\mathcal{F}$ through~$p$ (if~$\mathcal{F}$ is nonsingular at~$p$), has normal crossings. A meromorphic vector field may be transformed, by a locally finite number of blowups, to a reduced one~\cite[section~5]{guillot-rebelo}. 

\subsection{Special foliations} Some special foliations adapted to rational or elliptic fibrations will play a prominent role in our discussion. We refer the reader to~\cite[ch.~4]{brunella-birational} for details.

\subsubsection{Riccati foliations}\label{subsec-ric} A foliation~$\mathcal{F}$ on a compact complex surface~$S$ is a \emph{Riccati foliation} is there exists a rational fibration $\Pi:S\to B$ (possibly with singular fibers) that is \emph{adapted} to~$\mathcal{F}$ in the sense that~$\mathcal{F}$ is transverse to the generic fiber of~$\Pi$. The fibers which are not transverse to~$\mathcal{F}$ are called \emph{special}. If~$p_1,\ldots,p_k$ are points in~$B$ corresponding to the special fibers, there is a \emph{monodromy}\footnote{Warning!!! we are using the word \emph{monodromy} for two different concepts. We have the monodromy representation associated to a developing map of an affine structure and the monodromy representation of a Riccati (or turbulent) foliation.} representation~$\rho:\pi_1(B\setminus\{p_1, \ldots, p_k\})\to \mathrm{PSL}(2,\mathbf{C})$, from which one can recover many features of~$\mathcal{F}$.

Some natural birational transformations for Riccati foliations are given by \emph{elementary transformations} of the underlying ruled surface: if~$F$ is a fiber in the neighborhood of which~$\Pi$ is a locally trivial fibration, one may blow up a point in~$F$, creating an exceptional divisor~$D$ of self-intersection~$-1$ and making~$F$ a curve of self-intersection~$-1$, which may then be blown down, making~$D$ a curve of self-intersection~$0$. In coordinates~$(z,w)$ in~$\Delta\times \mathbf{P}^1$,
\begin{equation}\label{flip}(z,w)\mapsto(z,zw)\end{equation}
is such an elementary transformation.  Up to a birational transformation and in suitable coordinates, a Riccati foliation is, in the neighborhood of a fiber, of one of the following kinds (see~\cite[ch.~4, prop.~2]{brunella-birational}):
\begin{description}
	\item [Nonsingular or transverse] Those given by~$dw$.
	\item [Nondegenerate, nonparabolic (hyperbolic/elliptic)] Given by~$\lambda wdz-zdw$, with $\lambda\in\mathbf{C}\setminus \mathbf{Q}$. The monodromy is~$w\mapsto e^{2i\pi\lambda}w$. We may further distinguish among the \emph{hyperbolic} ($\lambda\in\mathbf{C}\setminus\mathbf{R}$) and \emph{elliptic} ($\lambda\in\mathbf{R}\setminus\mathbf{Q}$) kinds. The elementary transformation~(\ref{flip}) replaces~$\lambda$ by~$\lambda+1$. 
	\item[Nondegenerate, parabolic] Those given by~$dz-z dw$. The monodromy is~$w\mapsto w+2i\pi$.
	\item[Dicritical] Those given by the forms
	\begin{equation}\label{ricdic} p w \,dz-qz\,dw 
	\end{equation}
	for some relatively prime integers~$p$ and~$q$, $q>p>0$. The foliation has the local first integral~$w^q/z^p$.  The monodromy   $w\mapsto e^{2i\pi p/q}w$  is periodic with period~$q$. The integer~$q$ is the \emph{multiplicity} of the fiber. The generic leaf of~(\ref{ricdic}) may be parameterized by~$t\mapsto (t^q,ct^p)$; its projection onto the base is a $q$-fold ramified cover.
	
	These can be seen as quotients of Riccati foliations in the neighborhood of a transverse fiber under the action of a cyclic group. To this effect, consider coordinates~$(x,y)$ in~$\Delta\times\mathbf{P}^1$ and the Riccati foliation~$\mathcal{G}$ given  by~$dy=0$. Let~$\eta$ be a primitive~$q$th root of unity and consider the action of~$\mathbf{Z}/q\mathbf{Z}$ generated by~$(x,y)\mapsto(\eta x,\eta^p y)$, which preserves~$\mathcal{G}$. The map~$(x,y)\mapsto(x^q,x^p/y)$ is invariant under this action and maps~$\mathcal{G}$ to the dicritical foliation~(\ref{ricdic}). We may choose to reduce the singularities of the foliation at the price of complicating the space. Upon reducing the singularity of~(\ref{ricdic}) at~$(0,0)$ we will find a chain of rational curves, all of them invariant by the foliation except one, transverse to the foliation, which is not one of the two extremal ones. Each one of the two invariant chains can be contracted to produce a cyclic quotient singularity. The resulting singular space is the quotient of~$\Delta\times\mathbf{P}^1$ under the above action of~$\mathbf{Z}/q\mathbf{Z}$.
	
	\item[Semidegenerate] There are two saddle-node singularities on~$F$ whose strong separatrices are contained in~$F$  (in particular, the holonomy of the foliation along~$F$ is tangent to the identity). The weak separatrices may or may not converge. 
	
	\item[Nilpotent] There is only one singular point in~$F$, with nilpotent linear part. These can be seen as quotients of semidegenerate models by an involution exchanging the two saddle-nodes. Its resolution has three rational divisors (including~$F$): one central divisor of self-intersection~$-1$ having one saddle-node singularity (with strong separatrix contained in the divisor) and two saddles where the other two divisors, of self-intersection~$-2$, intersect (there are no further singularities of~$\mathcal{F}$ in these). 
\end{description}
A Riccati foliation having special fibers as above will be said to be in~\emph{standard form}.

\paragraph{Suspensions} Let~$E$ be a curve with universal cover~$\widetilde{E}$, $\rho:\pi_1(E)\to\mathrm{Aut}(\mathbf{P}^1)$ a representation.  There is a natural diagonal action of~$\pi_1(E)$ on~$\widetilde{E}\times \mathbf{P}^1$, by deck transformations in the first factor, via~$\rho$ in the second one. Let~$S$ be the quotient. It is the \emph{suspension} of the representation~$\rho$. From the two natural foliations in~$\widetilde{E}\times \mathbf{P}^1$  we have, in~$S$, a rational fibration~$\Pi:S\to E$ and a foliation on~$S$ transverse to it (this foliation is  a Riccati one). 

If~$X$ is a vector field inducing a Riccati foliation~$\mathcal{F}$ on the surface~$S$, the divisor of zeros and poles of~$X$ is of the form~$\sum_{j}{a_j}F_j+\sum_i b_i E_i$, where~$F_j$ is a fiber (either special or transverse) and where the divisor $E_i$ is irreducible and not supported on a fiber (it may or may not be invariant by~$\mathcal{F}$). For a generic fiber~$H$, with respect to the tangent bundle~$T_\mathcal{F}$ of~$\mathcal{F}$, $T_\mathcal{F}\cdot H=0$ (see~\cite[ch.~4, section~1]{brunella-birational}), and since~$T_\mathcal{F}$ is given by the previous divisor, 
\begin{equation}\label{trzer} \left(\sum_i b_i E_i\right)\cdot H =\left(\sum_{j}{a_j}F_j+\sum_i b_i E_i\right)\cdot H=T_\mathcal{F}\cdot H =0.\end{equation}

\subsubsection{Elliptic fibrations, turbulent foliations} An \emph{elliptic fibration} on a surface~$S$ is a map~$\Pi:S\to B$ onto a curve, with connected fibers, such that the generic fiber is an elliptic curve. Kodaira classified, as divisors, the fibers of elliptic fibrations that are not elliptic curves, up to birational transformations. Kodaira's combinatorial models bear the symbols~$\mathrm{I}_n$ ($n\geq 0$) $\mathrm{II}$, $\mathrm{III}$, $\mathrm{IV}$, $\mathrm{I}_n^*$ ($n\geq 0$) $\mathrm{II}^*$, $\mathrm{III}^*$, $\mathrm{IV}^*$.  They are all \emph{divisors of elliptic fiber type}, effective divisors of the form~$D=\sum a_i D_i$ such that $D\cdot D_i=0$ and~$K_S\cdot D=0$ (see~\cite{bpv} for details). 

A foliation~$\mathcal{F}$ on a compact complex surface~$S$ is a \emph{turbulent foliation} if there exists an elliptic fibration $\Pi:S\to B$ that is \emph{adapted} to~$\mathcal{F}$ in the sense that~$\mathcal{F}$ is transverse to the generic fiber of~$\Pi$.  
A turbulent foliation comes with a \emph{monodromy} representation~$\rho:\pi_1(B\setminus\{p_1, \ldots, p_k\})\to \mathrm{Aut}(F)$.

In the neighborhood of a fiber, up to a ramified cover of the base and a bimeromorphic transformation, a turbulent foliation is nonsingular and adapted to a locally trivial elliptic fibration. In a turbulent foliation of a locally trivial elliptic fibration, a fiber may be either everywhere transverse to the foliation or totally invariant by it. This implies that, for a general turbulent foliation, we have two coarse kinds of fibers, \emph{transverse} (or \emph{dicritical}) and \emph{invariant}.

\subsection{The Brunella-Mendes-McQuillan classification of foliations}\label{sec:bmm}  In a series of works, Brunella, Mendes and McQuillan established a classification of foliations on algebraic surfaces in the spirit of Kodaira's classification of algebraic surfaces. Given a foliation~$\mathcal{F}$ on an algebraic surface~$S$, the \emph{Kodaira dimension of~$\mathcal{F}$} is defined, $\mathrm{kod}(\mathcal{F})\in\{-\infty,0,1,2\}$. The classification concerns those foliations whose Kodaira dimension is less than two. We refer the reader to~\cite{brunella-birational} and to~\cite{brunella-pisa} for a detailed presentation. For our study, we will use two remarkable outcomes of this classification: the description of foliations on algebraic surfaces admitting an invariant elliptic curve (theorem~\ref{brunella-elcurve}) and of those admitting a tangent entire curve (theorem~\ref{mq}).

\section{A dichotomy}
In the algebraic case, the proof of the Main Theorem will be separately made for each one of the two cases arising from the following dichotomy:
\begin{proposition}\label{dichotomy} Let~$S$ be a compact complex surface, $X$ a   meromorphic vector field on~$S$, $P\subset S$ the curve of poles of~$X$, $\Omega\subset\mathbf{C}$ an open set and~$\phi:\Omega\to S\setminus P$ a univalent maximal solution of the restriction of~$X$ to~$S\setminus P$. Either:
	\begin{itemize}
		\item $\Omega=\mathbf{C}$, or~$\phi$ extends holomorphically to a map~$\widehat{\phi}:\mathbf{C}\to S$; or
		\item there exists~$p\in M$,  $p\in\overline{\phi(\Omega)}\cap P$ such that $\overline{\phi(\Omega)}$ is not contained in any proper analytic subset in a neighborhood of~$p$. 
	\end{itemize}
\end{proposition}

\begin{proof} Let~$L=\phi(\Omega)$. If~$L$ does not accumulate to~$P$, there exists a neighborhood~$U$ of~$P$ that does not intersect~$L$. By the existence theorem for the solutions of ordinary differential equations and the compactness of~$S\setminus U$, there exists some~$\epsilon>0$ such that any solution of~$X$ with initial condition in~$S\setminus U$ is defined for~$|t|<\epsilon$. In particular, since~$\phi$ is univalent maximal and~$L\cap U=\emptyset$, $\Omega=\mathbf{C}$. Hence, if~$\Omega\neq\mathbf{C}$, there exists at least one connected component~$D_0$ of~$P$ such that~$L$ intersects any neighborhood of~$D_0$. Let~$\{t_i\}\subset\Omega$ be a sequence converging to~$t_\infty\in\mathbf{C}\setminus\Omega$ such that~$\{\phi(t_i)\}$ converges to~$p\in  P$.

	Let~$V$ be an analytic subset in a neighborhood~$U$ of~$p$, $p\in V$, containing~$\overline{L}\cap U$, and let~$V_0\subset V$ be an irreducible component containing infinitely many~$\phi(t_i)$. By Puiseux's theorem on the parametrization of analytic sets, there exists a holomorphic parametrization~$\gamma:(\Delta,0)\to (V_0,p)$. It compactifies one analytic parabolic end of~$L$. Through~$\gamma$, $X$ induces a meromorphic vector field~$X_0$ on~$\Delta^*$ (which, by lemma~\ref{fewstat}, extends holomorphically to~$0$). If~$X_0(0)=0$, in~$\Delta$, the solution of~$X_0$ takes infinite time to reach~$0$ (as explained in the proof of lemma~\ref{fewstat}). This contradicts the fact that~$\{t_i\}$ converges in~$\mathbf{C}$. We must conclude that~$X_0(0)\neq 0$. A local parametrization of the solution of~$X_0$ through~$p$ allows~$\phi$ to be extended to a neighborhood of~$t_\infty$ (which, a posteriori, turns out to be an isolated point of the complement of~$\Omega$).
	
	In this way, if~$\phi$ cannot be extended to a neighborhood of~$t_\infty$, $\overline{\phi(\Omega)}$ is not contained in any proper analytic subset of~$S$ in a neighborhood of~$p$.\end{proof}

In what follows, for~$S$   a  surface (not necessarily compact), $X$ a reduced meromorphic vector field on~$S$ and~$\phi:\Omega\to S$ a univalent maximal solution, we will let $L=\phi(\Omega)$ and~$\widehat{\mathcal{L}}$ be the curve obtained from~$L$ after adding all the analytic parabolic ends. By lemma~\ref{fewstat}, the restriction of $X$ to~$L$ extends holomorphically to~$\widehat{\mathcal{L}}$ and has at most two zeros (counted with multiplicity). We will denote by~$\mathcal{L}\subset\widehat{\mathcal{L}}$ the curve where the induced vector field is nonvanishing, $L\subset\mathcal{L}\subset\widehat{\mathcal{L}}$. Upon adding some isolated points to~$\Omega$ we can extend~$\phi$ to obtain a parametrization~$\phi:\Omega\to \mathcal{L}$. (There is no contradiction with the maximal character of~$\phi$: the maximal solution~$\phi$ may not be maximal as a function from a subset of~$\mathbf{C}$ into~$S$.) 

In section~\ref{sec:combi} we will prove the Main Theorem for the first possibility where proposition~\ref{dichotomy} leads; we will do it for  the second one in  section~\ref{sec:entire}.

\section{Local models at the limit set}

We will now give local models of reduced meromorphic vector fields in the neighborhood of points where~$\mathfrak{U}$ accumulates in a nonanalytic way (these appear, for instance, with the second possibility of proposition~\ref{dichotomy}). The result that follows is an analogue of proposition~17 in~\cite{guillot-rebelo}, which concerns semicomplete vector fields. The local models that we will obtain here are essentially the same as the ones obtained there, although our hypothesis are different. The proof  will follow a path not too far from and at times intersecting that of~\cite[prop.~17]{guillot-rebelo}.

\begin{proposition}\label{pr:norfor} Let~$S$ be a (not necessarily compact) surface, $X$ a reduced meromorphic vector field on~$S$, $P$ its curve of poles and~$\phi:\Omega\to S\setminus P$ a univalent maximal solution. Let~$L=\phi(\Omega)$ and let~$p\in \overline{L}$. If there does not exist a neighborhood~$U$ of~$p$ such that~$\overline{L}\cap U$ is analytic, either
	\begin{itemize}\item $X$ is holomorphic at~$p$, where it does not vanish or where it has an isolated zero; or
		\item in suitable coordinates and up to multiplication by a nonvanishing holomorphic function, $X$~is locally of one the forms of table~\ref{tab:norfor}.
	\end{itemize}
\end{proposition}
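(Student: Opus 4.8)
The plan is to run the local analysis along the lines of Proposition~17 of~\cite{guillot-rebelo}, replacing the semicompleteness hypothesis used there by the weaker assumption that a single maximal solution accumulates~$p$ non-analytically. Since~$\phi$ is a maximal solution, its image~$L$ carries a uniformizable leafwise affine structure, so~$L\subset\mathfrak{U}$; the hypothesis that~$\overline{L}$ is not algebraic near~$p$ says precisely that~$p$ is a non-analytic accumulation point of~$\mathfrak{U}$. Because~$X$ is reduced, near~$p$ the foliation~$\mathcal{F}$ is reduced in Seidenberg's sense, so~$p$ is either a regular point, a non-degenerate singularity with eigenvalue ratio~$\lambda$ not a positive rational, or a saddle-node; moreover the divisor~$D$ of zeros and poles of~$X$ has normal crossings with the separatrices (or with the leaf through~$p$). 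First I would fix coordinates~$(x,y)$ adapting these normal-crossing data, writing~$X$, up to a non-vanishing unit, as a model field whose divisor is supported on the coordinate axes.

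The first dichotomy is whether~$D$ passes through~$p$. If it does not, then~$X$ is holomorphic near~$p$, and I would argue that non-algebraicity of~$\overline{L}$ forces~$p$ to be an isolated zero of~$X$. Indeed, if~$p$ were a regular point of~$\mathcal{F}$, or if~$X$ vanished along a curve through~$p$, then the orbit accumulating~$p$ would do so along a single separatrix, and a Puiseux parametrization of that separatrix (as in the proof of Proposition~\ref{dichotomy}) would exhibit~$\overline{L}$ as analytic near~$p$, contrary to hypothesis. Hence~$p$ is a non-degenerate singularity with isolated vanishing, which is the first alternative of the statement.

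It remains to treat the case~$p\in D$, where~$X$ is genuinely meromorphic, and here the two ingredients are Lemma~\ref{fewstat} and the Fundamental Lemma~\ref{holmon}. Since~$\mathfrak{U}$ is closed and saturated, and the separatrices~$\Sigma$ of~$\mathcal{F}$ at~$p$ lie in~$\overline{L}\subset\mathfrak{U}$, each such~$\Sigma$ carries a uniformizable affine structure and thus admits a maximal solution of the restricted field; by Lemma~\ref{fewstat}, each~$X|_\Sigma$ is then holomorphic with at most two zeros counted with multiplicity, and together with the normal-crossing form of~$D$ this constrains both the position and the order of the poles of~$X$ near~$p$. Simultaneously, the Fundamental Lemma forces the holonomy of~$\mathcal{F}$ around the separatrices, restricted to~$\mathfrak{U}$, to factor through an affine monodromy; for a reduced singularity this makes the holonomy affinely linearizable, with linear part~$e^{2i\pi\lambda}$ determined by the eigenvalue ratio~$\lambda$. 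Matching these transverse data with the admissible ramification indices of uniformizable leafwise affine structures (Proposition~\ref{affinecomclas}, and the local computation preceding it) leaves only finitely many pairs of eigenvalue ratio and pole order, which are exactly the entries of Table~\ref{tab:norfor}.

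The main obstacle is the saddle-node case. There the holonomy is tangent to the identity and the leafwise affine structure develops with a logarithmic (index~$\infty$) or power-type singularity as the leaf approaches the strong separatrix, so the clean affine-linearization argument is unavailable; instead one must read the order of the pole along the strong separatrix directly from the requirement that the developing map along the accumulating orbit be injective, and exclude the subcases in which the induced affine structure fails to be uniformizable (equivalently, in which~$\overline{L}$ would become analytic). Carrying out this matching between the formal saddle-node invariants and the ramification data of Proposition~\ref{affinecomclas}, and checking that the surviving normal forms coincide with those already found in~\cite{guillot-rebelo}, is the technical heart of the proof.
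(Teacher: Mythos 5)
Your overall strategy --- case analysis on the reduced local types, Lemma~\ref{fewstat} on the separatrices, the Fundamental Lemma~\ref{holmon}, and the matching of ramification indices against Proposition~\ref{affinecomclas} --- is the same as the paper's, and it does carry the regular-point and Siegel-domain cases. But there are genuine gaps. The most serious is that you never exclude a non-degenerate singularity in the Poincar\'e domain ($\mu/\lambda\notin\mathbf{R}^-$): every singular entry of Table~\ref{tab:norfor} has negative eigenvalue ratio, so something must rule that case out, and your ramification-index matching cannot, because there the accumulating leaf spirals into the origin without accumulating the full separatrices (e.g.\ when $\mu/\lambda\in\mathbf{R}^+\setminus\mathbf{Q}$), so there is no uniformizable affine structure on the separatrices to test. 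The paper handles this by a direct computation on the orbit $\zeta\mapsto(x_0e^{\lambda\zeta},y_0e^{\mu\zeta})$: the integral of the time form, $-e^{-(\lambda p+\mu q)\zeta}/(\lambda p+\mu q)$, is injective on no set $V_N$, and a degeneration argument ($X_\alpha=(\alpha^{\lambda p+\mu q})^{-1}h^\alpha_*X$, $\alpha\to 0$) transfers this failure of schlichtness from the linear model to general~$f$. Nothing in your outline replaces this step.

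The second gap is the saddle-node. You declare the affine-linearization mechanism ``unavailable'' there and propose to read the pole order off injectivity of the developing map; but the ramification computation only gives $\mathrm{ind}(\{y=0\},0)=-1/n$, hence $n\in\{-1,0,1\}$, and the values $n=\pm1$ are \emph{not} excluded by uniformizability of the affine structure on the strong separatrix (index $\pm1$ is admissible). The paper kills $n=\pm1$ precisely with Lemma~\ref{holmon}: trivial affine monodromy forces the holonomy along the strong separatrix to fix the infinitely many points of $\mathfrak{U}$ on a transversal, which is impossible for a map tangent to the identity with Leau--Fatou dynamics. Finally, a smaller but real error: in the case where the divisor of zeros and poles misses~$p$, you claim that non-algebraicity of $\overline{L}$ forces $p$ to be an isolated singularity; this is false (a regular point of a holomorphic foliation can perfectly well be a non-analytic accumulation point of a leaf --- think of an irrational linear foliation on a torus) and also unnecessary, since ``holomorphic with at most an isolated singularity'' already covers the nonsingular case.
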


\begin{table}
	\begin{tabular}{|c|c|c|c|c|}\hline
		name & local model & ord & ind & CS \\ \hline \hline
		
		regular & $\displaystyle y^q\del{x}$ & $q$ & $1$ & $0$ \\ \hline
		
		\multirow{4}{*} {\begin{tabular}{c}finite \\ ramification \end{tabular}} & \multirow{4}{*} {\begin{tabular}{c} $\displaystyle x^py^q\left(mx\del{x}-ny\del{y}\right)$ \\ $pm-qn=\pm1$ \end{tabular}} & \multicolumn{3}{|c|}{$x=0$}   \\ \cline{3-5}
		&    & $p$ & $n$ & $-m/n$   \\ \cline{3-5}
		&    &  \multicolumn{3}{|c|}{$y=0$} \\ \cline{3-5}
		&    & $q$ & $-m$ & $-n/m$   \\ \cline{1-5}
		
		\multirow{4}{*} {\begin{tabular}{c}infinite \\ ramification \end{tabular}} & \multirow{4}{*} {\begin{tabular}{c} $\displaystyle
				(x^py^q)^r\left(x[q+\cdots]\del{x}-y[p+\cdots]\del{y}\right)$ \\ $p,q>0$, $(p,q)=1$, $r\neq 0$ \end{tabular} } & \multicolumn{3}{|c|}{$x=0$}   \\ \cline{3-5}
		&     & $rp $ & $\infty$ & $-q/p$   \\ \cline{3-5}
		&    &  \multicolumn{3}{|c|}{$y=0$} \\ \cline{3-5}
		&    & $rq $ & $\infty$ & $-p/q$   \\ \hline
		saddle node & $\displaystyle y^q\left([x+\cdots]\del{x} +y^{k+1}\del{y}\right)$ & $q$ & $\infty$ & $0$    \\ \hline
	\end{tabular}
	\caption{The (semi) local models for proposition~\ref{pr:norfor} (up to multiplication by a nonvanishing holomorphic function). In these, $k,m,n, p,q,r\in\mathbf{Z}$, $m,n>0$ and~$k\geq 0$. } \label{tab:norfor}
\end{table}

We include in table~\ref{tab:norfor} further information concerning the invariant curves of the local models: the orders of the vector fields along them, the ramification indices of the induced affine structures and the local contributions to the self-intersection of the curves according to the Camacho-Sad theorem~\cite{camacho-sad}.

Let us start  the proof of proposition~\ref{pr:norfor}. Let~$S$ be a (not necessarily compact) surface, $X$ a reduced meromorphic vector field on~$S$, $\mathcal{F}$ the foliation it generates and~$\phi:\Omega\to S$ a univalent maximal solution of~$X$, $L=\phi(\Omega)$ and~$\widehat{\mathcal{L}}$ the curve obtained from~$L$ after adding all the analytic parabolic ends. Let~$w\in S$ be such that there exists a sequence~$\{w_i\}\subset \widehat{\mathcal{L}}$ such that~$\lim_{i\to\infty}w_i=w$ but such that no~$w_i$ belongs to a separatrix of~$\mathcal{F}$ through~$w$. Consider a coordinate system centered at~$w$. The discussion splits naturally into the following cases, according to the local nature of~$\mathcal{F}$:

\subsection{Regular point} Suppose that~$\mathcal{F}$ is regular at~$0$. Under the hypothesis that the vector field is reduced, coordinates can be chosen in a way such that $X=f(x,y)x^py^q\indel{x}$, with~$p,q\in\mathbf{Z}$ and~$f$ a holomorphic function that does not vanish at~$0$. The leaf~$\widehat{\mathcal{L}}$ intersects infinitely many times~$\{x=0\}$, and contains a sequence of points of the form~$(y_i,0)$, $\lim_{i\to \infty}y_i=0$. Within~$\widehat{\mathcal{L}}$ and in restriction to a neighborhood of~$(y_i,0)$, the vector field is~$f(x,y_i)x^p\indel{x}$. If~$p\neq 0$, $\widehat{\mathcal{L}}$ has infinitely many zeros or poles at these points, and, by lemma~\ref{fewstat}, cannot belong to~$\mathfrak{U}$. We must conclude that~$p=0$.

\subsection{Nondegenerate singularity} If~$\mathcal{F}$ has a nondegenerate singularity at~$0$ (if it can be generated by a vector field with an isolated singularity and two nonzero eigenvalues), in suitable coordinates, the vector field is of the form
\begin{equation}\label{vf}X=x^py^q\left(\lambda x f(x,y)\del{x}+ \mu y g(x,y)\del{y}\right),\end{equation}
with $\mu/\lambda \in\mathbf{C}^*\setminus \mathbf{Q}^+$, $p,q\in\mathbf{Z}$, $p\neq 0$ or~$q\neq 0$ and $f$ and $g$ holomorphic functions that do not vanish at the origin and such that~$f(0)=g(0)$. The separatrices are given by~$\{ x=0\}$ and~$\{y=0\}$.

\subsubsection{In the Siegel domain} This is the case where~$\mu/\lambda\in\mathbf{R}^-$. It is not difficult to see that, locally, \emph{every closed set invariant by the foliation, containing the origin, that is not contained in the union of the separatrices, contains both separatrices} (see~\cite[lemma~18]{camacho-rosas}). Thus, $\mathcal{L}$ contains both separatrices in its closure, which belongs to~$\mathfrak{U}$. Let us study the affine structure induced by~(\ref{vf}) on the separatrix~$\{y=0\}$. Let us suppose that~$\lambda=1$. We have, from the proof of proposition 17 in~\cite{guillot-rebelo}, that
\begin{equation}\label{ramind}\mathrm{ind}(\{y=0\},0)=-\frac{1}{p+\mu q},\;\mathrm{ind}(\{x=0\},0)=-\frac{\mu}{p+\mu q}.\end{equation}

If $p+\mu q=0$ then both ramification indices equal~$\infty$, and $\mu=-p/q\in\mathbf{Q}$ (in particular, $p$ and~$q$ have the same sign). This gives the \emph{infinite ramification} local model.

If~$p+\mu q\neq 0$, $\mathrm{ind}(\{y=0\},0)\neq \infty$ and  thus $\mathrm{ind}(\{y=0\},0)\in\mathbf{Z}^*$. In particular, $\mu\in\mathbf{Q}^-$. Let~$\mu=n/m$
with~$(m,n)=1$ (notice that~$m$ and~$n$ must have different signs) and let~$\Delta=pm+qn\in\mathbf{Z}^*$ so that~$\mathrm{ind}(\{y=0\},0)=-m/\Delta$ and~$\mathrm{ind}(\{x=0\},0)=-n/\Delta$. Since~$\Delta$ divides $m$ and~$n$, $\Delta^2=1$. If~$q<0$ and~$p\geq 0$ then if~$pm+qn=1$, we necessarily have~$n=-1$ and~$p=0$: the curve of zeros and the curve of poles do not intersect at such a point. The monodromy of the affine structure along the separatrix~$\{y=0\}$ is periodic with period~$m$.  If~$T$ is a curve transverse to the separatrix then, by  lemma~\ref{holmon},  the holonomy of~$\mathcal{F}$ along the separatrix has periodic points of periods dividing~$m$ at the points of~$\mathfrak{U}\cap T$. The existence of periodic points of these periods forces the holonomy to be periodic with period~$m$. On its turn, by the results of Mattei and Moussu~\cite[Th\'eor\`eme~2]{mattei-moussu} (or by the theorem of Martinet and Ramis~\cite{MR-res}), this implies that the foliation is linearizable, that the vector field is of the form
\begin{equation}\label{vf-lin}X=f(x,y)x^py^q\left(mx\del{x}+ n y \del{y}\right),\end{equation}
the \emph{finite ramification} local model\footnote{We can further simplify the expression of the vector field and suppose, in~(\ref{vf-lin}) that~$f\equiv 1$ (although we will not use this refinement in what follows). Let us sketch a proof of this fact following the proof of proposition~18 in~\cite{guillot-rebelo}. Consider the \emph{period function}, the function that to each orbit of~(\ref{vf-lin}) other that the separatrices associates the period of the solutions parametrizing it. The proof of~\cite[prop.~18]{guillot-rebelo} shows that this function extends as a meromorphic function to the orbit space of the vector field and that we can suppose that~$f\equiv 1$ in~(\ref{vf-lin}) if this period function is identically zero. It thus suffices to prove that, in restriction to~$\mathfrak{U}$, the period function is identically zero. Consider the path~$\gamma:[0,1]\to \mathbf{C}^2$, $\gamma(t)=(e^{2i\pi m},0)$. It winds~$m$ times around the separatrix~$y=0$, where the affine structure has  ramification index~$m$. Under the developing map of the affine structure, the image of~$\gamma$ maps to a closed curve. We can lift~$\gamma$ to a curve~$\widetilde{\gamma}:[0,1]\to \mathbf{C}^2$ taking values in a leaf~$\widetilde{L}$ within~$\mathfrak{U}$ which is close enough to~$\{y=0\}$. We can do so in a way that is locally the restriction of an isomorphism of the corresponding affine structures, like in the proof of lemma~\ref{holmon}. This implies that the developing map of the affine structure in~$\widetilde{L}$ maps the image of~$\widetilde{\gamma}$ to a closed curve. On its turn, this implies that the integral of the time form along the image of~$\widetilde{\gamma}$ in~$\widetilde{L}$ vanishes. Since~$\widetilde{L}$ belongs to~$\mathfrak{U}$, $\widetilde{\gamma}$ is a closed curve: the period function vanishes identically on~$\widetilde{L}$.}.

\subsubsection{In the Poincar\'e domain}\label{poincare} This is the case where~$\mu/\lambda \notin\mathbf{R}^-$, $\mu/\lambda \notin\mathbf{Q}^+$. By Poincar\'e's linearization theorem, the foliation is linearizable, so we may suppose that~(\ref{vf}) is actually of the form 
\begin{equation}\label{vfpoin}X=f(x,y) x^py^q\left(\lambda x\del{x}+\mu y\del{y}\right),\end{equation}
with~$f$ a holomorphic function such that~$f(0,0)=1$. If both~$p$ and~$q$ vanish, the vector field is holomorphic and has an isolated singularity at~$0$ (one of the possibilities of proposition~\ref{pr:norfor}) so we will, from now on, suppose that~$p$ and~$q$ do not vanish simultaneously. We will prove that such vector fields do not appear in the context of the proposition.

We will first address the case where~$f\equiv 1$. Up to multiplying~$X$ by a constant, we will suppose that~$\Re(\lambda)> 0$ and~$\Re(\mu)> 0$ (this is possible because we are in the Poincar\'e domain). From~(\ref{ramind}), in each one of the separatrices the affine structure has an irrational ramification index and thus the affine structure is  not uniformizable. (However, this lack of uniformizability does not give a direct obstruction for the uniformizability of the affine structures of other orbits, for some orbits may not accumulate to the separatrices: for example, if~$\lambda$ and~$\mu$ are real we have the real-valued first integral~$|x|^\mu|y|^{-\lambda}$.) Let~$V_N=\{z| \Re(\lambda z)<-N, \Re(\mu z)<-N\}$ and consider the mapping~$\rho:V_N\to\mathbf{C}^2$, $\rho(\zeta)=(x_0 e^{\lambda\zeta}, y_0e^{\mu\zeta})$, parametrizing injectively some subset of an orbit~$L$ of~$X$. For any neighborhood~$U$ of the origin of~$\mathbf{C}^2$ there exists~$N>0$ such that~$\rho(V_N)$ is in~$L\cap U$. In restriction to~$\rho(V_N)$, the vector field is, up to a constant factor,
$e^{(\lambda p+\mu q)\zeta}\indel{\zeta}$ (notice that~$\lambda p+\mu q\neq 0$). Integrating the time form into the \emph{time  function}~$T$, we find that 
\[T(\zeta)= \int e^{-(\lambda p+\mu q)\zeta} d\zeta  =-\frac{e^{-(\lambda p+\mu q)\zeta}}{\lambda p+\mu q}.\]
There is no value of~$N$ such that the restriction of~$T$ to $V_N$ is injective. We conclude that \emph{in~$\mathbf{C}^2\setminus\{0\}$, there is no orbit of~$X$ carrying a uniformizable affine structure.}

Let us now proceed to the general case. Let~$B=\{(x,y); |x|^2+|y|^2<1\}\subset\mathbf{C}^2$, $B^*=B\setminus\{0\}$ and suppose that~$f$ is defined in~$B$. Consider, for~$\alpha\in(0,1]$, the transformations~$h^\alpha(x,y)=(x/\alpha,y/\alpha)$. Notice that~$h^\alpha$ preserves the foliation induced by~$X$. Consider the vector field
\[X_\alpha =(\alpha^{p+q})^{-1}h^\alpha_* X =f(\alpha  x,\alpha y)x^py^q\left(\lambda x\del{x}+\mu y\del{y}\right).\]
Let~$X_0=\lim_{\alpha\to 0}X_\alpha$ (the limit exists because~$\Re(\lambda)>0$ and~$\Re(\mu)>0$). When~$\alpha\neq 0$, the vector field~$X_\alpha$ is conjugated to~$X$, up to a constant multiple. When~$\alpha=0$, it corresponds to~$f\equiv 1$ in~(\ref{vfpoin}). The vector field varies continuously with~$\alpha$. We may define, in the real manifold-with-boundary~$[0,1]\times B^*$, the real two-dimensional foliation~$\mathcal{G}$ given in~$\{\alpha\}\times B^*$ by the restriction of the foliation induced by~$X_\alpha$. Each leaf of~$\mathcal{G}$ carries a natural affine structure, which varies continuously with~$\alpha$. An argument analogous to the one used to prove proposition~\ref{unifisclosed} shows that the set of points belonging to leaves of~$\mathcal{G}$ which do not have a uniformizable affine structure is open.\footnote{We could also consider the real and imaginary parts of~$X$ as real commuting vector fields on a real space of dimension four and use, for instance, the methods of the proof of Theorem~A in~\cite{arroyo-guillot}.} We have already proved that there is no leaf of~$X_0$ having a  uniformizable affine structure. There is thus an open subset~$W$ of~$\{0\}\times B^*$ within~$[0,1]\times B$ saturated by~$\mathcal{G}$ and consisting of  leaves that do not carry a uniformizable affine structure.  The set~$W$ contains, for~$S^3=\{(x,y); |x|^2+|y|^2=1/2\}$ and some~$\epsilon>0$, the set~$[0,\epsilon]\times S^3$.  Since~$X_\alpha$ belongs to the Poincar\'e domain, the saturation of~$S^3$ by~$X_\alpha$ is~$B^*$. This shows that for~$\delta<\epsilon$ there is no leaf of~$X_\delta$ having a uniformizable affine structure.  We thus prove that vector fields in the Poincar\'e domain satisfy the hypothesis of proposition~\ref{pr:norfor} only when they are holomorphic and have an isolated singularity.

\subsection{Saddle-node}\label{sec:saddle-node} If~$w$ is a saddle-node singularity of~$\mathcal{F}$, in suitable coordinates
\[X=h(x,y)(x-g(y))^ny^q\left(f(x,y)\del{x}+y^{k+1}\del{y}\right),\]
with~$k\geq 1$, $f$ a holomorphic function such that~$f(x,y)=x+\cdots$,  $g$ a holomorphic function such that $g(0)=0$, $h$ a nonvanishing holomorphic function and $q,n\in\mathbf{Z}$. The term~$(x-g)^n$ accounts for a curve of zeros or poles transverse to the strong separatrix~$\{y=0\}$. We claim that~$n=0$ (this would give the sought local model). As remarked in~\cite[lemma~19]{camacho-rosas}, the description of the saddle-node singularity by sectorial normalizations in~\cite{HKM} implies that \emph{with the exception of the central manifold (when it converges), every integral curve accumulates to the strong separatrix of the saddle-node}. Hence, $\mathcal{L}$ accumulates to the strong separatrix~$\{y=0\}$, which belongs to~$\mathfrak{U}$. For the affine structure along the latter, from the proof of proposition~17 in~\cite{guillot-rebelo}, $\mathrm{ind}(\{y=0\},0)=-1/n$, and thus~$n\in\{-1,0,1\}$. If~$n=\pm 1$ then, by lemma~\ref{holmon}, the holonomy has fixed points at the points of~$\mathfrak{U}$. However, the holonomy of the strong separatrix is tangent to the identity and has~\cite[section~5.3.3]{loray} a Leau-Fatou flower dynamics: it cannot have such a set of fixed points. This proves that~$n=0$, giving the sought normal form. In particular, \emph{the central manifold, if convergent, does not belong to the curve of zeros and poles}.

This finishes the proof of proposition~\ref{pr:norfor}.

\section{In the presence of nonentire solutions}\label{sec:combi}

We will now prove the Main Theorem in the case where $\phi:\Omega\to S$ is a univalent maximal solution of a meromorphic vector field on the algebraic compact complex surface~$S$ that does not extend to~$\mathbf{C}$ (one of the two cases where proposition~\ref{dichotomy} leads). Mostly, the proof follows that of Theorem~B in~\cite{guillot-rebelo}, and many of its lines will only be sketched. The aim is, first, to describe a connected component of the divisor of poles where the image of the solution accumulates (theorem~\ref{thamabis} below), and then to use this description along with global information on the surface to produce a fibration adapted to the vector field.

\subsection*{The divisor of poles} We have the following result, direct analogue of Theorem~A in~\cite{guillot-rebelo}:

\begin{theorem}\label{thamabis} Let~$S$ be a complex surface, $X$ a meromorphic vector field on~$S$, $D\subset S$ a compact connected component of the curve of poles of~$X$, $\Omega\subset\mathbf{C}$, and~$\phi:\Omega\to S$ a univalent maximal solution of~$X$ that accumulates to~$D$ in a nonanalytic way. Then~$D$ is invariant by the foliation induced by~$\mathcal{F}$ and, up to a birational transformation, either
	
	\begin{itemize}
		\item $D$ can be collapsed to a point where~$X$ becomes holomorphic; 
		\item $D$ is a rational curve of vanishing self-intersection;   
		\item $D$ is a nonsingular elliptic curve of vanishing self-intersection or, more generally, supports a divisor  of elliptic fiber type. 
	\end{itemize}

\end{theorem}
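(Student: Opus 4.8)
The plan is to follow closely the proof of Theorem~A of \cite{guillot-rebelo}, using Proposition~\ref{pr:norfor} as the local input that, in the present single-orbit setting, plays the role that the semicompleteness hypothesis played there. The whole difficulty is to pass from the purely local normal forms of Table~\ref{tab:norfor} to a global statement about the component~$D$ of the pole divisor, and this passage is governed by the classification of uniformizable affine structures on compact curves together with the Camacho--Sad index theorem.

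First I would pass to a reduced model. After a locally finite sequence of blowups (Seidenberg's theorem and the normal-crossings normalization of Section~\ref{bir}) I may assume that~$X$ is reduced, so that~$D=\bigcup_i D_i$ is a normal-crossings configuration of compact curves and, near every point of~$D$, either~$X$ is holomorphic with at most an isolated singularity or~$X$ is one of the models of Table~\ref{tab:norfor}. Since~$\phi$ accumulates~$D$ in a non-algebraic way, the hypotheses of Proposition~\ref{pr:norfor} hold at the points of~$D$ that~$L=\phi(\Omega)$ approaches, and in each of the relevant models the curve of poles is~$\mathcal{F}$-invariant; hence the components~$D_i$ that~$L$ accumulates are, away from~$\mathrm{Sing}(\mathcal{F})$, leaves of~$\mathcal{F}$. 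Because the uniformizable locus~$\mathfrak{U}$ is closed and~$L\subset\mathfrak{U}$ accumulates these components, their regular points belong to~$\mathfrak{U}$; therefore each such~$D_i$ carries the leafwise affine structure with singularities induced by~$X$, and this structure is \emph{uniformizable}.

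I would then globalize. Proposition~\ref{affinecomclas} lists the uniformizable affine structures on a compact curve, organized by their singular (ramification) data. From the \emph{ind} column of Table~\ref{tab:norfor} I read the ramification index at each singular point of~$\mathcal{F}$ on~$D_i$ and at each crossing with another component: finite ramification contributes an orbifold point of finite index, while the infinite-ramification and saddle-node models contribute a puncture of index~$\infty$. Comparing these patterns with the admissible lists of Proposition~\ref{affinecomclas} restricts the type of each~$D_i$ to rational-orbifold, parabolic, or elliptic. Simultaneously, the Camacho--Sad index theorem~\cite{camacho-sad} gives~$D_i\cdot D_i=\sum_p\mathrm{CS}(D_i,p)$ with the values of the \emph{CS} column. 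Matching both computations against these classes yields the trichotomy: a rational-orbifold structure corresponds to a negative-definite configuration that contracts to a quotient singularity over which~$X$ becomes holomorphic (first bullet); a parabolic structure forces~$D_i$ rational with~$D_i\cdot D_i=0$ (second bullet); and an elliptic structure produces either a smooth elliptic curve of vanishing self-intersection or, when~$D$ is reducible, a divisor~$Z$ of elliptic fiber type (third bullet).

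The main obstacle is twofold. First I must exclude the hyperbolic tori~$\mathbf{C}^*/G$ of Proposition~\ref{affinecomclas}: their monodromy contains a hyperbolic element~$z\mapsto\lambda z$ with~$|\lambda|\neq1$, whereas every local model of Table~\ref{tab:norfor} produces monodromy that is either periodic (finite ramification, via the period-$m$ argument together with Lemma~\ref{holmon}) or parabolic (the index-$\infty$ models), so no hyperbolic holonomy can arise and this case is ruled out. Second, and more delicate, is the combinatorial bookkeeping when~$D$ is reducible: I must check that the only normal-crossings assemblies of the local models compatible with a single uniformizable affine structure are precisely the contractible configurations and the Kodaira divisors of elliptic fiber type, and that the self-intersection indeed vanishes in every non-contractible case — this is where the saddle-node versus infinite-ramification distinction (same index~$\infty$, different \emph{CS}) must be tracked carefully. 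Here I would transcribe, adapting to the present models, the corresponding arguments of \cite{guillot-rebelo}, whose local building blocks coincide with ours.
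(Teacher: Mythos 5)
Your proposal follows the paper's proof essentially verbatim: reduce the vector field, invoke Proposition~\ref{pr:norfor} for the local models, use the closedness of~$\mathfrak{U}$ to endow the invariant components with a uniformizable affine structure, and match the classification of Proposition~\ref{affinecomclas} against the Camacho--Sad indices and vanishing orders, deferring the combinatorial bookkeeping to Section~6 of~\cite{guillot-rebelo} exactly as the paper does. The only (inessential) slip is your treatment of the hyperbolic tori: they need not be excluded, since as curves they are elliptic and, carrying no singularities of their affine structure, have vanishing self-intersection by Camacho--Sad and so fall under the third alternative; moreover, the global monodromy of an affine structure on a compact curve is not generated by the local monodromies at its singular points, so your exclusion argument would not apply precisely in the everywhere non-singular case where these tori occur.
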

The theorem concerns only a neighborhood of~$D$ and, for example, does not require the compactness of~$S$. For the proof, after reducing the vector field (as defined in section~\ref{bir}), the vector field is locally described by the local models of proposition~\ref{pr:norfor}. Each   irreducible component of~$D$  is invariant under~$\mathcal{F}$. It has a self-intersection number; the vector field has some vanishing order along it, and the affine structure it carries is one of those listed in proposition~\ref{affinecomclas}. All of these objects come with some numerical invariants that are not independent. The above theorem follows from investigating the combinatorial relations between all these data while taking into account the fact that it may be possible to blow down a curve without affecting the reduced nature of the vector field. The associated combinatorics are the same as those in the proof of Theorem~A in~\cite[section~6]{guillot-rebelo}, which can be used word by word to give a proof the above result.

The combinatorial analysis leading to theorem~\ref{thamabis} gives also a detailed description of the foliation in the neighborhood of the divisor in the second and third cases.  For instance, if~$D$ is a nonsingular elliptic curve or supports a divisor of elliptic fiber type, $\mathcal{F}$ does not have singularities at the regular points of~$D$ and, for a singular point (normal crossing)~$p$ of~$D$, the separatrices of~$\mathcal{F}$ through~$p$ are contained in~$D$; if~$D$ is a rational curve of vanishing self-intersection,  either~$\mathcal{F}$ has two singularities at~$D$, each one being a saddle-node whose strong separatrix is contained in~$D$, or~$\mathcal{F}$ has only one singularity at~$D$, having a nilpotent linear part, whose resolution has the combinatorics of the nilpotent special fiber of a Riccati foliation (section~\ref{subsec-ric}).

\subsection*{Constructing the fibrations}

Having at hand divisors that are natural candidates to be fibers of the fibration announced in the Main Theorem, one must now build the fibration out of these divisors: one must show that there exists a (rational or elliptic) fibration on~$S$ having~$D$ (or, more precisely, the divisor of vanishing self-intersection~$Z$ supported on~$D$) as a fiber.

For the rational curves of vanishing self-intersection in theorem~\ref{thamabis}, $D$ has a neighborhood biholomorphic to a product~$\Delta\times \mathbf{P}^1$ (this is Savel'ev's theorem~\cite{savelev}), and we thus have a local rational fibration. This local fibration may me extended to a global one (for instance, through theorem~2 in~\cite{jvp-divisors}). Since  the poles of~$X$ are contained in a fiber in a neighborhood of~$D$, the vector field preserves the fibration (by the maximum principle, the flow of~$X$ maps fibers to fibers). The foliation induced by~$X$ is a Riccati one   with respect to the rational fibration. (In this case the algebraicity hypothesis on~$S$ is superfluous and we only need it to be a compact surface.)

The situation is much more subtle for the divisors of elliptic fiber type since, for example, there exist algebraic surfaces having smooth elliptic curves of vanishing self-intersection which are not the fibers of a fibration (see~\cite{sad}). Although, as discussed in~\cite[section~7]{guillot-rebelo}, our problem essentially one of a semilocal nature (it suffices to prove that some neighborhood of~$D$ has nonconstant holomorphic functions), we will mostly resort to global arguments to establish the existence of a fibration. In the case where~$S$ is algebraic, we will prove the Main Theorem in the setting of the third possibility of theorem~\ref{thamabis} by invoking some of Brunella's and McQuillan's results. As an outcome of the classification of holomorphic foliations on algebraic surfaces, which we briefly presented in section~\ref{sec:bmm}, Brunella proved the following~\cite[ch.~9, cor.~2]{brunella-birational}.

\begin{theorem}[Brunella]\label{brunella-elcurve} Let~$S$ be a compact complex algebraic surface, $\mathcal{F}$ a holomorphic foliation on~$S$, $Z$ a divisor of elliptic type on~$S$ whose support is invariant by~$\mathcal{F}$. Then either:
	\begin{enumerate}
		\item   $Z$ is a fiber of an elliptic fibration and either~$\mathcal{F}$ coincides with it (and has a first integral) or is turbulent with respect to it; or
		\item \label{casebrukod0}   there exists an elliptic curve~$E$ and a representation~$\rho:\pi_1(E)\to\mathrm{Aut}(\mathbf{P}^1)$ such that for the associated suspension~$(M,\mathcal{G})$ there is a birational map~$\eta:M'\dashrightarrow M$ and a ramified cover~$r:M'\to S$  such that~$\eta_*r^*\mathcal{F}=\mathcal{G}$.
	\end{enumerate}
\end{theorem}

In the first case, the Main Theorem follows; in the second, it is a consequence of the following result:

\begin{proposition}\label{prop:susp} Let~$X$ be a meromorphic vector field on the surface~$S$ inducing the foliation~$\mathcal{F}$. Suppose that~$X$ has a univalent maximal solution with Zariski-dense  image, and that, up to a birational map and a ramified cover, $\mathcal{F}$ is obtained by suspension over an elliptic curve, this is, that we are in the situation of item~(\ref{casebrukod0}) in theorem~\ref{brunella-elcurve}.  Then, up to a birational transformation of~$S$, either~$X$ is holomorphic or there is  a fibration on~$S$ preserved by~$X$. 
\end{proposition}

In it, we do not assume that the univalent maximal solution of~$X$ approaches the locus of poles of~$X$ in any given way or make hypothesis about the nature of the domain where the solution is defined. The rest of the section will be dedicated to the proof of this proposition.

Let~$E=\mathbf{C}/\Lambda$ and consider a representation~$\rho:\Lambda\to\mathrm{Aut}(\mathbf{P}^1)$. Let~$M$ be the suspension, $\mathcal{G}$ the Riccati foliation on~$M$. The representation~$\rho$ has infinite image, for otherwise~$\mathcal{G}$, and thus~$\mathcal{F}$, would have a first integral. Let~$V$ the meromorphic vector field on~$M$ coming from~$X$ and~$\widetilde{V}$ be the one  on~$\mathbf{C}\times\mathbf{P}^1$ that induces the vector field~$V$ on~$M$. Let~$(t,[z:1])$ be coordinates in~$\mathbf{C}\times\mathbf{P}^1$. In these,  
\begin{equation}\widetilde{V}=\label{campoquito}H(z,t)\del{t}=c(t)\frac{z^n+a_{n-1}(t)z^{n-1}+\cdots+a_0(t)}{z^m+b_{m-1}(t)z^{m-1}+\cdots+b_0(z)}\del{t},\end{equation}
with~$a_i$, $b_i$ and~$c$ meromorphic functions.  

\begin{remark}\label{otherpoles} The poles of~$b_i$ (resp. of~$a_i$) correspond to values of~$t$ where some of the poles (resp. zeros) of~$\widetilde{V}$ escape to infinity, components of the curve of poles  (resp. zeros) of~$\widetilde{V}$ which  intersect the curve~$\{z=\infty\}$ but which are not fibers. 
\end{remark}

The ramified cover~$r$  is a Galois cyclic one (see~\cite[fact~IV.3.3]{mcquillan-canonical} and the comments in~\cite[section~2.2]{jvp-height}), and thus associated to a finite-order birational  automorphism of~$(M,\mathcal{G})$. Any such automorphism~$\sigma$ preserves, by the maximum principle, the fibration (maps fibers to fibers) and acts upon the base~$E$. If it does not have a fixed point on~$E$,  the quotient of~$M$ under~$\sigma$ would still be a suspension over an elliptic curve, so we may restrict to the case where~$\sigma$ has fixed points on~$E$. The cyclic automorphism of~$M$  comes from one of~$\mathbf{C}\times\mathbf{P}^1$ that normalizes the group of transformations of the suspension and preserves~$\widetilde{V}$. If~$\widetilde{V}$ is holomorphic, so will be, up a birational transformation, $X$, and we may thus suppose that~$\widetilde{V}$ is strictly meromorphic.

Let~$\phi:\Omega\to S$ be a maximal solution of~$X$ and let~$L$ be its image. Let~$\widetilde{L}$ be an integral curve of~$\mathcal{G}$ (with its analytic parabolic ends compactified) that maps to~$L$ through~$r$ and~$\eta$ (it is a finite ramified cover of~$L$). 
The solution of~$V$ parametrizing~$\widetilde{L}$ may no longer be single-valued but has, nevertheless, only finitely many determinations: by lemma~\ref{fewstat}, in restriction to~$\widetilde{L}$, $\widetilde{V}$ may only have finitely many zeros.  In particular, the zeros of~$H$ cannot contain vertical curves~$\{t=t_0\}$  for, otherwise, on every transcendent leaf (on every integral curve associated to an infinite orbit of the monodromy), $X$  would have infinitely many zeros. The  vertical components of the curve of zeros correspond to the zeros of~$c$ in~(\ref{campoquito}), and we must conclude that~$c$ has no zeros.   In principle, $\widetilde{L}$ may intersect infinitely many times an irreducible component of the curve of poles of~$V$ (remark~\ref{rem:imp}).

Since~$\rho$ has infinite image it belongs, up to conjugation, either to the group of translations~$\{z\mapsto z+\beta\}$ or to the group of homotheties~$\{z\mapsto \alpha z\}$. We will deal separately with these cases.

\subsubsection*{Case 1}
We begin with the case where the image of~$\rho$ belongs to the group of translations. The action of~$\lambda\in\Lambda$ is given by
\begin{equation}\label{equivariant-vf} (z,t)\mapsto(z+\rho(\lambda),t+\lambda).\end{equation}
Under the monodromy, all the points accumulate to the point at infinity. Hence, all the leaves of~$\mathcal{G}$ other than~$\{z=\infty\}$ accumulate to this one. This implies that~$a_i$ is holomorphic, for otherwise (remark~\ref{otherpoles}) there is a component~$P$ of the curve of zeros that intersects~$\{z=\infty\}$, and any other integral curve of~$\mathcal{G}$  would intersect~$P$ infinitely many times as it accumulates to~$\{z=\infty\}$, acquiring infinitely many zeros in the process.

Let us prove that~$b_i$ is holomorphic as well, starting with the case where the ramified cover~$r$ is trivial. If the curve of poles of~$\widetilde{V}$ intersects~$\{z=\infty\}$ then~$\widetilde{L}$, which accumulates to~$\{z=\infty\}$, intersects the curve of poles, but this would contradict lemma~\ref{fewstat}. In this case, $b_i$ is holomorphic for every~$i$. Let us establish this fact in the case where the ramified cover~$r$ is not trivial. As before, if~$b_i$ were not holomorphic for every~$i$, $\widetilde{L}$ would intersect infinitely many times a  component of  the curve of poles of~$\widetilde{V}$ which is not a fiber (remark~\ref{otherpoles}). This component must be pointwise fixed by~$\sigma$ ($r$ should ramify along it), in order for the vector field induced on the quotient of~$\widetilde{L}$ under~$\sigma$ to be holomorphic. If~$\sigma$ fixes a point, it also fixes its fiber, and does not have fixed points close to the preserved fiber which are not on it: all the components of the curve of poles of~$\widetilde{V}$ which intersect~$\{z=\infty\}$ are fibers. This contradiction shows that~$b_i$ is holomorphic.

The condition for~$\widetilde{V}$ to be invariant is that its coefficient~$H$ is invariant under~(\ref{equivariant-vf}), that for every~$\lambda\in\Lambda$,
\[c(t)\frac{z^n+\cdots}{z^m+\cdots}=c(t+\lambda)\frac{(z+\rho(\lambda))^n+\cdots}{(z+\rho(\lambda))^m+\cdots}=c(t+\lambda)\frac{z^n+\cdots}{z^m+\cdots},\]
which implies that~$c$ is elliptic, and, since it has no zeros, constant. If~$m=0$, $\widetilde{V}$ is holomorphic, a case we had already discarded, so we may suppose that~$m\neq 0$. Both the numerator and the denominator of~$H$ must be invariant under~(\ref{equivariant-vf}). For the denominator,  
\begin{eqnarray*}
	z^m+b_{m-1}(t)z^{m-1}+\cdots & = & (z+\rho(\lambda))^m+b_{m-1}(t+\lambda)(z+\rho(\lambda))^{m-1}+\cdots \\ & =  & z^m+(m\rho(\lambda)+b_{m-1}(t+\lambda))z^{m-1}+\cdots. 
\end{eqnarray*}
Hence, $b_{m-1}(t+\lambda)=b_{m-1}(t)-m\rho(\lambda)$. Differentiating,
$b_{m-1}'(t+\lambda)=b_{m-1}'(t)$. Thus, $b_{m-1}'$ is elliptic and, since it is holomorphic, constant, and~$b_{m-1}(z)=\alpha z+\beta$, for some~$\alpha,\beta\in\mathbf{C}$. This implies that~$\rho(\lambda)=-(\alpha/m)\lambda$ (since~$\rho$ has infinite image, $\alpha\neq 0$). In particular,  $\rho$  is actually induced by a $\mathbf{C}$-linear map. Up to a linear change of coordinates in~$z$, we may suppose that~$\rho(\lambda)=\lambda$.  We have, for~$\zeta(z,t)=z-t$,
\[\widetilde{V}=c_0\frac{\zeta^n+\widehat{a}_{n-1}(t)\zeta^{n-1}+\cdots+\widehat{a}_0(t)}{\zeta^m+\widehat{b}_{m-1}(t)\zeta^{m-1}+\cdots+\widehat{b}_0(t)}\del{t}=\widehat{H}(\zeta,t)\del{t},\]
for some meromorphic functions~$\widehat{a}_i$ and~$\widehat{b}_i$. In this form, since~$\zeta$ is invariant by~(\ref{equivariant-vf}), the condition for the invariance of~$\widetilde{V}$ is that~$\widehat{a}_i$ and~$\widehat{b}_i$ are elliptic functions for every~$i$. However, our previous reasoning implies that they do not have poles, that they are constant,  and thus~$\widetilde{V}=\widehat{H}(\zeta)\indel{t}$.

In the case where the ramified cover~$r$ is trivial, this is sufficient to conclude: the mapping onto~$\mathbf{P}^1$ given by~$(z,w)\mapsto \zeta$ is invariant under the action of~$\Lambda$ and is well-defined on~$M$, where it gives an elliptic fibration. It maps~$V$ to the vector field~$- \widehat{H}(\zeta)\indel{\zeta}$ and  the elliptic fibration is preserved. In the case where the ramified cover~$r$ is not trivial,  suppose that~$\sigma$ preserves the fiber~$t=0$, that it is of the form~$(z,t)\mapsto (g(t,z),\mu t)$ with~$\mu ^p=1$ ($\mu \neq 1$, $p\in\{2,3,4,6\}$).  Since~$\sigma$ must preserve~$\{z=\infty\}$, it is, up to a translation in~$z$, of the form~$(z,t)\mapsto(\nu z, \mu  t)$. Since~$\sigma$ preserves~$\widetilde{V}$, $\sigma$ normalizes the transformations of the suspension. The conjugation of~(\ref{equivariant-vf}) under~$\sigma$ is~$(z,t)\mapsto(z+\nu\lambda,t+\mu \lambda)$, and since this transformation must belong to the group of the suspension for every~$\lambda\in\Lambda$, $\nu=\mu $. Since~$\mu \widehat{H}(\zeta)=\widehat{H}(\mu \zeta)$, then, by L\"uroth's theorem, $\widehat{H}(\zeta)=-\zeta R (\zeta^p)$ for some rational function~$R$.  The projection onto~$\mathbf{P}^1$ given by~$\xi=\zeta^p$  maps~$\widetilde{V}$ to the vector field~$p\xi R(\xi)\indel{\xi}$. It is well-defined in~$M$ and in its quotient under the cyclic automorphism and gives an elliptic fibration from~$S$ onto~$\mathbf{P}^1$ preserved by~$X$, proving the proposition in this case.   

\subsubsection*{Case 2}
We   now consider the case where~$\rho$ takes values in~$\mathbf{C}^*$. Either~$\rho$ contains a hyperbolic element or~$|\rho(\lambda)|=1$ for every~$\lambda$ and, in particular, its image, which is infinite, is not discrete. The action of~$\Lambda$ on~$\mathbf{C}\times\mathbf{P}^1$ is given by 
\begin{equation}\label{trans1}(z,t)  \mapsto (\rho(\lambda)z,t+\lambda).\end{equation}

\paragraph*{Case 2a} If there is some~$\lambda\in\Lambda$ such that~$|\rho(\lambda)|\neq 1$, all the integral curves of~$\mathcal{G}$ other than~$\{z=0\}$ and~$\{z=\infty\}$ accumulate to~$\{z=\infty\}$. The arguments of case~1 allow to conclude that~$a_i$ and~$b_i$ are holomorphic for every~$i$.

In order for~$\widetilde{V}$ to be invariant under~(\ref{trans1}), $H$ must be invariant as well and thus
\begin{multline}\label{camporaro}c(t+\lambda)\frac{\rho^n(\lambda) z^n+a_{n-1}(t+\lambda)\rho^{n-1}(\lambda)z^{n-1}+\cdots+a_0(t+\lambda)}{\rho^m(\lambda)z^m+b_{m-1}(t+\lambda)\rho^{m-1}(\lambda)z^{m-1}+\cdots+b_0(t+\lambda)}= \\ =c(t)\frac{z^n+a_{n-1}(t)z^{n-1}+\cdots+a_0(t)}{z^m+b_{m-1}(t)z^{m-1}+\cdots+b_0(t)}.\end{multline}
In particular, $c(z+\lambda)=\rho(\lambda)^{m-n}c(z)$. This implies that~$c'/c$ is an elliptic  function. Its poles correspond to the poles of~$c$, for~$c$ has no zeros;  since at each pole the residue is minus the order of the corresponding pole of~$c$, this residue is strictly negative. Since the sum of the residues of an elliptic function over the poles belonging to a fundamental domain vanishes, we conclude that the elliptic function~$c'/c$ has actually no poles at all, that it is constant: $c(t)=c_0e^{\delta t}$ for some~$c_0,\delta\in\mathbf{C}$, and $\rho^{m-n}(\lambda)=e^{\delta\lambda}$. From~(\ref{camporaro}), we have that $a_i(z+\lambda)=a_i(z)\rho^{n-i}(\lambda)$. This implies that either~$a_i\equiv 0$ or that~$a_i'/a_i$ is elliptic. In the second case, the poles of the elliptic function~$a_i'/a_i$ correspond to the zeros of the holomorphic function~$a_i$ and all of its  residues are positive, which implies that it is constant. Hence, $a_i(z)=A_i e^{\alpha_i z}$ for some~$\alpha_i,A_i$ in~$\mathbf{C}$ and~$\rho^{n-i}(\lambda)=e^{\alpha_i \lambda}$. Likewise, $b_i(z)=B_i e^{\beta_i z}$ and~$\rho^{m-i}(\lambda)=e^{\beta_i \lambda}$. The known powers of~$\rho(\lambda)$  are 
\[J=\{n-i|a_i\not\equiv 0\}\cup \{m-i| b_i\not\equiv 0\} \cup(\{m-n\}\setminus\{0\}).\]
If~$J=\emptyset$, $H(z,t)=c(t)\indel{t}$, and the invariance condition implies that~$c$ is elliptic and, since it does not have zeros, constant. But this implies that~$\widetilde{V}$ is holomorphic, a case we had already eliminated from  our study. Suppose thus that~$J\neq\emptyset$ and let~$d$ be the greatest common divisor of the elements of~$J$. Since~$d=\sum_{j\in J}l(j) j$ for some~$l(j)\in\mathbf{Z}$ and since we know that for every~$j\in J$ there is some~$\gamma(j)\in\mathbf{C}$ such that~$\rho^j(\lambda)=e^{\gamma(j) \lambda}$, for~$\kappa=d^{-1}\sum_{j\in J}l(j) \gamma(j)$, 
\[\rho^d(\lambda)=\rho^{\sum l(j) j}(\lambda)=e^{\lambda \sum l(j) \gamma(j)}=e^{\kappa d\lambda}.\]
In this way, for every~$\lambda\in\Lambda$ we have that~$e^{\alpha_i \lambda}=e^{\kappa(n-i)\lambda}$ and, in particular, for~$\lambda\in\Lambda\setminus\{0\}$, $\alpha_i\equiv\kappa(n-i) \mod 2i\pi/\lambda$. Since~$\Lambda$ is a lattice and contains two $\mathbf{R}$-linearly independent elements, $\alpha_i=\kappa(n-i)$. Similarly, $\beta_i=\kappa(m-i)$ and~$\delta=\kappa(m-n)$. Thus,
\[H(z,t)=c_0\frac{e^{-\kappa n  t}z^n+A_{n-1}e^{-\kappa(n-1)  t}z^{n-1}+\cdots+A_0}{e^{-\kappa m t}z^m+B_{m-1}e^{-\kappa(m-1)  t}z^{m-1}+\cdots+B_0},\]
this is, $H$ is a rational function of~$\zeta=ze^{-\kappa t}$, $H=R(\zeta)$.

This is enough to conclude in the case where the ramified cover~$r$ is trivial. The projection onto~$\mathbf{P}^1$ given by~$(z,t)\mapsto \zeta$ is invariant under~(\ref{trans1}) and maps~$\widetilde{V}$ to~$-\kappa\zeta R(\zeta)\indel{\zeta}$. The fibration, which descends to~$M$, is an elliptic one and is preserved by the vector field. In the case were the cyclic group of symmetries is not trivial, the holomorphic symmetries of~$(M,\mathcal{G})$ are induced in~$\mathbf{C}\times\mathbf{P}^1$ by transformations that are either of the form~$(z,t)\mapsto (\nu z,\mu  t)$,  or of the form~$(z,t)\mapsto (1/z,\mu  t)$, $\mu\neq 1$. In the first case, if~$\widetilde{V}$ is preserved, $\mu  R(ze^{-\kappa t})= R(\nu ze^{-\kappa \mu  t})$, but this is impossible, for it implies that~$\mu = 1$. In the second case, $\mu  R(ze^{-\kappa t})= R(z^{-1}e^{-\kappa \mu  t})$, and thus~$\mu =-1$;   the invariance condition on~$R$ becomes~$R(\zeta)=-R(1/\zeta)$ and thus, by L\"uroth's theorem, $R(\zeta)=(\zeta-1/\zeta)Q(\zeta+1/\zeta)$ for some rational function~$Q$. On~$S$, the mapping induced by~$\xi(\zeta)= \zeta+1/\zeta$ is well-defined and induces an elliptic fibration preserved by~$X$.  
\paragraph*{Case 2b}  If~$|\rho(\lambda)|= 1$ for every~$\lambda$ then the function~$\psi(z)=\log(|z|)$ on~$\mathbf{C}\times\mathbf{C}^*$ gives a well-defined function on the subset~$M_0$ of~$M$ which is the complement of the elliptic curves~$E_0$ and~$E_\infty$ coming from~$\{z=0\}$ and~$\{z=\infty\}$. Let~$P\subset M$ be an irreducible component of the curve of zeros or poles of~$V$ that is not contained in~$E_0$ or~$E_\infty$ and let $P_0=P\cap M_0$. The restriction of~$\psi$ to~$P_0$ is harmonic. It is not constant, for this would imply that the closure of the algebraic curve~$P$ is a level threefold of~$\psi$, and takes thus all real values. But this implies that~$P_0$  intersects all leaves of~$\mathcal{G}|_{M_0}$, in particular~$L$, infinitely many times. The curve~$P$ cannot be a curve of zeros, for this  would contradict lemma~\ref{fewstat}; if it is a curve of poles, it must, by the previous arguments, belong to a fiber. We must conclude that~$V=c(t)z^{s}\indel{t}$, $s\in\mathbf{Z}$. The arguments in case~2a imply that~$c(t)=c_0e^{\delta t}$ and that~$\rho^s(\lambda)=e^{\delta\lambda}$. But since~$\Lambda$ is a lattice and contains two~$\mathbf{R}$-linearly independent elements and~$|\rho(\lambda)|=1$ for every~$\lambda\in\Lambda$, $s=0$. In this case, $c$ is elliptic without zeros, thus constant, and~$\widetilde{V}$ is holomorphic.\\

This finishes the proof of proposition~\ref{prop:susp}, which completes the proof of the Main Theorem in the case where there is a univalent maximal solution that accumulates the locus of poles in a non-analytic way (in particular, in the case where a solution cannot be extended  to a map defined in~$\mathbf{C}$).

\begin{remark}
	Following the arguments in~\cite{nk}, involving classification results by Kodaira and Enoki, one may show from theorem~\ref{thamabis} that, under the hypothesis that there is a univalent maximal solution that cannot be extended  to all of~$\mathbf{C}$, the conclusion of the Main Theorem is also valid in the non-K\"ahler setting.
\end{remark}

\section{Entire solutions}\label{sec:entire}
Let~$S$ be a complex compact algebraic surface, $X$ a meromorphic vector field on~$S$ and~$\phi:\mathbf{C}\to S$ a map which is almost everywhere a solution to~$X$ and whose image is not contained in any proper analytic subset. We will further suppose that all the solutions of~$X$ in~$\mathfrak{U}$, the uniformizable locus, can be extended to entire maps. It is in this setting that we will prove the Main Theorem. McQuillan showed that foliations on algebraic surfaces admitting tangent entire curves are very special:

\begin{theorem}[McQuillan~\cite{mcquillan-ncmt}, \cite{mcquillan-canonical}]\label{mq} Let~$S$ be an algebraic surface and~$\mathcal{F}$ a holomorphic foliation on~$S$. Let~$\phi:\mathbf{C}\to S$ be a map tangent to the foliation having Zariski-dense image. Then the foliation is either Riccati or turbulent, or, up to a birational transformation, $\mathcal{F}$ is induced by a holomorphic vector field on a ramified cover of~$S$.
\end{theorem}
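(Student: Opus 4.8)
This is McQuillan's theorem, and the proof I would give is the one that runs through the Mendes--Brunella--McQuillan classification of foliations on algebraic surfaces by their Kodaira dimension $\mathrm{kod}(\mathcal{F})\in\{-\infty,0,1,2\}$. The plan is to split the argument into two quite unequal halves: an analytic half showing that the existence of a Zariski-dense entire curve tangent to $\mathcal{F}$ forbids $\mathcal{F}$ from being of general type (that is, forces $\mathrm{kod}(\mathcal{F})\leq 1$), and a combinatorial half that reads off the trichotomy from the classification of the non--general-type foliations. Throughout I would first reduce $\mathcal{F}$ to its minimal (reduced) model, which changes neither $\mathrm{kod}(\mathcal{F})$ nor the hypotheses up to the birational transformation allowed in the statement.

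The heart of the matter, and the step where all the difficulty concentrates, is the foliated Green--Griffiths statement: a foliation of general type carries no Zariski-dense entire leaf. To prove it I would attach to $\phi$ an Ahlfors--Nevanlinna current $T$, realized as a weak limit of the currents of integration over the images $\phi(\{|t|<r\})$ normalized by their areas; the Zariski-density of $\phi(\mathbf{C})$ guarantees that $T$ is a nonzero closed positive current putting no mass on any algebraic curve. McQuillan's tautological inequality bounds the intersection $T\cdot K_{\mathcal{F}}$ from above by a ramification (counting) contribution, and his refined second main theorem for foliated surfaces---an Ahlfors--Schwarz estimate performed leafwise, combined with the Camacho--Sad index theorem (\cite{camacho-sad}) to absorb the contribution of the singular points of $\mathcal{F}$ that the leaf approaches---upgrades this to $T\cdot K_{\mathcal{F}}\leq 0$. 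Were $\mathcal{F}$ of general type, $K_{\mathcal{F}}$ would be big and would thus meet the nonzero positive current $T$ positively, a contradiction. I would simply invoke this negativity from McQuillan's and Brunella's work (\cite{mcquillan-ncmt}, \cite{brunella-birational}); reproving it is well beyond the present scope, and it is unmistakably the main obstacle.

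Granting $\mathrm{kod}(\mathcal{F})\leq 1$, the remaining work is to traverse the classification of such foliations and match each item against the two features of our hypothesis: that $\phi$ is tangent to $\mathcal{F}$ and that its image is Zariski-dense. When $\mathrm{kod}(\mathcal{F})=-\infty$ the minimal model is a foliation adapted to a rational fibration; if the fibration is a first integral the leaves are the algebraic fibers and $\phi$ could not be Zariski-dense, so the foliation is transverse to the fibration and is therefore Riccati. When $\mathcal{F}$ admits, invariantly or transversally, an elliptic fibration, the invariant case again confines $\phi$ to a fiber and is excluded, while the transverse case is turbulent. The remaining non--general-type foliations of Kodaira dimension $0$ are, up to a birational transformation and a ramified cover of the base, induced by a holomorphic vector field (the linear foliations on tori and their quotients), which is precisely the third alternative; the isolated rigid families that also occur at $\mathrm{kod}(\mathcal{F})\in\{0,1\}$, chiefly the Hilbert modular foliations, are discarded because their leaves are uniformized by hyperbolic domains and hence support no Zariski-dense entire curve, once more by a Nevanlinna-type argument. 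Assembling these cases gives that $\mathcal{F}$ is Riccati, turbulent, or, up to a birational transformation, induced by a holomorphic vector field on a ramified cover, as claimed.
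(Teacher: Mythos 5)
This statement is not proved in the paper at all: Theorem~\ref{mq} is imported as an external result of McQuillan, with the surrounding text pointing to \cite{mcquillan-ncmt}, \cite{mcquillan-canonical} and \cite{brunella-birational} and then using the trichotomy as a black box in Section~\ref{sec:entire}. So your proposal is not competing with an internal argument. As a sketch of the proof that exists in the literature it is faithful in its broad strokes: pass to a reduced/minimal model, attach an Ahlfors--Nevanlinna current $T$ to the Zariski-dense entire leaf, use the (refined) tautological inequality to get $T\cdot K_{\mathcal{F}}\le 0$ and hence exclude $K_{\mathcal{F}}$ big, then read the three alternatives off the Brunella--McQuillan--Mendes classification of foliations with $\mathrm{kod}(\mathcal{F})\le 1$, discarding the fibration-type and Hilbert modular items because their leaves are algebraic or hyperbolic. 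But note that you, too, explicitly defer the one genuinely hard step --- the negativity $T\cdot K_{\mathcal{F}}\le 0$ --- to the very same references the paper cites, so in substance your ``proof'' is, like the paper's, a citation with an annotated table of contents. One small inaccuracy in the annotation: the Hilbert modular foliations arise as the exceptional case of $\mathrm{kod}(\mathcal{F})=-\infty$ (the alternative to a rational fibration in the Brunella--McQuillan theorem), not at $\mathrm{kod}(\mathcal{F})\in\{0,1\}$; this does not affect your argument, since they are excluded for the reason you give (their leaves are uniformized by the disk, so they carry no nonconstant entire curves).
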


Our proof of the Main Theorem in the present setting is based on this result and will follow case-by-case considerations according to it. It will be spread throughout sections~\ref{mqcov}, \ref{sec:thric}, and~\ref{sec:turb}.  In all of them, we will suppose that~$S$ is a surface, $X$ a meromorphic vector field on~$S$ and that~$\phi:\mathbf{C}\to S$ is like in the beginning of this section. Let~$\mathcal{L}=\phi(\mathbf{C})$.

\subsection{Covered by a vector field}\label{mqcov} The last possibility in theorem~\ref{mq} is that, up to a birational map on~$S$, there is a finite ramified covering~$r:M\to S$ of a compact surface~$M$ where the pull-back of~$\mathcal{F}$ is generated by a holomorphic (hence complete) vector field~$V$ with isolated singularities.   Up to birational maps, the ramified covering is Galois~\cite[fact~IV.3.3]{mcquillan-canonical}, and acts by preserving the foliation induced by~$V$ (but not necessarily~$V$ itself). From the classification of holomorphic vector fields on algebraic surfaces (see~\cite[ch.~6, props.~5 and~6]{brunella-birational}) we can extract the following result:

\begin{proposition}\label{vfkah} A holomorphic vector field on a compact complex projective surface that does not have a first integral is, up to a birational transformation, either
	\begin{itemize}
		\item a constant vector field on an Abelian surface,
		\item a product vector field on~$\mathbf{P}^1\times\mathbf{P}^1$, or
		\item a vector field tangent to a Riccati foliation obtained by suspension over an elliptic curve.
	\end{itemize}
\end{proposition}

In the third case the Main Theorem follows from proposition~\ref{prop:susp}. We will now prove the Main Theorem for meromorphic vector fields covered by foliations associated to the first two cases.

\subsubsection{Constant vector fields on Abelian surfaces} We  will first describe the intersections of algebraic curves with the orbits of these vector fields.  

\begin{lemma}\label{abint} Let~$M$ be an Abelian surface, $\mathcal{F}$ a foliation given by a constant vector field~$V$, $E\subset M$ an algebraic curve which is not invariant by~$\mathcal{F}$, $L$ a leaf of~$\mathcal{F}$. Either~$L$ is closed or it intersects~$E$ infinitely many times.
\end{lemma}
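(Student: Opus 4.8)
The plan is to lift everything to the universal cover and count, via Jensen's formula, the zeros of a theta function restricted to a complex line. Write $M=\mathbf{C}^2/\Lambda$ with $\Lambda$ a rank-four lattice, let $\pi:\mathbf{C}^2\to M$ be the projection, and let $v\in\mathbf{C}^2$ be the constant vector defining $V$, so that the leaves of $\mathcal{F}$ are the images of the affine lines $z_0+\mathbf{C}v$. Put $\Gamma=\{t\in\mathbf{C}\;:\;tv\in\Lambda\}$, a subgroup of $\mathbf{C}$ of rank at most $2$. A leaf $L=\pi(z_0+\mathbf{C}v)$ is closed exactly when $\mathrm{rank}\,\Gamma=2$ (equivalently, $L$ is a translate of an elliptic subcurve); when the rank is $0$ or $1$ the subgroup $\pi(\mathbf{C}v)\cong\mathbf{C}$ or $\mathbf{C}^*$ is noncompact, and since closed connected subgroups of $M$ are subtori, $L$ is not closed. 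I will show that in these two latter cases $L$ meets $E$ infinitely often.

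After replacing $E$ by one of its irreducible components that is not $\mathcal{F}$-invariant (such a component exists because $E$ is not invariant, and $L$, being nonalgebraic, cannot be contained in it), I may assume $E$ irreducible. Let $\vartheta$ be a theta function on $\mathbf{C}^2$ whose divisor is $\widetilde E=\pi^{-1}(E)$, and let $H$ be the associated Hermitian form, representing $c_1(\mathcal{O}_M(E))$. Fixing a line $z_0+\mathbf{C}v$ carrying $L$, set $f(t)=\vartheta(z_0+tv)$, an entire function whose zeros are exactly the preimages of $L\cap E$. The key analytic input is that $e^{-\pi H(z,z)}|\vartheta(z)|^2$ descends to $M$ and is therefore bounded; hence $\log|f(t)|=\tfrac{\pi}{2}H(v,v)\,|t|^2+O(|t|)$, and Jensen's formula shows that the number $n(r)$ of zeros of $f$ in $\{|t|\le r\}$ satisfies $n(r)=\Theta(r^2)$ as soon as $H(v,v)>0$.

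The crucial point is thus the positivity $H(v,v)>0$, and this is precisely where the noninvariance of $E$ enters. Since $M$ is abelian it contains no rational curve, so adjunction (with $K_M=0$) gives $E^2=2g(E)-2\ge 0$. If $E^2>0$ then $H$ is positive definite and $H(v,v)>0$ automatically. If $E^2=0$ then $E$ is an elliptic curve, $H$ is positive semidefinite, and its kernel is exactly the tangent direction of $E$; were $v$ to lie in this kernel, the line $z_0+\mathbf{C}v$ would be parallel to $E$, making $E$ a leaf of $\mathcal{F}$, which contradicts noninvariance. Hence $H(v,v)>0$ in every case, and $n(r)=\Theta(r^2)$.

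Finally I convert this quadratic growth into infinitely many \emph{distinct} intersection points. The zero set of $f$ is invariant under translation by $\Gamma$, and the points of $L\cap E$ correspond to the $\Gamma$-orbits of zeros. If $L$ is not closed (so $\mathrm{rank}\,\Gamma\le 1$) yet met $E$ only finitely often, then all zeros of $f$ would lie in finitely many $\Gamma$-orbits; but each such orbit contributes only $O(r)$ zeros to $\{|t|\le r\}$ (a single point when $\mathrm{rank}\,\Gamma=0$, an arithmetic progression when $\mathrm{rank}\,\Gamma=1$), whence $n(r)=O(r)$, contradicting $n(r)=\Theta(r^2)$. Therefore a noncompact leaf meets $E$ infinitely often, which is the desired dichotomy. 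The main obstacle is the positivity step: controlling the degenerate direction in the $E^2=0$ case is exactly what forces the use of the hypothesis that $E$ is not invariant.
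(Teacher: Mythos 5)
Your reduction to the positivity $H(v,v)>0$ and your handling of the $\Gamma$-orbits are fine, but the central analytic step is not: from the boundedness of $e^{-\pi H(z,z)}|\vartheta(z)|^2$ on the compact surface you may only conclude the \emph{upper} bound $\log|f(t)|\le \tfrac{\pi}{2}H(v,v)|t|^2+O(|t|)$, not the two-sided estimate you assert. (Pointwise the lower bound is false: $\log|f|=-\infty$ at every zero, and the descended function $e^{-\pi H}|\vartheta|^2$, while bounded above, vanishes on $E$ and is in no way bounded away from~$0$ along the leaf.) Jensen's formula therefore gives only $n(r)=O(r^2)$. The lower bound $n(r)=\Omega(r^2)$ --- or even $n(r)\to\infty$, which is what your final counting argument against the $O(r)$ contribution of finitely many $\Gamma$-orbits actually requires --- is unproven, and it is precisely the content of the lemma. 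Indeed, the upper bound alone is consistent with $f(t)=P(t)e^{at^2+bt+c}$ having finitely many (or no) zeros; ruling this out amounts to a lower bound on the circle averages $\frac{1}{2\pi}\int_0^{2\pi}\log|f(re^{i\theta})|\,d\theta$, i.e.\ to an upper bound $m(r,E)=o(r^2)$ on the Nevanlinna proximity function of the leaf to~$E$. That is a statement of Second-Main-Theorem type (for entire curves in Abelian varieties it is the Siu--Yeung/Noguchi theorem), not a consequence of quasi-periodicity of~$\vartheta$. So as written the proof asserts, rather than proves, that the leaf does not spend essentially all of its logarithmically weighted time arbitrarily close to~$E$.

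The paper avoids this difficulty entirely with a soft argument: take a holomorphic one-form $\eta$ on $M$ annihilating $V$, so that a path with $\int\eta=0$ has endpoints on the same leaf. Since $E$ is compact and not invariant, $\eta|_E$ is a nonzero holomorphic form and the (possibly ramified) translation structure it induces on $E$ is complete; hence any path $\gamma$ from a point $p\in E$ to a point of $L$ can be ``developed'' into a path $\widehat\gamma$ inside $E$ with the same $\eta$-integral, and $\widehat\gamma(1)\in E\cap L$. Every leaf thus meets $E$, and density of a non-closed $L$ in its (higher-dimensional) closure turns one intersection point into infinitely many. If you want to salvage your theta-function approach, you must supply the missing lower bound --- for instance by showing that the saturation of $E$ under the foliation is all of $M$ (which again comes down to a completeness/openness argument rather than to growth estimates) --- at which point the detour through $\vartheta$ becomes unnecessary.
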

\begin{proof} Let~$\eta$ be a holomorphic one-form in~$M$ such that~$\eta|_V \equiv  0$. The integral of~$\eta$ in~$M$ measures distances between leaves of~$\mathcal{F}$: if~$\rho:[0,1]\to M$ is such that~$\int_\rho\eta=0$, $\rho(0)$ and~$\rho(1)$ belong to the same leaf of~$\mathcal{F}$. Let~$\gamma:[0,1]\to M$ be a path joining a point~$p\in E$ to a point in~$L$. The (ramified) translation structure induced by~$\eta$ on~$E$ is geodesically complete. Thus, there is a curve~$\widehat{\gamma}:[0,1]\to E$, $\widehat{\gamma}(0)=p$, such that~$\gamma^*\eta\equiv\widehat{\gamma}^*\eta$. Hence, $\widehat{\gamma}(1)$ belongs to~$E\cap L$, which is, in particular, not empty. Let~$q$ be a point in this intersection.  We may endow~$M$ with the structure of a real Abelian Lie group having its zero at~$q$. Since~$L$ is a subgroup of~$M$, its closure~$\overline{L}$ is a real (closed) Lie subgroup of~$M$, a torus. Its real dimension is either two (when~$L$ is closed), three, or four (when~$L$ is dense in~$M$). There is nothing to prove if~$L$ is closed or if~$L$ is dense in~$M$, so we need only consider the case where~$\overline{L}$ is a three-dimensional real torus. Let~$h:U\to \mathbf{C}$ be a local primitive of~$\eta$ defined in a neighborhood of~$q$ induced by an affine function in~$\mathbf{C}^2$ such that~$\overline{L}$ is locally given by~$\Im(h)=0$ (this is a particular instance of Cartan's theorem on the local structure of real-analytic, non-singular, Levi-flat submanifolds~\cite[Th\'eor\`eme~IV, p.~23]{cartan}). In a neighborhood of~$q$ within~$E$, the zero level set of the harmonic function~$\Im(h|_E)$, $\overline{L}\cap E$, is infinite. Since~$L$ is dense in~$\overline{L}$, $L$ intersects~$E$ infinitely many times. \end{proof}

Let us prove the Main Theorem for meromorphic vector fields covered by foliations associated to the first possibility in proposition~\ref{vfkah}. Let~$S$ be a compact surface, $X$ a meromorphic vector field on~$S$, and~$\mathcal{L}$ like in the beginning of this section. Let~$M$ be an Abelian surface, $V$ a constant vector field on~$M$, $r:M\to S$ a ramified cover and~$R$ a meromorphic function on~$M$ such that~$r^*X=RV$.

Let~$\widetilde{\mathcal{L}}$ be the orbit of~$V$ that projects onto~$\mathcal{L}$ (we may suppose that~$\widetilde{\mathcal{L}}$ is not closed for otherwise~$\mathcal{L}$ would be closed as well). Let~$E$ be an irreducible component of the curve of zeros of~$R$. Since~$V$ has no closed orbit, $E$ is generically transverse to~$\mathcal{F}$. By lemma~\ref{abint}, $\widetilde{\mathcal{L}}$ intersects~$E$ infinitely many times, and the restriction of~$RV$ to~$\widetilde{\mathcal{L}}$ has infinitely many zeros. By lemma~\ref{fewstat}, the solution of~$r^*X$ taking values in~$\widetilde{\mathcal{L}}$ has infinitely many determinations and so must the solution of~$X$ taking values in~$\mathcal{L}$. We conclude that~$R$ has no zeros, and, in consequence, no poles either, that it is thus constant, and that~$RV$ is holomorphic. Since the Galois group of~$r$ acts by preserving~$RV$, $X$ is holomorphic as well.

\subsubsection{Product vector fields}\label{sub:product} Product vector fields on~$\mathbf{P}^1\times\mathbf{P}^1$ that do not admit a first integral are, in suitable coordinates and up to a constant factor, of the form~$z\indel{z}+\indel{w}$ or~$x\indel{x}+\mu y\indel{y}$, $\mu\notin\mathbf{Q}$. Multiplying these by rational functions often produces vector fields whose solutions have infinitely many determinations:

\begin{lemma}\label{cxp1} If a meromorphic vector field on~$\mathbf{P}^1\times\mathbf{P}^1$ admits a transcendental solution with finitely many determinations and is of the form
	\begin{itemize}
		\item $R(z,w)(z\indel{z}+\indel{w})$, then $R$ is a function of~$w$; 
		\item $R(z,w)(z\indel{z}+\mu w\indel{w})$, with $\mu\notin\mathbf{Q}$, then~$R$ is constant.
	\end{itemize}
\end{lemma}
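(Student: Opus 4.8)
The plan is to restrict $X$ to the single leaf $\mathcal{L}$ swept out by the given solution and to study the resulting meromorphic vector field on a curve, to which the results of Section~2 apply directly. In both cases the underlying product field $V$ is complete and its leaf through a generic point is parametrized by its own time: for $V=z\indel{z}+\indel{w}$ the leaf is $\{z=Ce^{w}\}$, an embedded copy of~$\mathbf{C}$ with coordinate~$w$ (the $V$-time being $t=w$ up to a constant), while for $V=z\indel{z}+\mu w\indel{w}$ it is $t\mapsto(z_{0}e^{t},w_{0}e^{\mu t})$, again a copy of~$\mathbf{C}$ with coordinate~$t$. Writing $X=RV$, the restriction $X|_{\mathcal{L}}$ equals $\rho\,\indel{t}$, where $\rho$ is the pullback of~$R$ to this $\mathbf{C}$; the given solution of~$X$ is precisely the solution of $\rho\,\indel{t}$ on the curve, and it has finitely many determinations. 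First I would read off, from Proposition~\ref{vf-essential}, Lemma~\ref{fewstat} and the Remark following it, that $\rho$ cannot have an essential singularity, that it has no poles (after passing to the finite cover on which the solution becomes single-valued and maximal, where Lemma~\ref{fewstat} forbids poles), and that it has at most two zeros counted with multiplicity.

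The next step is to identify the divisor of~$R$. The key computation is that substituting the exponential parametrization into a defining polynomial produces an exponential sum: for the first case $P(Ce^{w},w)=\sum_{j}p_{j}(w)\,C^{j}e^{jw}$, and for the second $P(z_{0}e^{t},w_{0}e^{\mu t})=\sum_{j,k}c_{jk}z_{0}^{j}w_{0}^{k}e^{(j+\mu k)t}$, whose frequencies $j+\mu k$ are pairwise distinct because~$\mu$ is irrational. A classical fact about such sums is that they have infinitely many zeros unless they reduce to a single exponential term. Hence a component of the zero or pole divisor of~$R$ met by~$\mathcal{L}$ infinitely often would force $\rho$ to have infinitely many zeros or poles, contradicting the previous paragraph; and an irreducible algebraic curve is met by~$\mathcal{L}$ infinitely often unless it is one of the coordinate lines invariant by~$\mathcal{F}$ (in the second case every non-invariant curve meets the non-closed leaf infinitely often, as in Lemma~\ref{abint}) or, in the first case, a fibre $\{w=w_{i}\}$, which $\mathcal{L}$ meets exactly once. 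Consequently the divisor of~$R$ is supported on the invariant lines, together with finitely many fibres $\{w=w_{i}\}$ in the first case, so that $R=c\,z^{a}w^{b}$ in the second case and $R=z^{a}g(w)$ with $g$ rational in the first.

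It then remains to kill the exponent~$a$. Substituting back, $\rho(w)=C^{a}e^{aw}g(w)$ in the first case and $\rho(t)=c'\,e^{(a+\mu b)t}$ in the second. Since a function holomorphic on~$\mathbf{C}$ with no essential singularity at infinity has polynomial growth and is therefore a polynomial, the exponential factor must be absent: in the first case $a=0$, giving $R=g(w)$, a function of~$w$ alone; in the second case $a+\mu b=0$, which forces $a=b=0$ because $\mu\notin\mathbf{Q}$, giving $R$ constant. I expect the main obstacle to lie in the determination bookkeeping of the first paragraph, specifically the exclusion of poles of~$\rho$: since poles do not accumulate determinations in the naive way (cf. the Remark after Lemma~\ref{fewstat}), one cannot argue from their number directly and must instead pass to the finite cover on which the solution is single-valued and maximal and invoke Lemma~\ref{fewstat} there. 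The exponential-sum intersection count is the other delicate input, but it is classical.
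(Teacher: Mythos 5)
Your proposal follows the same route as the paper's proof: parametrize the leaf by the time of the underlying complete field $V$, observe that the restricted field is $\rho\,\indel{t}$ with $\rho$ the pullback of $R$, invoke Proposition~\ref{vf-essential} to make $\rho$ rational in $t$, and then exploit the fact that an exponential sum with two or more distinct frequencies cannot be rational. Your middle section (locating the divisor of $R$ on the $\mathcal{F}$-invariant lines via intersection counts, then killing the exponents) is a slightly longer path to the conclusion that the paper reaches by directly comparing coefficients of $e^{js}$ (resp.\ $e^{(j+\mu k)s}$) in the rational identity, but it is sound.

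The one step that does not work is the one you yourself flag: extracting ``$\rho$ has no poles and at most two zeros'' from Lemma~\ref{fewstat} by ``passing to the finite cover on which the solution becomes single-valued and maximal.'' There is no such cover of the \emph{curve}: a solution with $n>1$ determinations becomes single-valued on an $n$-sheeted cover of its domain in the $t$-plane, not on a cover of $\widehat{\mathcal{L}}$ carrying an induced vector field to which Lemma~\ref{fewstat} could be applied. The Remark following that lemma exhibits precisely the counterexample: a meromorphic vector field on a hyperelliptic curve with $2(g+1)$ simple poles whose solutions have only two determinations. So ``finitely many determinations'' forbids neither poles of $\rho$ nor more than two zeros. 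Fortunately none of this is needed: Proposition~\ref{vf-essential} alone makes $\rho$ a rational function of $t$, hence with finitely many zeros and poles in $\mathbf{C}$, and finiteness is the only input your divisor-counting argument actually uses. Deleting the appeal to Lemma~\ref{fewstat} leaves a correct proof.
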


\begin{proof} In the first case, consider a transcendental leaf parametrized by~$s\mapsto (z_0e^s,s)$, $s\in\mathbf{C}$. The induced vector field is~$R(z_0e^s,s)\indel{s}$. Since its solutions have finitely many determinations, from proposition~\ref{vf-essential}, this vector field is a rational one and~$R$ is a rational function of~$w$ (is independent of~$z$). In the second, parametrize an orbit of by~$s\mapsto(x_0e^s,y_0e^{\mu s})$, $s\in\mathbf{C}$. The induced vector field is~$R(x_0e^s,y_0e^{\mu s})\indel{s}$. By proposition~\ref{vf-essential}, if this solution has finitely many determinations, $R(x_0e^s,y_0e^{\mu s})$ is a rational function of~$s$. But since~$\mu\notin \mathbf{Q}$, $R$ must be constant.\end{proof}

Let us now prove the Main Theorem for meromorphic vector fields covered by foliations associated to the second possibility in proposition~\ref{vfkah}. Let~$S$ be a compact surface, $X$ a meromorphic vector field on~$S$, and~$\mathcal{L}$ like in the beginning of this section, $r:\mathbf{P}^1\times\mathbf{P}^1\to S$ a ramified covering, $V$ a holomorphic  product vector field on~$\mathbf{P}^1\times\mathbf{P}^1$ such that~$r^*X=RV$. The Galois group of~$r$ acts by preserving~$RV$. A description of the birational automorphisms of the associated foliation on~$\mathbf{P}^1\times\mathbf{P}^1$, from which we will borrow some arguments, appears in~\cite{jvp-height}.

First consider the case where~$V=z\indel{z}+\indel{w}$. By lemma~\ref{cxp1}, $r^{*}X=R(w)(z\indel{z}+\indel{w})$. In order to understand the group of birational automorphisms preserving~$RV$, let us begin by describing the larger group of birational transformations preserving the foliation induced by~$V$. We claim that the latter is isomorphic to~$\mathbf{Z}/2\mathbf{Z}\ltimes(\mathbf{C}\times\mathbf{C}^*)$, where~$\mathbf{C}$ acts via the flow of~$\indel{w}$, $\mathbf{C}^*$ via that of~$z\indel{z}$ and~$\mathbf{Z}/2\mathbf{Z}$ by
\begin{equation}\label{ordertwo}(z,w)\mapsto(1/z,-w).\end{equation}
Since the transcendental leaves are Zariski-dense, an automorphism is determined by its action upon such a leaf. The group $\mathbf{C}\times\mathbf{C}^*$ acts transitively upon the set of transcendental pointed leaves. If an automorphism acts upon the leaf parametrized by~$s\mapsto(e^s,s)$, $s\in\mathbf{C}$, fixing the point~$s=0$, it is of the form~$(e^{a s}, a s)$, $a\in\mathbf{C}$, since the only biholomorphisms of~$\mathbf{C}$ fixing~$0$ are those of the form~$s\mapsto as$. If such an automorphism extends to~$\mathbf{C}^2$, it extends as~$(z,w)\mapsto (z^a,aw)$, whose inverse is~$(z,w)\mapsto (z^{1/a},w/a)$. In order for the automorphism and its inverse to be well-defined, both~$a$ and~$a^{-1}$ must be integers, and thus $a^2=1$, proving our claim. The projection onto~$w$ is equivariant with respect to the action of this group of order two, and invariant under the action of~$\mathbf{C}^*$. The group acting on~$w$ is~$\mathbf{Z}/2\mathbf{Z}\ltimes\mathbf{C}$. All finite subgroups of the latter are either trivial or have order two and are generated by a map of the form~$w\mapsto -w+c$ (in which case we may suppose that~$c=0$). If~$R(w)(z\indel{z}+\indel{w})$ is invariant under the action of some group acting trivially on~$w$, the projection onto~$w$ maps the vector field onto~$R(w)\indel{w}$, and the fibration is preserved. If it is invariant under the action of some group acting on~$w$ by~$w\mapsto -w$, then~$R(-w)=-R(w)$ and thus~$R(w)=wQ(w^2)$ for some rational function~$Q$. The image of the vector field under the quotient map~$(z,w)\mapsto w^2$ is, for~$\xi=w^2$, $2\xi Q(\xi)\indel{\xi}$. The vector field preserves this fibration.

Consider now the case where~$V=x\indel{x}+\mu y\indel{y}$, $\mu\notin\mathbf{Q}$. 
By lemma~\ref{cxp1}, $r^*X$ is a constant multiple of~$V$ and is thus holomorphic. The Galois group of~$r$ acts by preserving this vector field and thus~$X$ is holomorphic as well. 

We thus prove the Main Theorem in these cases.

\subsection{Riccati}\label{sec:thric}  Let~$\Pi:S\to B$ be a rational fibration and~$X$ a meromorphic vector field on~$S$ inducing a Riccati  foliation~$\mathcal{F}$ adapted to~$\Pi$. We will suppose that the Riccati foliation is in standard form, as discussed in section~\ref{subsec-ric}. Let~$\phi:\mathbf{C}\to S$ be a map with Zariski-dense image that is almost everywhere a solution to~$X$ and let~$\mathcal{L}=\phi(\mathbf{C})$. Since we have the holomorphic function~$\Pi\circ\phi:\mathbf{C}\to B$, $B$ is either rational or elliptic.

The vector field may preserve the rational fibration~$\Pi$. For this to happen, it is sufficient for~$X$ to be holomorphic in the neighborhood of a generic fiber. In this situation, the local flow will map fibers to fibers (by the maximum principle) and will induce a non-identically-zero holomorphic vector field~$Y$ on~$B$. By lemma~\ref{fewstat}, $Y$ will extend holomorphically to~$B$, proving the Main Theorem for this case. However, this is not the only possibility, and there will be cases where the vector field will not preserve the rational fibration. In this case, we will prove the Main Theorem either by showing that the vector field is birationally equivalent to a holomorphic one, or by exhibiting another fibration that is invariant under the flow (like in proposition~\ref{prop:susp}). Example~\ref{ex:notriccati1} gives an example of this last situation: a vector field with a transcendental univalent solution inducing a foliation which is Riccati with respect to a rational fibration, but where this fibration is not preserved by the flow (in this example, another fibration, elliptic, is preserved).

For the proof of the Main Theorem we now begin a series of cases that will cover all the possibilities. Let~$\rho:\pi_1(B\setminus\{p_1, \ldots, p_k\})\to \mathrm{PSL}(2,\mathbf{C})$ be the monodromy representation,  $\Gamma$  its image.

\subsubsection{When the orbit of the monodromy containing the solution is finite}\label{ss:finorb} 
This is the case, for instance, when the monodromy of the Riccati equation is finite.

\begin{proposition}\label{notfinite}  Let~$X$ be a vector field on the surface~$S$ inducing a Riccati foliation adapted to the fibration~$\Pi:S\to B$. Let~$H$ be a generic fiber, let~$\phi:\mathbf{C}\to S$ be almost everywhere a solution to~$X$, and let~$\mathcal{L}$ be its image. If~$\mathcal{L}$ is Zariski-dense and~$\mathcal{L}\cap H$ is finite, $X$ preserves~$\Pi$. 
\end{proposition}

\begin{proof} We begin with the cases where~$B=\mathbf{P}^1$. If~$\phi$ does not have periods, let~$\psi:\mathbf{C}\to\mathbf{P}^1$ be given by $\psi=\Pi\circ\phi$. Our assumption is that, for a generic~$t_0$, 
	$\{ t\in\mathbf{C}|\psi(t)=\psi(t_0)\}$ is finite. This implies that $\psi$ is a rational function of~$t$ (for otherwise it would have an essential singularity at~$t=\infty$ and, by Picard's Great Theorem, our finiteness assumption would be violated). Under~$t\mapsto\psi(t)$, $X$ induces a rational multivalued vector field~$V$ in~$\mathbf{P}^1$.  The set of points in~$S$ such that~$\Pi$ maps~$X$ onto any one of the determinations of~$V$ is a closed analytic subset of~$S$ which contains the Zariski-dense curve~$\mathcal{L}$ and which is thus all of~$S$. Along a fiber of~$\Pi$, the projection of~$X$ onto~$B$ is locally constant and is thus constant. We conclude that~$V$ is a honest rational vector field and that the fibration is preserved  (moreover, since~$V$ has the single-valued solution~$\psi$, the vector field on~$B$ is holomorphic and, since~$\psi$ is rational, it reads~$\indel{z}$ in a suitable coordinate). Suppose now that~$\phi$ has a period and, without loss of generality, that this period is~$2i\pi$. Let~$\zeta=e^t$, so that~$\phi(t)=\sigma(\zeta)$ for some~$\sigma:\mathbf{C}^*\to S$. Let~$\psi:\mathbf{C}^*\to B$ be given by~$\Pi\circ\sigma$. The finiteness assumption is that for a generic~$\zeta_0$, $\{ \zeta\in\mathbf{C}^*|\psi(\zeta)=\psi(\zeta_0)\}$ is finite. This implies that $\psi$ is a rational function of~$\zeta$ (for otherwise it would have an essential singularity at~$\zeta=0$ or~$\zeta=\infty$). The image of~$\zeta\indel{\zeta}$ under~$\psi$
	is a rational multivalued vector field~$Y$ on~$B$. A variation of the previous arguments shows that~$B$ is rational and that~$Y$ is  holomorphic (and that in a convenient coordinate reads~$z\indel{z}$). This proves that the fibration is preserved.

	The case where~$B$ is elliptic does not arrive. Suppose that~$B$ is an elliptic curve and that~$\phi$ has no periods and consider the function $\psi:\mathbf{C}\to B$ as before. Let~$\theta:B\to\mathbf{P}^1$ be an auxiliary nonconstant rational map. Since~$\psi$ has only finitely many points in the preimage of any given one, so does~$\theta\circ\psi$. By Picard's Great Theorem, $\theta\circ\psi$ must extend holomorphically to~$\infty$, and so must~$\psi$. This yields a nonconstant function~$\overline{\psi}:\mathbf{P}^1\to B$, which is impossible. If~$\phi$ has periods, we may suppose without loss of generality that~$2i\pi$ is a primitive period, that~$\phi(t)=\sigma(e^t)$ for some rational function~$\sigma$. The function~$\psi:\mathbf{C}^*\to B$ defined as before must extend holomorphically to~$0$ and~$\infty$. Again, this is impossible.\end{proof}

\subsubsection{When the monodromy is discrete, infinite and elementary}\label{con:discele} In this case the monodromy of the Riccati equation either
\begin{enumerate} 
	\item \label{en:af} has a subgroup of finite index consisting of translations,  or
	\item \label{en:nonaf} is contained in the group that permutes~$\{0,\infty\}$ and contains a subgroup of~$\mathbf{C}^*$ with index at most two~\cite[sections~59--62]{ford}. 
\end{enumerate}
In the first case there is one fixed point for~$\Gamma$; in the second, two fixed points or an orbit of size two. We may suppose, by the results in section~\ref{ss:finorb}, that~$\mathcal{L}$ does not correspond to one of these finite orbits, and it thus accumulates to each one of them, which belong  to~$\mathfrak{U}$.   Let~$\Sigma$ be a leaf of~$\mathcal{F}$ corresponding to a finite orbit of~$\Gamma$. If~$\Sigma$ is Zariski-dense, $X$ would be holomorphic and nonzero at a generic point of~$\Sigma$ and we would be in the setting of  proposition~\ref{notfinite}. We may thus suppose that~$\Sigma$ is an irreducible algebraic curve.

The present setting limits the possible kinds of special fibers (section~\ref{subsec-ric}) appearing in the Riccati foliation.

\begin{proposition}\label{sf:ndpod} If the monodromy is discrete, infinite and elementary, all the special fibers are either nondegenerate parabolic  or dicritical. 
\end{proposition}

\begin{proof}  If every  irreducible component of the curve of poles is contained in a fiber, $X$ preserves the fibration. Otherwise, there is an irreducible component of the curve of poles generically transverse to~$\Pi$ and, by~(\ref{trzer}), an irreducible component  of the curve of poles that does so as well.

	Suppose that~$\Sigma$, a leaf of~$\mathcal{F}$ associated to a finite orbit of~$\Gamma$, is a component of the curve of zeros or poles of~$X$. Let us discard the existence of semidegenerate and nilpotent special fibers (for these,  the only invariant curves that are not contained in the fiber are, after desingularization, weak separatrices of saddle nodes). If we had such a special fiber then~$\Sigma$ would be (after reducing the singularities of~$\mathcal{F}$ if need be) the weak separatrix of a saddle node. Up to further blowups, we may suppose that this saddle-node is a point where~$X$ is reduced in the sense of section~\ref{bir}. Since~$\Sigma$ is accumulated by every orbit, it belongs to~$\mathfrak{U}$, and the results of section~\ref{sec:saddle-node} apply: $X$ is holomorphic and nonzero along~$\Sigma$  (the weak separatrix of the saddle node), contradicting the hypothesis made on~$\Sigma$. We conclude that no special fiber of the Riccati foliation may be semidegenerate or nilpotent. In an analogous way we discard the existence of nondegenerate nonparabolic special fibers. In this   case, $\Sigma$ would be a separatrix through one of the two nodes that~$\mathcal{F}$ has at such a special fiber. According to the results of section~\ref{poincare}, $X$ is holomorphic and has an isolated equilibrium point at this node, but this contradicts the fact that~$X$ has zeros or poles along~$\Sigma$, proving that the special fibers cannot be of this kind.   
	
	Let us thus suppose that no irreducible component of the locus of zeros or poles is invariant by~$\mathcal{F}$. Let~$E$ be an irreducible component of the curve of zeros of~$X$ (it is generically transverse to~$\mathcal{F}$). Let~$(S',F')\to(S,F)$ be a reduction of the singularities of the vector field~$X$ (as discussed in section~\ref{bir}) in the neighborhood of the special fiber~$F$, denote by~$X'$, $\mathcal{F}'$, the vector field and foliation on~$S'$ and by~$\Sigma'$ the lift of~$\Sigma$ and so on. Since~$X'$ is reduced, $E'$ intersects~$F'$ transversely at a regular point of~$\mathcal{F}'$.   If~$F$ is a semidegenerate special fiber, $\mathcal{F}$ has two saddle nodes at~$F$, whose strong separatrices are contained in~$F$. The holonomy of~$\mathcal{F}$ along its leaf~$F$ is generated by a map which is tangent to the identity (but which is different from it). The set~$\mathfrak{U}$, being closed and intersecting infinitely many times each fiber, cannot be contained in the weak separatrices of these saddle-nodes and, by~\cite[lemma~19]{camacho-rosas}, $\mathfrak{U}$ accumulates to~$F$ away from the singularities of~$\mathcal{F}$.  The curve~$F'$ is a tree of rational curves. Each one of the two saddle-nodes of~$\mathcal{F}$ is found at one of the irreducible components of~$F'$ and, at the intersection of two irreducible components of~$F'$, $\mathcal{F}'$ has a saddle. Saddles have the following property: \emph{if a sequence of leaves accumulates to one separatrix, it accumulates to the other separatrix as well}~\cite[lemma~20]{camacho-rosas}. This makes the dynamics of the holonomy of~$\mathcal{F}'$ along the strong separatrices of the saddle-nodes propagate all along~$F'$; these dynamics may be seen at the transversal~$E'$: as they approach~$F'$,  the leaves of~$\mathfrak{U}$ will intersect~$E'$  infinitely many times and, as they do so, they gain infinitely many zeros of~$X'$. This contradiction rules out the existence of semidegenerate special fibers. If~$F$ is a nilpotent special fiber, the resolution of~$X$ will have one saddle-node whose strong separatrix is contained in~$F'$ and saddles at the intersection of two irreducible components of~$F'$; the same argument proves that this cannot happen. If~$F$ is a nondegenerate hyperbolic special fiber, the singularities of~$\mathcal{F}'$ on~$F'$ are nodes (with hyperbolic holonomy) or saddles, which have an analogous dynamical property~\cite[lemma~20]{camacho-rosas}; an argument similar to the previous one allows us to conclude that this case cannot arrive.

	Finally, nondegenerate elliptic special fibers do not appear, since their monodromy is not discrete. \end{proof}

According to this result we may, from now on, suppose that all the special fibers are either nondegenerate parabolic  or dicritical.

We will now prove the Main Theorem for those vector fields associated to Riccati foliations with discrete, infinite and elementary monodromy.

\paragraph{Case~\ref{en:af}, $\Gamma$ contains an infinite group of translations}

We begin with the case where~$\Gamma$ contains an infinite group of translations with finite index, having a fixed point corresponding to a leaf~$\Sigma$ of~$\mathcal{F}$. Each fiber of~$\Pi$ intersects only one point of~$\Sigma$, and~$\Sigma$ and~$B$ get identified by~$\Pi$ (in particular, $B$ inherits a uniformizable affine structure).  Let~$H$ be a generic fiber and let~$\gamma$ be a closed path in~$\Sigma$ based at~$\Sigma\cap H$ along which the monodromy of the affine structure of~$\Sigma$ is trivial. By the Fundamental Lemma (lemma~\ref{holmon}), the holonomy of~$\mathcal{F}$ along~$\gamma$ will fix pointwise the points of~$H\cap \mathfrak{U}$ sufficiently close to~$H\cap \Sigma$ and, since this set is infinite, the monodromy of the Riccati equation along~$\gamma$ is trivial. This implies that the fibers of~$\Pi$ corresponding to points of~$\Sigma$ where the affine structure is nonsingular are transverse fibers, for, according to proposition~\ref{sf:ndpod}, all other special fibers have nontrivial monodromy. Lemma~\ref{holmon} takes here a  global form:   there is a well-defined (abstract) map from the monodromy of the affine structure on~$\Sigma$ to the monodromy of the Riccati equation. We thus  need to study the representations of the monodromy of the affine structures on~$\Sigma$ (or more generally, following remark~\ref{orbifold}, of the orbifold fundamental group of~$\Sigma$) into the affine group which have infinite, discrete and elementary image. The only affine structures that do so are the parabolic and elliptic curves and orbifolds.

We first address the cases where~$\Sigma$ is a parabolic orbifold. (i) If the affine structure in~$\Sigma$ is that of a parabolic cylinder (two singularities with infinite ramification indices), $B$ is rational and the affine structure has two singularities. If one of the two local monodromies were of finite order, the global monodromy would be finite. Thus, the two special fibers are nondegenerate parabolic and the monodromy is parabolic. By placing the singularities of the affine structure of~$\Sigma$ above~$0$ and~$\infty$, the foliation is birationally equivalent to the foliation on~$\mathbf{P}^1\times\mathbf{P}^1$ given by~$zdw-dz$, with the rational fibration adapted to the Riccati foliation being~$\Pi(z,w)=z$ and~$\Sigma$ being the curve~$\{w=\infty\}$. By lemma~\ref{cxp1}, $X=R(w)(z\indel z+\indel{w})$ for some rational function~$R$. The rational fibration onto~$\mathbf{P}^1$ given by~$(z,w)\mapsto w$  maps~$X$ onto~$R(w)\indel{w}$, and is preserved by~$X$ (notice that this is not the original rational fibration~$\Pi$). (ii) If the affine structure makes~$B$ an orbifold~$(2,2,\infty)$, the monodromy around each one of the fibers corresponding to the two points with ramification index~$2$ has order two (otherwise one of them would be trivial and this would imply that the monodromy is globally finite) and these fibers are thus dicritical of multiplicity two. From~$\Pi:S\to B$ and the ramified (uniformizing) double cover~$\eta:B'\to B$ of~$B$ by the parabolic cylinder, we may construct the rational fibration~$\eta^{*}\Pi:S'\to B'$, having a vector field~$X'$ (which may no longer have univalent solutions, but which in the worst case will have solutions with   two determinations) inducing a Riccati foliation. In~$S'$, the fibers mapping to the dicritical ones in~$S$ become regular ones so that the situation is essentially the same as in case~(i). The previous reasoning implies that~$S'$ is~$\mathbf{P}^1\times\mathbf{P}^1$ and~$X'$ a vector field of the form~$S(w)(z\indel z+\indel{w})$. The Galois group of~$S'\to S$ is generated by~(\ref{ordertwo}). According to the results in section~\ref{sub:product}, the function in~$S$ induced by~$(z,w)\mapsto w^2$ maps~$X$ into a vector field on~$\mathbf{P}^1$, and gives a rational fibration (different from the original one) preserved by~$X$.

We now deal with the cases where~$\Sigma$ is an elliptic curve or orbifold. If~$\Sigma$ is an elliptic curve, there are no special fibers; the Riccati foliation is a suspension and falls within the scope of 
proposition~\ref{prop:susp}, which proves that there is a fibration preserved by the vector field. Suppose thus that~$B$ is an elliptic orbifold and let~$\eta:E\to B$ be the smallest  elliptic curve that covers it. The order of the monodromy around the ramification points of~$\eta$ divides the ramification order of the affine structure of~$B$, and the monodromy of the Riccati foliation is an image of the monodromy of the affine structure on~$B$. Since any representation in~$\mathrm{Aff}(\mathbf{C})$ of the holonomy of~$B$ having infinite image is faithful, so is this one, and the pull-back the Riccati foliation to the rational fibration~$\eta^*\Pi:\eta^* S\to E$ is a suspension. This case also falls  within the scope of proposition~\ref{prop:susp}.

\paragraph{Case~\ref{en:nonaf}, $\Gamma$ contains a group of homotheties}

We now address the case where~$\Gamma$ is in the group that permutes~$\{0,\infty\}$. This group has no parabolic elements and, in consequence, the special fibers of the corresponding Riccati foliations will all be dicritical and the local monodromies will have finite order.

Let us begin with the cases where~$\Gamma$ is actually a subgroup of~$\mathbf{C}^*$.   The uniformizable affine structures on curves whose monodromy admits representations with infinite image in~$\mathbf{C}^*$ are the parabolic cylinder  and those supported on elliptic curves. If one of the curves corresponding to one of the fixed points of~$\Gamma$ is   a parabolic cylinder, the Riccati foliation has dicritical special fibers (hence finite monodromy) at the fibers corresponding to the singularities of the affine structure with ramification index~$\infty$. But this implies that the global  monodromy is finite, and rules out this case.  If one of the curves corresponding to one of the fixed points of~$\Gamma$ is an elliptic curve, there are no special fibers, the Riccati foliation is a suspension over an elliptic curve and falls within the scope of proposition~\ref{prop:susp}.

We will now address the cases where~$\Gamma$ contains a subgroup of~$\mathbf{C}^*$ with index two. The subgroup of~$\mathrm{PSL}(2,\mathbf{C})$ that permutes~$\{0,\infty\}$ is the normalizer~$N(\mathbf{C}^*)$ of~$\mathbf{C}^*$. It is the group that, in a convenient coordinate, is generated by~$\mathbf{C}^*$ and by the involution~$w\stackrel{\sigma}{\longrightarrow} 1/w$. All the elements in the coset~$\sigma\mathbf{C}^*$ have order two. The center of~$N(\mathbf{C}^*)$ is generated by the involution~$w \stackrel{\tau}{\longrightarrow}  -w$. The couples in~$N(\mathbf{C}^*)$ that commute are either couples in~$\mathbf{C}^*$, couples in~$\sigma\mathbf{C}^*$ differing by~$\tau$, or couples where~$\tau$ is one of their members.   

Let~$\Sigma$ be the algebraic curve in~$S$ associated to the finite orbit~$\{0,\infty\}$ of~$\Gamma$ and consider the (possibly ramified) double cover~$\Pi|_\Sigma:\Sigma\to B$.  Let~$F$ be a special fiber. The monodromy around~$F$ either fixes or exchanges the two branches of~$\Sigma$. In the first case, $\Pi|_\Sigma$ is a regular cover at the two points of~$\Sigma$ that intersect~$F$; in the second, $\Pi|_\Sigma$ is locally a ramified cover of order two (these special fibers are dicritical of multiplicity two). The   double cover~$\Pi|_{\Sigma}:\Sigma\to B$ has~$r$ ramification points, which  correspond to the second situation just described.  By the Riemann-Hurwitz formula, $\chi(\Sigma)=2\chi(B)-r$. Since~$\Sigma$ and~$B$   are both either rational or elliptic, there are three possibilities:
\begin{itemize}
	\item $\Sigma$ and~$B$ are both elliptic, $r=0$;
	\item $\Sigma$ and~$B$ are both rational, $r=2$;
	\item $\Sigma$  is elliptic, $B$ is rational  and~$r=4$. 
\end{itemize}

In the first case, there are no special fibers, for the affine structure of~$\Sigma$ has no singularities.  
The monodromy of~$\mathcal{F}$ is a representation in~$N(\mathbf{C}^*)$ of the fundamental group of~$\Sigma$ whose image intersects the coset~$\sigma\mathbf{C}^*$. However, such Abelian subgroups of~$N(\mathbf{C}^*)$ are finite, and need not be considered.

In the second case, the monodromy would be a representation of the fundamental group of~$\mathbf{C}\setminus\{0,q_1, \ldots, q_k \}$, where (assuming~$q_i$ gives a fiber of the first type described, $0$ and~$\infty$ the two points of the second one) the monodromy of a loop around~$q_i$ is an element of finite order (depending on~$i$)  of~$\mathbf{C}^*$ and the monodromies of loops around~$0$ and~$\infty$ are both   in the coset~$\sigma\mathbf{C}^*$. Such groups are finite as well, and may be discarded in this situation.

In the last case,  the affine structure of~$\Sigma$ has no singularities, and the special fibers are only the four ones along which~$\Sigma$ ramifies. By pulling-back the Riccati foliation to the pull-back of the rational fibration along the twofold ramified cover~$\Pi|_\Sigma:\Sigma\to B$, we have a meromorphic vector field inducing a Riccati foliation without singularities on a fibration over the elliptic curve, a suspension. It falls within the scope of proposition~\ref{prop:susp}.

\subsubsection{In the presence of good orbifold coverings} 
For this part  we will borrow some arguments from~\cite{brunella-complete}. We will suppose that the monodromy~$\Gamma$ is infinite. Recall that we are assuming that all maximal univalent solutions can be extended to entire maps. Let~$B^*\subset B$ be given by the union of the transverse and of the dicritical fibers and let~$S^*=\Pi^{-1}B^*$.  Let~$F_1, \ldots, F_n$ be the dicritical fibers, let~$p_i=\Pi(F_i)$ and let~$m_i$ be the corresponding multiplicity. We will consider~$B^*$ as an orbifold by affecting~$p_i$ with the angle~$2\pi/m_i$ and the other points of~$B^*$ with angle~$2\pi$. The monodromy~$\Gamma$ is, naturally, a representation of the orbifold fundamental group of~$B^*$. Affect the leaves of~$\mathcal{F}$ in~$S^*$ with an orbifold structure, by declaring that, at a dicritical fiber of multiplicity~$m$, each one of the two special curves (the separatrices corresponding to the fixed points of the local monodromy), the angle is~$2\pi/m$, the angle being~$2\pi$ for all the other points. With respect to these structures, for a leaf~$L$ of~$\mathcal{F}|_{S^*}$, $\Pi|_L:L\to B^*$ is tautologically an orbifold map. We will only need to consider the cases where
\begin{equation}\label{hypo}\mathcal{L}\subset{S^*}  \text{ and  } \mathcal{L} \text{ has no orbifold points,}\end{equation}
and we will make this assumption from now on. The second part of the condition means that if~$\mathcal{L}$ intersects a special fiber at finite time, this special fiber is dicritical  and the point of intersection is not one of the two special points of the dicritical component.  The leaves which do not satisfy~(\ref{hypo}) are separatrices at one of the special fibers, and correspond, at most, to countably many points of the intersection of~$\mathfrak{U}$ with a generic fiber~$H$. In particular, hypothesis~(\ref{hypo}) holds whenever~$\mathfrak{U}\cap H$ is uncountable.

Recall that for~$\phi:\mathbf{C}\to S$, a map that is  almost everywhere a solution to~$X$, and  for~$\mathcal{L}=\phi(\mathbf{C})$, we have denoted by~$\widehat{\mathcal{L}}$ the curve obtained from~$\mathcal{L}$ after compactifying all of its analytic parabolic ends. Let~$\Theta$ be the module of periods of~$\phi$. If~$\Theta$ is a lattice, $\mathcal{L}$ is an elliptic curve (and is thus algebraic);  if~$\Theta$ is isomorphic to~$\mathbf{Z}$, $\mathcal{L}$ is isomorphic to~$\mathbf{C}^*$ (with the structure induced by the vector field~$z\indel{z}$) and at each point of~$\widehat{\mathcal{L}}\setminus\mathcal{L}$ the vector field has a simple zero (if~$\widehat{\mathcal{L}}\setminus\mathcal{L}$ has two points, $\widehat{\mathcal{L}}$ is a rational curve); finally,  if~$\Theta$ is trivial, $\mathcal{L}$ is isomorphic to~$\mathbf{C}$ (with the structure induced by~$\indel{z}$); there is at most one point in~$\widehat{\mathcal{L}}\setminus\mathcal{L}$, where the vector field has a double zero (if~$\widehat{\mathcal{L}}\neq\mathcal{L}$, $\widehat{\mathcal{L}}$ is a rational curve). Since~$\widehat{\mathcal{L}}$ is not algebraic, either~$\widehat{\mathcal{L}}=\mathcal{L}$ or~$\widehat{\mathcal{L}}\setminus\mathcal{L}$ is a single point, where the vector field has a simple zero. Two cases arise:

\paragraph{First case, when $(\widehat{\mathcal{L}}\setminus\mathcal{L})\cap S^*=\emptyset$}\label{par:firstcase} Either~$\widehat{\mathcal{L}}= \mathcal{L}$ or~$\widehat{\mathcal{L}}\setminus \mathcal{L}$ is a single point that is not in~$S^*$. On~$\mathcal{L}$, the tautological affine structure that inherited as a quotient of~$\mathbf{C}$ and the one induced by~$X$ coincide. The mapping~$\Pi|_{\mathcal{L}}:\mathcal{L}\to B^*$ is a (orbifold) covering map of~$B^*$ and thus $\mathcal{L}$ is covered by the (orbifold) universal covering of~$B^*$. Since~$\mathcal{L}$ is an entire curve, its universal cover is biholomorphic to~$\mathbf{C}$. Since the group of biholomorphisms of~$\mathbf{C}$ preserves (tautologically) the affine structure, this induces a uniformizable affine structure on~$B^*$.

If~$B^*$ is a torus, $\mathcal{F}$ has no singularities; it is a suspension, and falls within the scope of proposition~\ref{prop:susp}. We will thus suppose that~$B^*$ is either an elliptic orbifold or a parabolic one. In the neighborhood of a regular fiber, there are coordinates~$(z,w)$ for~$\Delta\times \mathbf{P}^1$ where~$X$ has the form~$f(z,w)\indel{z}$, with~$f$ meromorphic in~$z$ and algebraic in~$w$, and where the coordinate~$z$ in the base is an affine one (with respect to the affine structure we just defined). Every leaf~$L$ is naturally endowed with two affine structures with singularities, one induced by~$X$ and one by pulling back the one on~$B$ by~$\Pi|_L$. The difference~(\ref{affinedefect}) of these affine structures is~$-f_z/f\, dz$. It vanishes for all the values of~$w$ belonging to~$\mathcal{L}$, and thus vanishes identically: on each leaf, the fibration establishes an isomorphism of affine structures between the one obtained by pullback of the one on~$B$ and the one induced by~$X$. Up to a finite ramified cover~$\eta:B'\to B$, the affine structure on~$B'$ is induced by a vector field~$V$. Let~$\eta^*\Pi:S'\to B'$ be the ruled surface obtained by pulling back~$\Pi$ along~$\eta$; it has a vector field~$X'$ coming from the natural map~$S'\to S$. There is a meromorphic function~$h$ on~$S'$ such that~$\Pi_*(h X')=V$, which is constant along the leaves. In particular, it is constant on the Zariski-dense leaf~$\mathcal{L}$, and  is thus constant. The vector field~$S'$ projects to~$V$ via~$\Pi'$. This implies that~$X'$ is holomorphic above a generic fiber of~$\Pi'$ which, on its turn, implies that~$X$ is holomorphic above a generic fiber of~$\Pi$, establishing the result in this case.  

\paragraph{Second case, when $\widehat{\mathcal{L}}\setminus \mathcal{L}\subset S^*$ } We have that~$\widehat{\mathcal{L}}\setminus \mathcal{L}=\{p\}$, $p\in S^*$. The curve~$\widehat{\mathcal{L}}$ is entire and there is a global coordinate~$\zeta$ for~$\widehat{\mathcal{L}}$, centered at~$p$, where, up to a constant factor, $X|_{\mathcal{L}}=\zeta\indel{\zeta}$. In an open subset of~$\mathbf{C}$, a vector field of the form~$f(\zeta)\indel{\zeta}$ induces an affine structure. The difference of this affine structure and the one induced by~$\zeta$  is~$-f'/f\,d\zeta$. In particular, the affine defect of the affine structure induced by~$X$ is~$-\zeta^{-1}\,d\zeta$. Pairing this form with~$X$ gives the constant function~$-1$. The leaf~$\widehat{\mathcal{L}}$ (but not~$\mathcal{L}$!) is a covering of the orbifold~$B^*$ and gives it, through the coordinate~$\zeta$, an affine structure which, on its turn, can be lifted to any leaf of~$\mathcal{F}$. In a leaf, the difference of this affine structure with the one induced by~$X$ is a one-form, which may be contracted with~$X$. This produces a function in~$S$ whose restriction to~$\widehat{\mathcal{L}}$ is identically~$-1$, and that is thus the constant function~$-1$. If~$L$ is a leaf of~$\mathcal{F}$ and~$\zeta$ is an affine coordinate induced by the one in~$B^*$ in which the restriction of~$X$ reads~$f(\zeta)\indel{\zeta}$, the difference of the affine structure induced by~$X$ and the one induced by~$\zeta$  is~$-f'/f\,d\zeta$, and pairing this form with the vector field gives the function~$-f'$. Since~$-f'\equiv -1$, $f(\zeta)=\zeta+c$ for some~$c\in\mathbf{C}$. Thus, for every leaf of~$\mathcal{F}$ where the vector field is holomorphic and nonzero at a generic point, in the affine coordinate inherited from~$B^*$, the vector field~$X$ is linear. In particular, 
\begin{itemize} \item in the neighborhood of~$p$ there is a component  of the curve of zeros that intersects transversely every leaf; and
	\item no leaf intersects the curve of poles (which is thus invariant by~$\mathcal{F}$).
\end{itemize}
From the first fact, since~$\mathcal{L}$ intersects only once the curve of zeros, it cannot self-accumulate and, since it intersects infinitely many times each fiber, the monodromy of the equation is discrete. From the second one, since there must be at least one component of the curve of poles which is not contained in a fiber (for otherwise the fibration would be preserved) and since this component is invariant, the monodromy group of the equation has a finite orbit. We thus conclude that~$\Gamma$ is discrete and elementary, a case for which we have already established our result in section~\ref{con:discele}.

We have proved the Main Theorem in all cases where the induced foliation is a Riccati one, for we have proved it
\begin{itemize}
	\item when~$\Gamma$ is finite, for in this case the intersection of~$\mathcal{L}$ with a generic fiber is finite (proposition~\ref{notfinite});
	\item when $\Gamma$ is infinite, discrete  and elementary (section~\ref{con:discele});
	\item when $\Gamma$ is infinite, discrete and nonelementary, for the limit set of the action of~$\Gamma$ on a generic fiber~$H$, which is nonempty, closed and invariant, and minimal with respect to these three properties is  contained in~$\mathfrak{U}\cap H$, and since the limit set is uncountable, the latter is  uncountable as well;
	\item when~$\Gamma$ is not discrete, for~$\mathfrak{U}\cap H$, being the union of orbits of~$\overline{\Gamma}$, is either finite  or uncountable. 
\end{itemize}

\subsection{Turbulent}\label{sec:turb} The proof will follow the lines of that of the Riccati case, from which we borrow the notation. Let~$\Pi:S\to B$ be an elliptic fibration, $X$ a meromorphic vector field on~$S$ inducing a turbulent foliation adapted to~$\Pi$ without first integrals.

Let~$\Sigma$ be a component of the curve of zeros that is not contained in a fiber (it intersects all the fibers). If~$\Sigma$ is invariant by~$\mathcal{F}$, the intersection of~$\Sigma$ with a generic fiber~$H$ gives a finite orbit for the action of the monodromy of the foliation on~$H$. Since a group of automorphisms of an elliptic curve with one finite orbit is finite, the monodromy must be finite. Further, all special fibers must be dicritical, for nondicritical fibers do not have separatrices. In this case there is a first integral for~$\mathcal{F}$. If~$\Sigma$ is not invariant, in the neighborhood of the nondicritical special fibers, every leaf will intersect~$\Sigma$ infinitely many times. Hence, all the special fibers are dicritical.  We may  suppose that the monodromy is infinite, for otherwise there would be a first integral for~$\mathcal{F}$.  In particular, all the orbits self-accumulate, and the closure of any orbit contains uncountably many others. If~$p\in\widehat{\mathcal{L}}\setminus\mathcal{L}$, $p$ lies at the intersection of~$\widehat{\mathcal{L}}$ with the curve of zeros. But, by the self-accumulation of~$\widehat{\mathcal{L}}$, this would imply that the orbit intersects infinitely many times the curve of zeros. Thus, $\mathcal{L}=\widehat{\mathcal{L}}$. Let us go again through the arguments of the Riccati case in section~\ref{par:firstcase}. Consider~$B$ with its orbifold structure. Up to replacing~$\mathcal{L}$ by another orbit, the projection~$\Pi|_{\mathcal{L}}$ is a covering of~$B$ (in the orbifold sense). Thus, $B$ carries an affine structure making the restriction of~$\Pi$ to every leaf an affine map (which is regular at a generic point of~$B$). But if~$H$ is a generic fiber, the leaf through~$\Sigma\cap H$ has a singularity for its affine structure at~$\Sigma\cap H$, and this contradicts the previous assertion.  Thus, all components of the curve of zeros are contained in fibers and all the components of the curve of poles must do so as well, and the fibration is preserved.

We thus finish the proof of our Main Theorem in the case where the surface is algebraic and the solution is entire.  

\section{The nonalgebraic case}\label{sec:nonalg} The  case remaining to be considered is the one where~$S$ is K\"ahler but not algebraic. Brunella classified holomorphic foliations in these surfaces~\cite[section~10]{brunella-pisa}. We denote by~$a(S)$ the algebraic dimension of the compact complex surface~$S$.

\begin{theorem}[Brunella] Let~$S$ be a minimal nonprojective K\"ahler compact complex surface and~$\mathcal{F}$ a holomorphic foliation on~$S$. Either:
	\begin{itemize}
		\item $S$ is an elliptic fibration with~$a(S)=1$, and~$\mathcal{F}$ either coincides with the fibration or is turbulent with respect to it.
		\item $S$ is an Abelian surface with $a(S)=0$, and~$\mathcal{F}$ is induced by a constant vector field.
		\item $S$ is a $K3$ surface with $a(S)=0$, there is an Abelian surface~$A$ and a covering~$\Pi:r\to S$ such that $\mathcal{F}$ is induced through~$r$ by a constant vector field on~$A$. 
	\end{itemize}
\end{theorem}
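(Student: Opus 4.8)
The plan is to reduce everything to the Enriques--Kodaira classification of compact complex surfaces. Since a Moishezon K\"ahler surface is projective, a non-projective K\"ahler surface has algebraic dimension $a(S)\in\{0,1\}$, and I would split the argument according to this value. Working, via the birational theory of foliations, on the minimal model of $S$, I may assume $S$ minimal throughout.

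First suppose $a(S)=1$. The algebraic reduction of $S$ is then a fibration $\pi:S\to B$ onto a curve whose generic fibre, by the structure of K\"ahler surfaces of algebraic dimension one, is an elliptic curve, so that $\pi$ is an elliptic fibration and every meromorphic function on $S$ is pulled back from $B$. The foliation $\mathcal F$ is either tangent to the fibres of $\pi$ or transverse to the generic one. In the first case $\pi$ is a meromorphic first integral of $\mathcal F$; in the setting where this theorem is invoked this is excluded, since $\mathcal F$ carries an entire leaf with Zariski-dense image. Hence $\mathcal F$ is transverse to the generic fibre, that is, turbulent, which is the first alternative.

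Now suppose $a(S)=0$. The minimal K\"ahler surfaces with vanishing algebraic dimension are the (non-algebraic) complex tori and the (non-algebraic) $K3$ surfaces. On a torus $T=\mathbf{C}^2/\Lambda$ the cotangent bundle is trivial and $H^0(T,\Omega^1_T)$ is spanned by the constant forms $dz_1,dz_2$; a foliation being cut out by such a global one-form, $\mathcal F$ is the linear foliation induced by a constant vector field, giving the second alternative. The delicate case is the $K3$ surface: there $K_S=\mathcal O_S$ and $h^{1,0}=0$, so $H^0(S,\Omega^1_S)=0$ and $H^0(S,T_S)=0$, and $\mathcal F$ possesses neither a global defining one-form nor a global tangent field. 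Consequently the linear model that disposes of the torus case is not directly available, and I expect the $K3$ case to be the main obstacle.

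For that case the strategy is to recover a hidden torus by a Kummer-type covering. I would first use the extreme scarcity of curves on a surface with $a(S)=0$ (only finitely many, all rigid) to see that $\mathcal F$ can have only finitely many invariant curves, and then constrain its singularities by the Camacho--Sad and Baum--Bott index theorems together with $N_{\mathcal F}=T_{\mathcal F}^{-1}$; the aim is to produce a complex torus $A$ and a ramified covering $\Pi:A\to S$, resolving a $\pm1$-type action, through which a constant vector field on $A$ induces $\mathcal F$, thereby exhibiting $S$ as a Kummer surface. The genuinely hard input---where I would rely on the deeper results of McQuillan and Brunella---is the control of the foliation near its singularities and along its few invariant curves that is needed to build this covering, since on a $K3$ there is no global holomorphic tensor from which to manufacture it.
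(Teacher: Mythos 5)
The paper does not prove this statement at all: it is quoted verbatim as Brunella's classification of foliations on non-projective K\"ahler surfaces, with a citation to \cite[Section~10]{brunella-pisa}. So there is no internal proof to compare against; what can be judged is whether your sketch would stand on its own as a proof of the cited result.

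Your reduction is the right skeleton (K\"ahler plus non-projective forces $a(S)\in\{0,1\}$; the Enriques--Kodaira classification gives elliptic fibrations, tori and $K3$'s as the only minimal models), but there are genuine gaps. The decisive one is the $K3$ case: you correctly identify it as the hard point and then explicitly defer to ``the deeper results of McQuillan and Brunella'' to build the Kummer-type covering $\Pi:A\to S$. Since the statement being proved \emph{is} Brunella's theorem, this is circular; the construction of the torus covering from index-theoretic constraints is precisely the content that a proof must supply, and your sketch does not supply it. Two smaller points also need repair. In the torus case, triviality of $\Omega^1_T$ does not by itself show that $\mathcal F$ is cut out by a constant one-form: $\mathcal F$ is a section of $\Omega^1_T\otimes N_{\mathcal F}$, and one must first argue (using $a(S)=0$, e.g.\ the absence of curves and the structure of $\mathrm{Pic}$) that $N_{\mathcal F}$ is trivial before the linear model applies. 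In the $a(S)=1$ case, the dichotomy ``tangent to the fibres or transverse to the generic fibre'' requires the fact that on a surface of algebraic dimension one every irreducible curve lies in a fibre of the algebraic reduction, so that the tangency divisor between $\mathcal F$ and the fibration has no horizontal component; without this, $\mathcal F$ could a priori be tangent to the fibration along a multisection, and the generic fibre would fail to be everywhere transverse. (Note also that the paper's application excludes the case $\mathcal F=\pi$ only because the leaf under consideration is Zariski dense; the classification itself must list that case.)
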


Let us prove the Main Theorem in the nonalgebraic, K\"ahler setting by using this result. We use the notations of the statement of the Main Theorem, with~$S$ a non-K\"ahler surface (that we may suppose minimal), and  denote by~$\mathcal{F}$ the foliation induced by~$X$. Let us consider the possibilities given by the above result. In the first case  all the algebraic curves are tangent to the fibration and so are the of poles of~$X$, and thus the fibration given by the algebraic reduction is preserved. In the second case, the ratio between~$X$ and a constant vector field inducing~$\mathcal{F}$ is a meromorphic function on~$S$, which must be constant. The same argument applies in the third case to the ratio of the pull-back of~$X$ and of the constant vector field: $X$ is a holomorphic vector field on~$S$. 

This completes the proof of the Main Theorem.

\section{Final comments and examples}\label{sec:excom}

For Riccati equations, having a first integral implies that the monodromy group is finite. The reciprocal is not true. The following example, which does not have a single algebraic solution, is attributed to Wittich~\cite[section~4.1]{hille}. The Riccati differential equation
\[y'=y^2+\frac{t^3+2}{t(t^3-1)}y+\frac{(t^3-1)^2}{t^4}\]
has the transcendental solutions
\[y(t)=\left(t-\frac{1}{t^2}\right)\tan\left(\frac{t^2}{2}+\frac{1}{t}+c\right).\]
Since all the solutions are single-valued, the monodromy is trivial. However, the solutions have an essential singularity at~$t=0$ and are not algebraic (in particular, the equation cannot have a first integral).\\

The relation between the univalent maximal solutions and the fixed points of the monodromy representation in both Riccati and turbulent foliations is behind corollary~\ref{treshold}.

\begin{proof}[Proof of Corollary~\ref{treshold}] Let us go through the possibilities in the conclusion of the Main Theorem. If the vector field~$X$ is birationaly equivalent to a holomorphic vector field, all its maximal solutions are univalent. Let us address the Riccati case, the one where there is a rational fibration~$\Pi:S\to B$ preserved by~$X$. Let~$B_0\subset B$ be the image of the nonspecial fibers. Let~$\rho:\pi_1(B_0)\to\mathrm{PSL}(2,\mathbf{C})$ be the monodromy of the Riccati foliation. Let~$Y=\Pi_*X$ and let~$\mu:\pi_1(B_0)\to\mathbf{C}$ be the monodromy of the translation structure induced by~$Y|_{B_0}$. The univalent maximal solutions of~$X$ correspond to the global fixed points of the restriction of~$\rho$ to~$\mathrm{ker}(\mu)\subset\pi_1(B_0)$. If~$X$ has three univalent maximal solutions with different images, this representation has three fixed points, and  is thus trivial. Hence, all the points are fixed, and all the solutions are single-valued. We proceed similarly for the turbulent case: the univalent maximal solutions are the fixed points of a representation analogous to the one previously described, replacing the rational fibration by an elliptic one. The corollary follows from the fact that an automorphism of an elliptic curve having five fixed points is the identity. \end{proof}

The elliptic involution of an elliptic curve has four fixed points. We have the following example: 

\begin{example}\label{four} Consider, in~$\mathbf{C}\times \mathbf{C}$, the meromorphic vector field
	\begin{equation}\label{above}X=\frac{1}{2x}\del{x}-\frac{1}{x^3}\del{y}.\end{equation}
	It is invariant by translations in~$y$ and by the order two-mapping~$\sigma$
	\begin{equation}\label{exinvolution} (x,y)\stackrel{\sigma}{\rightarrow} (-x,-y). \end{equation}
	It has the first integral $y-2/x$. Each one of its level curves may be parametrized by~$x$ and is thus biholomorphic to~$\mathbf{C}^*$. In this global coordinate the restriction of the vector field is $(2x)^{-1}\indel{x}$. Let~$\Lambda\subset\mathbf{C}$ be a lattice. Since the vector field~(\ref{above}) is invariant by translations in~$y$, it induces a vector field on~$M=\mathbf{C}\times (\mathbf {C}/\Lambda)$, which is an elliptic fibration with respect to the projection onto the first factor. The foliation  induced by~$X$ is a turbulent one with respect to this fibration. The elliptic fibration is equivariant with respect to the involution induced by~(\ref{exinvolution}) on~$M$ and by~$x\mapsto -x$ on~$\mathbf{C}$. The quotient~$S$ of~$M$ under~$\sigma$ is, after resolution of the singularities produced by the four fixed points, an elliptic fibration~$\Pi:S\to \mathbf{C}$. It is still endowed with a vector field, that projects onto~$\indel{z}$ under the mapping~$(x,y)\mapsto x^2$. For the four values of~$c$ in~$\frac{1}{2}\Lambda$, the leaf~$y=c+2/x$ of~(\ref{above}) is fixed under the involution and the projection~$\Pi$ in restriction to this leaf is injective (and corresponds thus to a univalent maximal solution). In the other leaves the solution is multivalued and has two determinations. This produces a surface having a vector field with exactly four single-valued solutions.  The mapping $\Pi:S\to \mathbf{C}$ gives an elliptic fibration preserved by the meromorphic vector field, which induces a turbulent foliation. \end{example}

Some classical examples of algebraic differential equations on surfaces corresponding to the first possibility of the Main Theorem (giving holomorphic vector fields on surfaces) are given, for example, by the Chazy~VI equation $\phi'''= \phi\phi''+5(\phi')^2-\phi^2\phi'$ and the Chazy~IX equation
$\phi''' = 18(\phi'+\phi^2)(\phi'+3\phi^2)-6(\phi')^2$. They both have algebraic first integrals and their restriction to a generic level surface is birationally equivalent, respectively, to a linear vector field on~$\mathbf{P}^2$ or to a constant vector field on an Abelian surface (see~\cite{guillot-chazy} for details).

Let us end with an example related to the second possibility of the Main Theorem.

\begin{example}\label{ex:notriccati1} Consider the vector field~$X$ on~$\mathbf{C}\times\mathbf{P}^1$ given in~$\mathbf{C}\times\mathbf{C}$ by~$(z-w)\indel{z}$. It is invariant under the diagonal action of~$\mathrm{Aff}(\mathbf{C})$ on~$\mathbf{C}\times\mathbf{C}$. Let~$\Gamma\subset\mathrm{Aff}(\mathbf{C})$ be a crystallographic group. Let~$\nu$ be the smallest natural number such that~$\gamma^\nu$ is a translation for every~$\gamma\in\Gamma$. Let~$B$ be the quotient of~$\mathbf{C}$ under~$\Gamma$. The quotient of~$\mathbf{C}\times\mathbf{P}^1$ under the diagonal action of~$\Gamma$ is a surface~$S$ endowed with a rational fibration~$\Pi:S\to B$ with respect to which the foliation induced by~$X$ is a Riccati one. Notice that this fibration is not preserved by the flow of~$X$ on~$S$. However, there is an elliptic fibration~$\Xi:S\to\mathbf{P}^1$ induced by~$(z,w)\mapsto (z-w)^\nu$ that maps~$X$ to the vector field~$\nu\xi\indel{\xi}$. This elliptic fibration is preserved by the vector field. There are two special fibers: one dicritical and one invariant.
\end{example}


\providecommand{\bysame}{\leavevmode\hbox to3em{\hrulefill}\thinspace}
 \providecommand{\href}[2]{#2}

\end{document}